\DeclareMathOperator{\Animals}{\mathcal{A}nimals}
\DeclareMathOperator{\Bad}{Bad}
\newcommand{\Z}{\mathbb{Z}}
\newcommand{\Zd}{\mathbb{Z}^d}
\newcommand{\C}{\mathbb{C}}
\newcommand{\R}{\mathbb{R}}
\newcommand{\Rd}{\mathbb{R}^d}
\renewcommand{\P}{\mathbb{P}}
\newcommand{\Ed}{\mathbb{E}^d}
\newcommand{\eps}{\varepsilon}
\def\PP{\mathbb{P}}
\def\RRp{[0, +\infty)}
\def\EE{\mathbb{E}}
\def\NN{\mathbb{N}}
\def\RR{\mathbb{R}}
\def\ZZ{\mathbb{Z}}
\def\QQ{\mathbb{Q}}
\def\SS{\mathbb{S}}
\def\wT{\widetilde{T}}
\def\wx{\widetilde{x}}
\def\wkx{\widetilde{kx}}
\def\wnx{\widetilde{nx}}
\def\wy{\widetilde{y}}
\def\w0{\widetilde{0}}
\def\B{\mathcal{B}}
\def\C{\mathcal{C}}
\def\ind{{\mathbbm{1}}_}
\def\fG{\mathfrak{G}}
\def\fH{\mathfrak{H}}
\renewcommand{\epsilon}{\varepsilon}
\renewcommand{\phi}{\varphi}
\renewcommand{\limsup}{\overline{\lim}}
\renewcommand{\liminf}{\underline{\lim}}
\newcommand{\ie}{\emph{i.e. }}
\newcommand{\miniop}[3]{%
\renewcommand{\arraystretch}{0.6}
\begin{array}{c}
{\scriptstyle #1}\\
#2\\
{\scriptstyle #3}
\end{array}
\renewcommand{\arraystretch}{1}}
\newcommand{\Card}[1]{\vert #1 \vert}
\newcommand{\1}{1\hspace{-1.3mm}1}
\newcommand{\lestoch}{\preceq}
\newcommand{\gestoch}{\succeq}
\newtheorem{theorem}{Theorem}[section]
\newtheorem{lemme}[theorem]{Lemma}
\newtheorem{defi}[theorem]{Definition}
\newtheorem{coro}[theorem]{Corollary}
\newtheorem{prop}[theorem]{Proposition}
\newtheorem{rem}[theorem]{Remark}
\newcommand{\pr}{\mathbb{P}}
\newcommand{\bae}{\begin{equation}\begin{aligned}}
\newcommand{\eae}{\end{aligned}\end{equation}}
\newcommand{\BZ}{\mathbb{Z}}
\DeclareFontFamily{OML}{rsfs}{\skewchar\font'177}
\DeclareFontShape{OML}{rsfs}{m}{n}{ <5> <6> rsfs5 <7> <8> <9>
rsfs7 <10> <10.95> <12> <14.4> <17.28> <20.74> <24.88> rsfs10 }{}
\DeclareMathAlphabet{\mathfs}{OML}{rsfs}{m}{n}
\begin{document}

\title[Continuity of the time and isoperimetric constants]{Continuity of the time and isoperimetric constants in supercritical percolation}

{
\author[O. Garet]{Olivier Garet}
\address{Universit\'e de Lorraine, Institut \'Elie Cartan de Lorraine, UMR 7502, Vandoeuvre-l{\`e}s-Nancy, F-54506, France\\
\and \\
CNRS, Institut \'Elie Cartan de Lorraine, UMR 7502, Vandoeuvre-l{\`e}s-Nancy, F-54506, France\\
}
\email{Olivier.Garet@univ-lorraine.fr}
\author[R. Marchand]{R{\'e}gine Marchand}
\address{Universit\'e de Lorraine, Institut \'Elie Cartan de Lorraine, UMR 7502, Vandoeuvre-l{\`e}s-Nancy, F-54506, France\\
\and \\
CNRS, Institut \'Elie Cartan de Lorraine, UMR 7502, Vandoeuvre-l{\`e}s-Nancy, F-54506, France\\
}
\email{Regine.Marchand@univ-lorraine.fr}

\author[E.B. Procaccia]{Eviatar B. Procaccia}
\address{Texas A\&M University, Mailstop 3368, College Station TX 77843, USA\footnote{Research supported by NSF grant 1407558}}
\email{procaccia@math.tamu.edu}

\author[M. Th{\'e}ret]{Marie Th{\'e}ret}
\address{LPMA, Universit\'e Paris Diderot, 5 rue Thomas Mann, 75205 Paris Cedex 13, France}
\email{marie.theret@univ-paris-diderot.fr}
}

\def\motsclefs{continuity, first-passage percolation, time constant, isoperimetric constant.}

\subjclass[2010]{60K35, 82B43.}
\keywords{\motsclefs}

\begin{abstract}
We consider two different objects on supercritical Bernoulli percolation on the edges of $\ZZ^d$ : the time constant for i.i.d. first-passage percolation (for $d\geq 2$) and the isoperimetric constant (for $d=2$). We prove that both objects are continuous with respect to the law of the environment. More precisely we prove that the isoperimetric constant of supercritical percolation in $\BZ^2$ is continuous in the percolation parameter. As a corollary we obtain that normalized sets achieving the isoperimetric constant are continuous with respect to the Hausdroff metric. Concerning first-passage percolation, equivalently we consider the model of i.i.d. first-passage percolation on $\ZZ^d$ with possibly infinite passage times: we associate with each edge $e$ of the graph a passage time $t(e)$ taking values in $[0,+\infty]$, such that $\PP[t(e)<+\infty] >p_c(d)$. We prove the continuity of the time constant with respect to the law of the passage times. This extends the continuity property of the asymptotic shape previously proved by Cox and Kesten \cite{MR588407,MR633228,Kesten:StFlour} for first-passage percolation with finite passage times.
\end{abstract}

\maketitle


\section{Introduction}

We consider supercritical bond percolation on $\ZZ^d$, with parameter $p>p_c(d)$, the critical parameter for this percolation. Almost surely, there exists a unique infinite cluster $\C_{\infty}$ -- see for instance Grimmett's book \cite{grimmett-book}. We study the continuity properties of two distinct objects defined on this infinite cluster: the isoperimetric (or Cheeger) constant, and the asymptotic shape (or time constant) for an independent first-passage percolation. In this section, we introduce briefly the studied objects and state the corresponding results: more precise definitions will be given in the next section.


\subsection{Isoperimetric constant of the infinite cluster in dimension 2}

For a finite graph $\gimel=(V(\gimel),E(\gimel))$, the isoperimetric constant is defined as 
\[
\varphi_\gimel=\min\left\{\frac{|\partial A|}{|A|}:A\subset V(\gimel),0<|A|\le\frac{|V(\gimel)|}{2}\right\}
,\] where $\partial A$ is the edge boundary of $A$, $\partial A=\{e=(x,y)\in E(\gimel):x\in A,y\notin A, \text{ or }x\notin A,y\in A\}$, and  $|B|$ denotes the cardinal of the finite set $B$.

We consider the isoperimetric constant $\varphi_n(p)$ of $\C_{\infty} \cap [-n,n]^d$, the intersection of the infinite component of supercritical percolation of parameter $p$ with the box $[-n,n]^d$:
\[
\varphi_n(p)=\min\left\{\frac{|\partial A|}{|A|}:A\subset \C_{\infty} \cap [-n,n]^d,0<|A|\le\frac{|\C_{\infty} \cap [-n,n]^d|}{2}\right\}
,\]
 In several papers (e.g. \cite{benjamini2003mixing}, \cite{mathieu2004isoperimetry}, \cite{ECP1390},   \cite{berger2008anomalous}), it was shown that there exist constants $c,C>0$ such that $c< n\varphi_n(p)<C$, with probability tending rapidly to $1$. This led Benjamini to conjecture the existence of $ \lim_{n\rightarrow+\infty}n\varphi_n(p)$. In \cite{procaccia2011concentration},  Rosenthal and Procaccia proved that the variance of $n\varphi_n (p)$  is smaller than $Cn^{2-d}$, which implies $n  \varphi_n(p)$ is concentrated around its mean for $d\ge 3$. In \cite{biskup2012isoperimetry},  Biskup, Louidor, Procaccia and Rosenthal proved the existence of $ \lim_{n\rightarrow+\infty}n\varphi_n(p)$ for $d=2$. This constant is called the Cheeger constant. In addition, a  shape theorem was obtained: any set yielding the isoperimetric constant converges in the Hausdorff metric to the normalized Wulff shape $\widehat{W}_p$, with respect to a specific norm given in an implicit form, see Proposition \ref{theo:minimizer} below. For additional background and a wider introduction on Wulff construction in this context, the reader is referred to~\cite{biskup2012isoperimetry}. 
Our first result is the continuity of the Cheeger constant and of the Wulff shape in dimension $d=2$:
\begin{theorem}
\label{thm:isocnt}
For $d=2$, the applications
$$p\in(p_c(2),1] \mapsto \lim_{n\rightarrow+\infty}n\varphi_n (p) \quad  \quad \text{ and } \quad \quad p\in(p_c(2),1] \mapsto \widehat{W}_p$$
are continuous, the last one for the Hausdorff distance between non-empty compact sets of $\R^2$.
\end{theorem}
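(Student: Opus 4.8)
The plan is to build the continuity of $p\mapsto\lim_n n\varphi_n(p)$ and of $p\mapsto\widehat W_p$ simultaneously, going through the variational characterization of the Cheeger constant proved in \cite{biskup2012isoperimetry}. Recall that in that paper the limit $\lim_n n\varphi_n(p)$ is identified as a deterministic constant, call it $\varphi_{\mathrm{Ch}}(p)$, obtained as the solution of a continuous isoperimetric problem: there is a norm $\beta_p$ on $\R^2$ (a surface tension, built from the chemical-distance / Wulff functional of percolation at level $p$) and a density $\theta_p=\P_p[0\in\C_\infty]$, such that
\begin{equation}
\label{eq:varfo}
\varphi_{\mathrm{Ch}}(p)=\inf\left\{\frac{\mathcal I_{\beta_p}(U)}{\theta_p\,\Leb(U)}:U\subset\R^2\text{ open, bounded, }0<\Leb(U)\leq\tfrac12\right\},
\end{equation}
where $\mathcal I_{\beta_p}(U)=\int_{\partial^*U}\beta_p(n_x)\,d\mathcal H^1(x)$ is the $\beta_p$-perimeter, and $\widehat W_p$ is the (suitably normalized) minimizer, which is the Wulff shape of $\beta_p$ rescaled to have volume $\tfrac12$. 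So it suffices to prove: (i) $p\mapsto\theta_p$ is continuous on $(p_c,1]$ (classical, from \cite{grimmett-book}); (ii) $p\mapsto\beta_p$ is continuous, uniformly on $\SS^1$, \emph{i.e.} $\sup_{x\in\SS^1}|\beta_{p_k}(x)-\beta_p(x)|\to0$ whenever $p_k\to p$; and then (iii) transfer this to continuity of the infimum \eqref{eq:varfo} and of the Wulff minimizer.

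The heart of the matter is step (ii), the continuity of the surface tension $\beta_p$. Here I would exploit that $\beta_p$ coincides, up to a known constant, with a first-passage-type constant for the \emph{dual}/chemical-distance problem: $\beta_p(x)$ is the directional ``cost per unit length'' of a percolation interface in direction $x^\perp$, which can be written as a limit of expectations of minimal cutset sizes (or equivalently, via duality in $\Z^2$, as a time constant of an associated first-passage percolation with $0/\infty$ or $1/?$ passage times). This is precisely where I would like to invoke the first main theorem of this very paper — the continuity of the time constant for i.i.d. first-passage percolation with possibly infinite passage times, $\P[t(e)<\infty]>p_c$. The strategy: realize $\beta_p$ as (a constant times) $\mu_{F_p}$ for an explicit family of laws $F_p$ on $[0,+\infty]$ depending continuously on $p$ in the appropriate (weak-ish) sense with $F_p(\{<\infty\})=1-p$ or $p$ staying supercritical, apply the time-constant continuity theorem to get pointwise-in-$x$ continuity of $\beta_p$, and then upgrade to uniform continuity on $\SS^1$ using that each $\beta_p$ is a norm (convex, $1$-homogeneous): a sequence of norms on $\R^2$ converging pointwise on a dense set, with locally bounded ``Lipschitz constants'' (here uniform because $\beta_p$ is sandwiched, $c\|x\|\le\beta_p(x)\le C\|x\|$ uniformly for $p$ in a compact subinterval), converges uniformly on $\SS^1$. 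The main obstacle I anticipate is exactly this identification: matching the surface tension $\beta_p$ of \cite{biskup2012isoperimetry} with a genuine i.i.d.\ first-passage time constant to a law $F_p$ that is continuous in $p$, because the natural description of $\beta_p$ is through a cutset/interface object that is only \emph{dual} to first-passage percolation, and one must be careful that the duality and the infinite-cluster conditioning do not spoil the i.i.d.\ structure needed to apply the time-constant theorem. If a clean such identification is unavailable, the fallback is to redo, at the level of the continuous functional $\mathcal I_{\beta_p}$, the same coupling/comparison arguments used for the time constant: couple percolations at parameters $p$ and $p'$ on the same probability space, bound the difference of interface costs by the number of ``discrepancy'' edges crossed, and use a renormalization/Peierls estimate to control that number.

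For step (iii), given (i)–(ii), continuity of the infimum in \eqref{eq:varfo} is soft: $U\mapsto\mathcal I_{\beta_p}(U)$ is, for fixed $U$ of finite perimeter, continuous in $\beta_p$ uniformly over $U$ with a fixed perimeter bound (it is linear in $\beta_p$ and $|\mathcal I_{\beta_{p'}}(U)-\mathcal I_{\beta_p}(U)|\le\|\beta_{p'}-\beta_p\|_{\infty,\SS^1}\,\mathrm{Per}(U)$), so taking a near-optimal $U$ for $p$ gives an upper bound for $\varphi_{\mathrm{Ch}}(p')$ converging to $\varphi_{\mathrm{Ch}}(p)$, and symmetrically; the uniform sandwich $c\|\cdot\|\le\beta_p\le C\|\cdot\|$ keeps minimizing sequences in a compact class so the infimum is attained and the argument is legitimate. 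For the Wulff shape: the Wulff shape $W_{\beta}$ is an explicit $\SS^1$-indexed intersection of half-planes, $W_\beta=\bigcap_{n\in\SS^1}\{y:\langle y,n\rangle\le\beta(n)\}$, so uniform convergence $\beta_{p_k}\to\beta_p$ on $\SS^1$ gives Hausdorff convergence $W_{\beta_{p_k}}\to W_{\beta_p}$ directly (again using the uniform sandwich to keep the shapes bounded and bounded away from a point), and the volume normalization $U\mapsto\Leb(U)^{-1/2}U$ (to volume $\tfrac12$) is continuous for the Hausdorff metric on the class of convex bodies with volume bounded below, yielding continuity of $p\mapsto\widehat W_p$. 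The only genuinely delicate point in (iii) is knowing a priori that the normalized minimizer equals this normalized Wulff shape and is unique — but that is exactly the content of Proposition~\ref{theo:minimizer} of \cite{biskup2012isoperimetry} quoted above, so I may take it as given.
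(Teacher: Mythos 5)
Your reduction of the theorem to uniform continuity of $p\mapsto\beta_p$ on $\SS^1$, together with your step (iii) (transfer to the infimum in the variational formula and to the normalized Wulff shape), is essentially the paper's argument and is sound. The gap is in your primary route for step (ii). You propose to realize $\beta_p(x)$ as a first-passage time constant $\mu_{F_p}(x)$ for a family of laws $F_p$ on $[0,+\infty]$ and then invoke Theorem~\ref{thmcont}. This identification is not merely delicate, it is unavailable: $b_p(x,y)$ is an infimum of $\mathbf{b}_p(r)$, which counts the \emph{open right-boundary edges} of an open right-most path $r$, i.e.\ edges incident to $r$ but not belonging to it, and whether a given edge is a right-boundary edge depends on the local geometry of $r$, not on that edge alone. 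So $\mathbf{b}_p$ is not a sum of i.i.d.\ edge weights along a path, and no law $F_p$ makes $b_p$ a first-passage time. The paper makes no such identification; it proves continuity of $\beta_p$ directly (Lemma~\ref{lem:normcont}), reusing the coupling and renormalization technology of Section~\ref{secrenorm} that also drives Theorem~\ref{thmcont}, but adapted to the right-boundary count rather than imported as a black box.

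Your fallback is the right idea, but it glosses over a point you would hit immediately: $p\mapsto\beta_p$ is \emph{not} monotone (the paper stresses this after Lemma~\ref{propstab2}), because increasing $p$ to $q$ keeps a $p$-open path $q$-open but may increase its right-boundary count, while decreasing $q$ to $p$ may break the path. The two one-sided estimates therefore need different arguments. For $\beta_q\le\beta_p+O(\epsilon)$ one takes a near-optimal $p$-open right-most path, controls its length via Lemma~\ref{lem:rightint} and the chemical-distance estimate~\eqref{EQ:AP3}, and bounds the number of newly opened right-boundary edges by a Cram\'er estimate over the boundedly many paths of that length. For $\beta_p\le\beta_q+O(\epsilon)$ one must repair a $q$-open near-optimal path into a $p$-open one via the block-renormalization bypass construction of Lemma~\ref{LEM:modif}, controlled by the Peierls-type bound of Lemma~\ref{lem:new}; one also has to check that the repaired path remains right-most (Lemma~2.6 of \cite{biskup2012isoperimetry}) and again bound the right-boundary edges created by the bypasses. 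Working out these two halves is precisely the content of the paper's proof, so if you replace your primary route by a fully spelled-out version of your fallback you would recover it.
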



\subsection{First-passage percolation on the infinite cluster in dimension $d\ge 2$}

Consider a fixed dimension $d\ge 2$. First-passage percolation on $\ZZ^d$ was introduced by Hammersley and Welsh~\cite{HW} as a model for the spread of a fluid in a porous medium. To each edge of the $\Zd$ lattice is attached a nonnegative random variable $t(e)$ which corresponds to the travel time needed by the fluid to cross the edge. When the passage times are independent identically distributed variables with common distribution $G$, with suitable moment conditions, 
the time needed to travel from $0$ to $nx$ is equivalent to $n\mu_G(x)$, 
 where  $\mu_G$ is a semi-norm associated to $G$ called the time constant;  Cox and Durrett~\cite{MR624685} proved this result under necessary and sufficient integrability conditions on the distribution $G$ of the passage times. Kesten in \cite{kesten1986aspects} proved that the semi-norm $\mu_G$ is a norm if and only if $G(\{0\} )<p_c(d)$. In casual terms, the asymptotic shape theorem (in its geometric form) says that in this case, the random ball of radius $n$, \ie the set of points that can be reached within time $n$ from the origin, asymptotically looks like $n \B_{\mu_G}$, where $\B_{\mu_G}$ is the unit ball for the norm  $\mu_G$. The ball $\B_{\mu_G}$ is thus called the asymptotic shape associated to $G$.

A natural extension is to replace the $\Zd$ lattice by a random environment given by the infinite cluster $\C_{\infty}$ of a supercritical Bernoulli percolation model. This is equivalent to allow $t(e)$ to be equal to $+\infty$.
The existence of a time constant in first-passage percolation in this setting was first proved by Garet and Marchand in \cite{GaretMarchand04}, in the case where $(t(e)\1_{t(e)<+\infty})$ is a stationary integrable ergodic field.
Recently, Cerf and Th\'eret~\cite{CerfTheret14} focused of the case where 
$(t(e)\1_{t(e)<+\infty})$ is an independent field, and managed to prove the existence of an appropriate time constant without any integrability assumption.
In the following, we adopt the settings of Cerf and Th\'eret: the passage times are independent random variables with common distribution $G$ taking its values in $[0,+\infty]$ such that $G([0,+\infty))>p_c(d)$, and we denote by $\mu_G$ the corresponding time constant. 

Our second result is the continuity of the time constants $\mu_G(x)$ with respect to the distribution $G$ of the passage times, uniformly in the direction.
More precisely, let $(G_n)_{n\in \NN}$ and $G$ be probability measures on $[0,+\infty]$.
We say that $G_n$ converges weakly towards $G$ when $n$ goes to infinity, and we write $G_n \overset{d}{\rightarrow} G$, if for any continuous bounded function $f: [0,+\infty] \mapsto \RRp$ we have
$$ \lim_{n \rightarrow +\infty} \int_{[0,+\infty]} f \, dG_n \,=\,  \int_{[0,+\infty]} f \, dG \,. $$
Equivalently, $G_n \overset{d}{\rightarrow} G$ if and only if $\lim_{n \rightarrow \infty} G_n([t,+\infty]) = G([t,+\infty])$ for all $t\in \RRp$ such that $t\mapsto G([t,+\infty])$ is continuous at $t$. 
Let $\SS^{d-1} = \{ x\in \RR ^d \,:\, \|x\|_2 = 1 \}$. 
\begin{theorem}
\label{thmcont}
Let $G, (G_n)_{n\in \NN}$ be probability measures on $[0,+\infty]$ such that for every $n\in \NN$, $G_n(\RRp) > p_c(d)$ and $G(\RRp) > p_c(d)$ . If $G_n \overset{d}{\rightarrow} G$, then
$$ \lim_{n\rightarrow +\infty} \sup_{x \in \SS^{d-1}} | \mu_{G_n}(x) - \mu_G(x) | \,=\, 0 \,. $$
\end{theorem}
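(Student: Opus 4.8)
The plan is to prove Theorem~\ref{thmcont} via a two-sided inequality, controlling $\limsup$ and $\liminf$ of $\mu_{G_n}$ separately, and to leverage a coupling that makes passage times monotone in the distribution. The natural tool is to realize all the passage times on a common probability space via the quantile (inverse distribution function) transform: for a single uniform random variable $U\in[0,1]$ on each edge, set $t_G(e) = G^{-1}(U_e)$ where $G^{-1}(u) = \inf\{t\in[0,+\infty]: G([0,t]) \geq u\}$. Then weak convergence $G_n \overset{d}{\to} G$ implies $G_n^{-1}(u) \to G^{-1}(u)$ at every continuity point $u$ of $G^{-1}$, hence almost everywhere, so $t_{G_n}(e) \to t_G(e)$ almost surely for each edge. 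The subtlety, and the whole reason the theorem is nontrivial, is that this pointwise convergence does \emph{not} pass to the limit through the infimum defining $\mu_G$ without uniform control, because passage times may be infinite and the relevant open-cluster structure of $\{t(e)<+\infty\}$ can change discontinuously; the condition $G_n(\RRp), G(\RRp) > p_c(d)$ keeps all these percolation configurations supercritical, which is what saves us.

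For the \textbf{upper bound} $\limsup_n \sup_{x\in\SS^{d-1}} \mu_{G_n}(x) \leq \mu_G(x)$, I would fix a large scale $N$ and an almost-optimal path $\gamma$ from $0$ to $\lfloor Nx\rfloor$ in the $G$-environment realizing (up to $\eps$) the value $N\mu_G(x)$; I want to show the same path is usable, with comparable cost, in the $G_n$-environment for $n$ large. The danger is that $\gamma$ uses an edge $e$ with $t_{G_n}(e)=+\infty$ while $t_G(e)<+\infty$. To handle this I would truncate: first replace $G$ by $G\wedge M$ (passage times capped at $M$), using a result or elementary estimate that $\mu_{G\wedge M}(x) \uparrow \mu_G(x)$ as $M\to\infty$ when $G(\RRp)>p_c(d)$ --- this is essentially contained in the Cerf--Th\'eret framework cited. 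For the capped distribution the coupling gives $t_{G_n \wedge M}(e) \to t_{G\wedge M}(e)$ uniformly bounded by $M$, and here a standard subadditivity/averaging argument (decompose $\lfloor Nx\rfloor$-geodesics, use the ergodic theorem along a fixed skeleton of boxes, and dominated convergence since everything is bounded by $M$) yields $\mu_{G_n\wedge M}(x) \to \mu_{G\wedge M}(x)$; then since $G_n \wedge M \gstoch$-dominates... one lets $M\to\infty$. The uniformity in $x\in\SS^{d-1}$ comes for free because $\mu_G$ is a seminorm, hence $1$-Lipschitz in $x$ for an appropriate equivalent norm, so pointwise convergence of the (equicontinuous) family $x\mapsto\mu_{G_n}(x)$ on the compact sphere upgrades automatically to uniform convergence by a Dini-type / Arzel\`a--Ascoli argument.

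For the \textbf{lower bound} $\liminf_n \mu_{G_n}(x) \geq \mu_G(x)$, I would argue by contradiction or directly: take near-geodesics $\gamma_n$ in the $G_n$-environment from $0$ to $\lfloor Nx\rfloor$ of cost close to $N\mu_{G_n}(x)$, and I want to build from them a competitor path in the $G$-environment. The difficulty is the reverse of before: an edge cheap under $G_n$ (small $t_{G_n}(e)$) might have $t_G(e)$ large or infinite. Here I would use the percolation structure: since $G(\RRp) > p_c(d)$, the set of edges with $t_G(e)<+\infty$ percolates, and one can modify $\gamma_n$ by short detours through the infinite cluster of finite-$G$ edges --- the cost of these detours is controlled using that the chemical distance in supercritical percolation is comparable to the Euclidean distance (the Antal--Pisztora / Garet--Marchand estimates), and the number and length of detours is $o(N)$ with high probability, so the extra cost is negligible. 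Combining with the pointwise a.s.\ convergence of passage times on the (finitely many, bounded) edges that matter gives the bound.

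\textbf{The main obstacle} I expect is precisely the handling of infinite passage times in the lower bound: controlling the ``repair cost'' of rerouting a $G_n$-geodesic around edges that are finite under $G_n$ but infinite (or very large) under $G$, and showing this repair cost is $o(N)$ uniformly. This requires a quantitative renormalization argument on the percolation of $\{t(e)<+\infty\}$ --- one wants that, on a suitable coarse-grained lattice, ``good'' boxes (those where the local infinite cluster of $G$-finite edges is well-connected and crossing costs are controlled) percolate with high density, uniformly over $G_n$ for $n$ large, which is where the openness of the condition $G_n(\RRp) > p_c(d)$ together with $G_n(\RRp) \to G(\RRp)$ is used. I would isolate this as the key lemma and expect most of the work to go there; the remaining steps (quantile coupling, truncation, seminorm Lipschitz continuity giving uniformity over $\SS^{d-1}$) are comparatively routine.
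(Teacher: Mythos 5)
Your high-level plan (two-sided bound, quantile coupling, truncation, renormalization, upgrade to uniformity over $\SS^{d-1}$ by Lipschitz continuity of the seminorm) matches the paper's decomposition, but you have misassigned the ingredients to the two directions, and as written the ``upper bound'' paragraph does not prove an upper bound.

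Here is the difficulty. You propose, for $\limsup_n \mu_{G_n}(x)\le \mu_G(x)$, to truncate to $G\wedge M$ and $G_n\wedge M$, use the compact-support case to get $\mu_{G_n\wedge M}(x)\to\mu_{G\wedge M}(x)$, and then let $M\to\infty$. But since $G_n\wedge M\lestoch G_n$ one only has $\mu_{G_n\wedge M}\le\mu_{G_n}$, so this chain yields $\liminf_n\mu_{G_n}(x)\ge\mu_{G\wedge M}(x)\to\mu_G(x)$: a \emph{lower} bound, not an upper bound. There is no inequality in the other direction that lets you recover $\mu_{G_n}$ from its truncation, so as written the step cannot close. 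The paper's actual upper bound (Lemma~\ref{propetape1}) needs no truncation at all: squeeze $G_n\lestoch\overline G_n$ with $\overline\fG_n=\max(\fG,\fG_n)\gestoch G$; for a fixed near-optimal $G$-path $r$ (which has finitely many edges, each a.s.\ $G_n$-finite for $n$ large by the quantile coupling), $T_{\overline G_n}(r)\to T_G(r)$ a.s., and one controls the expectation of $\wT^{\C_{H^+,M}}_{\overline G_n}(0,kx)$ by a single dominating measure $H^+\gestoch\overline G_n$ with $H^+(\RRp)>p_c(d)$ (Lemma~\ref{lemrepart}), so dominated convergence plus the formula $\mu=\inf_k\EE[\wT(0,kx)]/k$ gives $\limsup_n\mu_{\overline G_n}\le\mu_G$. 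This is the easy direction precisely because monotone coupling makes a $G$-finite path automatically $\overline G_n$-finite for $n$ large.

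Your ``lower bound'' paragraph is conceptually correct --- this is the direction that needs the renormalization --- but the paper organizes it differently, and more economically, than a direct uniform-in-$n$ repair of $G_n$-geodesics would. It squeezes $\underline G_n\lestoch G_n$, truncates both to $[0,K]$, and splits the work into two lemmas: a compact-support continuity statement (Lemma~\ref{propetape3}, proved by quantile coupling, a Peierls/Cram\'er estimate, and Borel--Cantelli, with no renormalization) giving $\mu_{\underline G_n^K}\to\mu_{G^K}$, and a truncation limit $\mu_{G^K}\to\mu_G$ as $K\to\infty$ (Theorem~\ref{propetape2}), which is where the coarse-graining and path-repair argument lives and is a comparison between the \emph{single} measure $G$ and its truncation $G^K$. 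Pushing all the renormalization into a statement involving only $G$ avoids the uniformity-in-$G_n$ headache you flag; the $\delta_0(q)$ and $\delta_1(p_0)$ of Lemma~\ref{couplage} already provide the only uniformity needed (over a small parameter window, not over the whole sequence). You should reorganize along these lines: take the truncation step out of the upper bound (where it serves no purpose) and use it, together with a separately proved compact case, to factor the lower bound.
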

This result extends the continuity of the time constant in classical first-passage percolation proved by Cox and Kesten \cite{MR588407,MR633228,Kesten:StFlour}  to first-passage percolation with possibly infinite passage times. As in the classical case, the semi-norm $\mu_G$ is a norm if and only if $G(\{0\})<p_c(d)$ (see proposition \ref{thmpropmu} below). In that case, we denote by $\B_{\mu_{G}}$ its unit ball and call it the asymptotic shape associated to $G$.
We can quite easily deduce from Theorem \ref{thmcont} the following continuity of the asymptotic shapes when they exist: 
\begin{coro}
Let $G, (G_n)_{n\in \NN}$ be probability measures on $[0,+\infty]$ such that for every $n\in \NN$, $G_n(\RRp) > p_c(d)$, $G(\RRp) > p_c(d)$ and $G(\{0\}) <p_c(d)$. If $G_n \overset{d}{\rightarrow} G$, then
$$ \lim_{n\rightarrow +\infty}  d_H (\B_{\mu_{G_n}}, \B_{\mu_{G}}) \,=\, 0 \,, $$
where $d_H$ is the Hausdorff distance between non-empty compact sets of $\Rd$.
\end{coro}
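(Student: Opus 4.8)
The plan is to deduce this directly from the uniform convergence $\sup_{x\in\SS^{d-1}}|\mu_{G_n}(x)-\mu_G(x)|\to 0$ provided by Theorem~\ref{thmcont}, together with the fact that $\mu_G$ is a genuine norm. First I would check that $\B_{\mu_{G_n}}$ is actually a non-empty compact set for $n$ large enough: applying the Portmanteau theorem to the closed set $\{0\}$ and the weak convergence $G_n\overset{d}{\rightarrow}G$ gives $\limsup_n G_n(\{0\})\le G(\{0\})<p_c(d)$, so $G_n(\{0\})<p_c(d)$ for all $n$ large, and then Proposition~\ref{thmpropmu} tells us $\mu_{G_n}$ is a norm on $\R^d$ and $\B_{\mu_{G_n}}$ is a compact convex body containing a neighbourhood of the origin. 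Since the statement is asymptotic, discarding the finitely many remaining indices is harmless.

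Next, since $\mu_G$ is a norm on $\R^d$, there are constants $0<c\le C<+\infty$ with $c\|x\|_2\le\mu_G(x)\le C\|x\|_2$ for all $x\in\R^d$. Set $\delta_n:=\sup_{x\in\SS^{d-1}}|\mu_{G_n}(x)-\mu_G(x)|$, so $\delta_n\to 0$ by Theorem~\ref{thmcont}. For $x\in\SS^{d-1}$ one has $|\mu_{G_n}(x)-\mu_G(x)|\le\delta_n\le(\delta_n/c)\,\mu_G(x)$, and by homogeneity of the two semi-norms this extends to
\[
\Big(1-\tfrac{\delta_n}{c}\Big)\mu_G(x)\;\le\;\mu_{G_n}(x)\;\le\;\Big(1+\tfrac{\delta_n}{c}\Big)\mu_G(x)\qquad\text{for all }x\in\R^d.
\]
For $n$ large enough that $\delta_n/c<1$, these inequalities translate into the inclusions
\[
\frac{1}{1+\delta_n/c}\,\B_{\mu_G}\;\subseteq\;\B_{\mu_{G_n}}\;\subseteq\;\frac{1}{1-\delta_n/c}\,\B_{\mu_G}.
\]

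Finally I would conclude with a soft estimate on Hausdorff distances. Let $R:=\sup_{x\in\B_{\mu_G}}\|x\|_2\le 1/c<+\infty$; then $d_H(\lambda\B_{\mu_G},\B_{\mu_G})\le|\lambda-1|\,R$ for every $\lambda>0$, and whenever $A\subseteq K\subseteq C$ with also $A\subseteq\B_{\mu_G}\subseteq C$ one has $d_H(K,\B_{\mu_G})\le\max\big(d_H(A,\B_{\mu_G}),d_H(C,\B_{\mu_G})\big)$. Applying this with $K=\B_{\mu_{G_n}}$, $A=(1+\delta_n/c)^{-1}\B_{\mu_G}$ and $C=(1-\delta_n/c)^{-1}\B_{\mu_G}$ yields
\[
d_H(\B_{\mu_{G_n}},\B_{\mu_G})\;\le\;R\,\max\!\Big(\Big|\tfrac{1}{1+\delta_n/c}-1\Big|,\Big|\tfrac{1}{1-\delta_n/c}-1\Big|\Big)\;\xrightarrow[n\to\infty]{}\;0.
\]
There is no serious obstacle here: the whole content sits in Theorem~\ref{thmcont}, and the only point requiring a little care is ensuring that $\B_{\mu_{G_n}}$ is compact (equivalently, that $\mu_{G_n}$ is a norm and not merely a semi-norm) for $n$ large, which is exactly why the Portmanteau step is carried out first.
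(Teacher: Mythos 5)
Your proof is correct and is precisely the argument the paper has in mind when it says the corollary is ``quite easily'' deduced from Theorem~\ref{thmcont}; the same sandwich-by-dilations argument appears verbatim in the paper's proof of the Wulff-shape continuity in Section~\ref{seccheeger} (there applied to the dual norm $\beta_p^*$, whose unit ball is $W_p$). One point is worth flagging as a genuine improvement on the paper's exposition: the corollary's hypotheses do not include $G_n(\{0\})<p_c(d)$, so it is not immediate from Proposition~\ref{thmpropmu} that $\mu_{G_n}$ is a norm and $\B_{\mu_{G_n}}$ a compact set; your Portmanteau step $\limsup_n G_n(\{0\})\le G(\{0\})<p_c(d)$ is exactly what is needed to make the statement well-posed for $n$ large, and the paper leaves this implicit.
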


Particularly, when $G_p=p\delta_1+(1-p)\delta_{+\infty}$, the norm $\mu_{G_p}$ governs the asymptotic distance in the infinite cluster of a supercritical Bernoulli percolation (see \cite{GaretMarchand04,GaretMarchand07,GaretMarchand10}). We get the following corollary:

\begin{coro}
For $p>p_c(d)$, let us denote by $\mathcal{B}_p$ the unit ball for the norm that is associated to the cheminal distance in supercritical bond percolation with parameter $p$.
Then,
$$p \in (p_c(d), 1] \mapsto \mathcal{B}_p$$ 
is continuous for  the Hausdorff distance between non-empty compact sets of $\Rd$.
\end{coro}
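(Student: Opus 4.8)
The plan is to reduce the statement to the preceding corollary on the continuity of the asymptotic shapes by identifying $\mathcal{B}_p$ with the asymptotic shape $\B_{\mu_{G_p}}$ for the specific one-parameter family $G_p = p\delta_1 + (1-p)\delta_{+\infty}$, and then checking that this family satisfies the hypotheses of that corollary. As recalled in the paragraph preceding the statement (see \cite{GaretMarchand04,GaretMarchand07,GaretMarchand10}), the cheminal distance in the infinite cluster of supercritical Bernoulli percolation of parameter $p$ is governed by the norm $\mu_{G_p}$, so that $\mathcal{B}_p = \B_{\mu_{G_p}}$ by the very definition of $\mathcal{B}_p$.

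First I would fix $p \in (p_c(d),1]$ and a sequence $p_n \to p$ in $(p_c(d),1]$; since the domain is a subspace of $\RR$, it suffices to establish sequential continuity. The hypotheses $G_{p_n}(\RRp) = p_n > p_c(d)$, $G_p(\RRp) = p > p_c(d)$ and $G_p(\{0\}) = 0 < p_c(d)$ are immediate. It then remains to check that $G_{p_n} \overset{d}{\rightarrow} G_p$. For this I would invoke the characterization recalled in the introduction: $G_{p_n} \overset{d}{\rightarrow} G_p$ if and only if $G_{p_n}([t,+\infty]) \to G_p([t,+\infty])$ at every $t\in\RRp$ where $t \mapsto G_p([t,+\infty])$ is continuous. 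Here $G_p([t,+\infty])$ equals $1$ for $t\in[0,1]$ and $1-p$ for $t>1$, so its only discontinuity point is $t=1$; for $t\in[0,1)$ one has $G_{p_n}([t,+\infty]) = 1 = G_p([t,+\infty])$, and for $t>1$ one has $G_{p_n}([t,+\infty]) = 1-p_n \to 1-p = G_p([t,+\infty])$. Hence the weak convergence holds.

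With these verifications in hand, the previous corollary applies and gives $d_H(\B_{\mu_{G_{p_n}}}, \B_{\mu_{G_p}}) \to 0$, that is $d_H(\mathcal{B}_{p_n}, \mathcal{B}_p) \to 0$, which is the asserted continuity. I do not expect any genuine obstacle: all the substance is contained in Theorem \ref{thmcont} and the corollary deduced from it, and the only point requiring a little care is to compute correctly the tail function of $G_p$ and to observe that its single jump at $t=1$ is irrelevant for the weak-convergence criterion. Alternatively, one may simply note that $p\mapsto G_p$ is weakly continuous directly, since for every bounded continuous $f$ on $[0,+\infty]$ the integral $\int f\,dG_p = p\,f(1) + (1-p)\,f(+\infty)$ depends affinely, hence continuously, on $p$.
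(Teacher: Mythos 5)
Your proposal is correct and follows exactly the route implicit in the paper: the authors state this corollary immediately after the previous one, having just remarked that $\mu_{G_p}$ with $G_p=p\delta_1+(1-p)\delta_{+\infty}$ is the norm governing the chemical distance, so the intended proof is precisely the reduction you carry out — verify $G_{p_n}(\RRp)=p_n>p_c(d)$, $G_p(\{0\})=0<p_c(d)$, and $G_{p_n}\overset{d}{\rightarrow}G_p$ (your tail-function computation and the alternative affine-in-$p$ observation both do the job), then invoke the preceding corollary.
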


As a key step of the proof of Theorem~\ref{thmcont}, we study the effect of truncations of the passage time on the time constant. Let $G$ be a probability measure on $[0, +\infty]$ such that $G(\RRp) >p_c(d)$.
For every $K>0$, we set
$$G^K  \,=\, \ind{[0,K)} G + G([K, +\infty]) \delta_{K} \,,$$
i.e., $G^K$ is the law of the truncated passage time $t_G^K(e) = \min (t_G(e), K)$. We have the following control on the effect of these truncations on the time constants:

\begin{theorem}
\label{propetape2}
Let $G$ be a probability measure on $[0, +\infty]$ such that $G(\RRp) >p_c(d)$. Then
$$ \forall x\in \ZZ^d \quad \lim_{K\rightarrow \infty}  \mu_{G^K}(x) \,=\,  \mu_G (x)\,.$$
\end{theorem}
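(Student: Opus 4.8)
The plan is to prove monotone convergence of $\mu_{G^K}(x)$ to $\mu_G(x)$ by separately establishing the upper and lower bounds. The easy direction is $\limsup_{K\to\infty}\mu_{G^K}(x)\le \mu_G(x)$: since $t_G^K(e)\le t_G(e)$ for every edge and every $K$, any path realizing (approximately) the passage time for the non-truncated environment gives a competitor for the truncated one, so $\mu_{G^K}(x)\le \mu_G(x)$ for all $K$ (and $K\mapsto \mu_{G^K}(x)$ is nondecreasing). Note also that $G^K(\RRp)=G(\RRp)>p_c(d)$ for all $K$, so $\mu_{G^K}$ is well defined by the construction of Cerf and Th\'eret; and $G^K\overset{d}{\to}G$ as $K\to\infty$, so if Theorem \ref{thmcont} had already been proved we would be done — but since this statement is a \emph{key step toward} Theorem \ref{thmcont}, the point is to give a direct argument.

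For the lower bound $\liminf_{K\to\infty}\mu_{G^K}(x)\ge \mu_G(x)$, fix $\eps>0$ and $x\in\ZZ^d$. First I would reduce to the case $G(\RRp)=1$, i.e. finite (but possibly unbounded) passage times: indeed $\mu_G(x)$ is finite (it is a semi-norm, and finite because $G(\RRp)>p_c(d)$ guarantees $0$ and $nx$ lie in the infinite cluster of $\{t<\infty\}$ with good probability and are then joined by a path of finite length), and one checks from the definition of the time constant that $\mu_G$ depends on $G$ only through the conditional law of $t(e)$ given $t(e)<\infty$ together with the single number $G(\RRp)$; replacing $+\infty$-valued edges by a large finite value does not decrease the time constant and, by the first paragraph applied with this finite-valued measure, does not change the relevant truncation limit. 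Having reduced to $G(\RRp)=1$, I would use the subadditive/geodesic description: for large $n$, with high probability there is a path $\gamma_n$ from $0$ to $nx$ in the $G^K$-environment with $\sum_{e\in\gamma_n} t_G^K(e)\le n(\mu_{G^K}(x)+\eps)$. The same path $\gamma_n$ has $G$-passage time $\sum_{e\in\gamma_n} t_G(e)\ge n\mu_G(x) - o(n)$ (by definition of $\mu_G$ applied to \emph{all} paths from $0$ to $nx$). Hence the deficit is
\[
n\mu_G(x) - n(\mu_{G^K}(x)+\eps) \;\le\; \sum_{e\in\gamma_n}\bigl(t_G(e)-t_G^K(e)\bigr) \;=\; \sum_{e\in\gamma_n}\bigl(t_G(e)-K\bigr)^+ .
\]
So everything comes down to controlling $\sum_{e\in\gamma_n}(t_G(e)-K)^+$, and this is the main obstacle.

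To control that sum I would borrow the Cox–Kesten renormalization idea (\cite{MR588407,MR633228,Kesten:StFlour}), adapted to the percolation setting as in Cerf–Th\'eret: the geodesic $\gamma_n$ cannot linger on edges with very large passage time, because it could instead detour around any such ``bad'' region through a bounded-length path in the infinite cluster of edges with $t(e)<K$ (which is a supercritical percolation cluster once $K$ is large enough that $G([0,K))>p_c(d)$). Concretely, I would tile $\ZZ^d$ into boxes of a fixed mesoscopic scale, call a box \emph{good} if within a slightly enlarged box every pair of cluster-points is joined by a $\{t<K\}$-path of length at most a constant times the box size, and show via a Peierls / stochastic-domination argument (subcritical dependence on $K$) that bad boxes are sparse. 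A geodesic in the $G^K$-environment restricted to good boxes pays $O(1)$ per box; modifying it to reroute through good boxes around each maximal bad cluster changes its $G^K$-length by at most $C$ times the perimeter of that cluster, which is negligible. This shows $\gamma_n$ may be taken to avoid all edges with $t_G(e)\ge K$ up to an $o(n)$-error, after which $\sum_{e\in\gamma_n}(t_G(e)-K)^+ = o(n)$ almost surely and in $L^1$ (using $G(\RRp)=1$ only to guarantee such reroutings exist; the tail of $G$ near $+\infty$ is handled by the finiteness reduction). Passing to the limit gives $\mu_G(x)\le \mu_{G^K}(x)+\eps + o(1)$ as $K\to\infty$, and then $\eps\downarrow 0$ finishes the proof. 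The delicate point throughout is making the rerouting quantitative \emph{uniformly in $K$ large} and controlling the (possibly heavy) contribution of the finitely many remaining very-large-time edges, which is exactly where the truncation and the reduction to $G(\RRp)=1$ are used in tandem.
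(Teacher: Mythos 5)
Your overall outline — bound $\mu_G(x)-\mu_{G^K}(x)$ by a deficit of the form $\frac1n\sum_{e\in\gamma_n}(t_G(e)-K)^+$ along an (approximate) $G^K$-geodesic, and control this deficit by a renormalized rerouting argument — is, in spirit, what the paper does. But there are two gaps, one of which is fatal.

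First, the reduction to $G(\RRp)=1$ is circular. If you replace the mass at $+\infty$ by mass at a large finite $L$ (equivalently, pass to $G^L$), you obtain a measure $G_L$ with $(G_L)^K=G^K$ for $K\le L$. So applying the theorem to $G_L$ only tells you $\lim_{K\to\infty}\mu_{(G_L)^K}=\mu_{G_L}$, a statement about the \emph{finite} measure $G_L$; to conclude anything about $\mu_G$ you would still need $\mu_{G_L}\to\mu_G$ as $L\to\infty$, which is precisely the theorem you are trying to prove. Note also the sign error: by Lemma \ref{propmon}, $G_L\lestoch G$ gives $\mu_{G_L}\le\mu_G$ — replacing $\infty$ by a large finite value does \emph{not} preserve (and in general decreases) the time constant.

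Second, and more fundamentally, your renormalization uses the wrong percolation for the detours. You propose to call a box good when cluster points inside it are joined by a short $\{t<K\}$-open path and to reroute $\gamma_n$ through such paths. But an edge with $t(e)<K$ can still have passage time of order $K$, so a detour of length $O(N)$ built from $\{t<K\}$-open edges costs $O(KN)$ in $G$-time. The total detour cost is then \emph{not} $o(n)$ uniformly in large $K$; in fact, the $G^K$-geodesic has no reason to avoid an edge with $t_G(e)=+\infty$, since it costs only $K$ in the $G^K$-metric while a detour of length $\ge 2$ through $\{t<K\}$-edges can cost more. This is exactly where the paper differs: it fixes once and for all an $M_0$ with $p_0:=G([0,M_0])>p_c(d)$ and reroutes through the $p_0$-percolation $\{t\le M_0\}$, so every detour edge costs at most $M_0$, \emph{uniformly in} $K\ge M_0$. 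The renormalization is genuinely three-parameter (Definition \ref{DEF:goodbox}): a box is good if (a) a unique $p_0$-crossing cluster exists and has short chemical distance (cheap bypasses), and (b) every long $q$-open path in the enlarged box meets that cluster (so the $G^K$-geodesic $\gamma$, which is $q$-open with $q=G([0,\infty))$, can be hooked into the bypass). Neither condition depends on $K$. The only $K$-dependence is in which edges of $\gamma$ are bad (those with $t_G(e)>K$), and since each such edge contributes $K$ to $T_{G^K}(\gamma)$, their number is at most $T_{G^K}(\gamma)/K$; this yields the multiplicative factor $1+\rho'_dN^d/K\to 1$ in Lemma \ref{LEM:modifdeux} and the $1+A(\eps)/K$ in Lemma \ref{LEMetape2prime}, while the additive error $\rho'_dN\sum_{C}|C|$ over bad clusters is controlled by the $K$-independent Lemmas \ref{couplage} and \ref{lem:new}. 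Without fixing the $p_0$-percolation for the detours, the uniformity in $K$ that you flag as ``the delicate point'' cannot be obtained, and the argument does not close.
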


As a consequence of these results, we can approximate the time constants for the chemical distance in supercritical percolation on $\ZZ^d$ by the time constants for some finite passage times:
\begin{coro}
Let $p>p_c(d)$, and consider $G = p \delta_1 + (1-p) \delta_{+\infty}$.  Then $G^K = p \delta_1 + (1-p) \delta_{K}$ for all $K\geq 1$ and
$$  \forall x\in \ZZ^d\quad  \lim_{K\rightarrow \infty} \mu_{G^K} (x) \,=\, \mu_G (x) \,. $$
\end{coro}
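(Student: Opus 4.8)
The plan is to obtain this statement as a direct specialization of Theorem~\ref{propetape2}; the only things to check are that its standing hypothesis is met and that the truncated law $G^K$ has the announced explicit form. First I would compute $G^K$ from the definition $G^K = \ind{[0,K)} G + G([K,+\infty])\delta_K$ applied to $G = p\delta_1 + (1-p)\delta_{+\infty}$. For $K>1$ the atom at $1$ lies in $[0,K)$ while the atom at $+\infty$ does not, so $\ind{[0,K)} G = p\delta_1$ and $G([K,+\infty]) = G(\{+\infty\}) = 1-p$, whence $G^K = p\delta_1 + (1-p)\delta_K$. For $K=1$ one gets $G^1 = \delta_1 = p\delta_1+(1-p)\delta_1$, so the formula still holds; in any case this boundary value is irrelevant since we let $K\to\infty$.

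Next I would verify the hypothesis of Theorem~\ref{propetape2}: here $G(\RRp) = G([0,+\infty)) = p > p_c(d)$ by assumption, so $\mu_G$ is well defined and the theorem applies verbatim. Its conclusion is precisely $\lim_{K\to\infty}\mu_{G^K}(x) = \mu_G(x)$ for every $x\in\ZZ^d$, which is the claim.

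Since the corollary is a pure specialization, there is no genuine obstacle beyond Theorem~\ref{propetape2} itself: the substantive work — controlling the effect of the truncation on the time constant, and in particular ruling out that the quantities $\mu_{G^K}(x)$ converge to a value strictly below $\mu_G(x)$ — is already packaged there. One may note in passing that $\mu_{G^K}(x)$ is nondecreasing in $K$ and dominated by $\mu_G(x)$, because $t^K(e)=\min(t(e),K)$ increases to $t(e)$ as $K\to\infty$; hence the existence of the limit is elementary, and only its identification with $\mu_G(x)$ relies on Theorem~\ref{propetape2}.
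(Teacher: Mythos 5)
Your proposal is correct and takes the same route the paper intends: the corollary is indeed a direct specialization of Theorem~\ref{propetape2}, and your verification that $G(\RRp)=p>p_c(d)$ and that $G^K=p\delta_1+(1-p)\delta_K$ (including the boundary case $K=1$, where both atoms coincide) is exactly what is needed. The monotonicity remark is a nice extra but not required.
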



\subsection{Idea of the proofs}

Obviously, the two main theorems of the paper, Theorems \ref{thm:isocnt} and \ref{thmcont}, state results of the same nature. Beyond this similarity, their proofs share a common structure and a common renormalisation step. The idea of the delicate part of both proofs is inspired by Cox and Kesten's method in \cite{MR633228}. Consider that some edges of $\ZZ^d$ are "good" (i.e. open, or of passage time smaller than some constant), and the others are "bad", for a given law of the environment (a parameter $p$ for the percolation, or a given law $G$ of passage times), and look at a path of good edges in this setting. Then change a little bit your environment : decrease $p$ to $p-\epsilon$, or increase the passage times of the edges. Some edges of the chosen path become bad. To recover a path of good edges, you have to bypass these edges. The most intuitive idea is to consider the cluster of bad edges around each one of them, and to bypass the edge by a short path along the boundary of this cluster. This idea works successfully in Cox and Kesten's paper. Unfortunately in our setting the control we have on these boundaries, or on the number of new bad edges we create, is not good enough. This is the reason why we cannot perform our construction of a modified good path at the scale of the edges. Thus we need to use a coarse graining argument to construct the bypasses at the scale of good blocks.

In section \ref{secdef}, we give more precise definitions of the studied objects and state some preliminary results. In Section \ref{secrenorm}, we present the renormalization process and the construction of modified paths that will be useful to study both the time constant and the isoperimetric constant. Sections \ref{secetape2} and \ref{seccontfpp} are devoted to the study of first-passage percolation. In Section \ref{secetape2}, we use the renormalization argument to study the effect of truncating the passage times on the time constant. We then use it in Section \ref{seccontfpp} to prove the continuity of the time constant. Finally Section \ref{seccheeger} is devoted to the proof of the continuity of the isoperimetric constant, using again the renormalization argument.


\section{Definitions and preliminary results}
\label{secdef}

In this section we give a formal definition of the objects we briefly presented in the introduction. We also present the coupling that will be useful in the rest of the paper, and prove the monotonicity of the time constant.


\subsection{Lattice and passage times}

Let $d\geq 2$. We consider the graph whose vertices are the points of $\ZZ^d$, and we put an edge between two vertices $x$ and $y$ if and only if the Euclidean distance between $x$ and $y$ is equal to $1$. We denote this set of edges by $\EE^d$. We denote by $0$ the origin of the graph. For $x=(x_1,\dots , x_d)\in \R^d$, we define $\|x\|_1 = \sum_{i=1}^d |x_i|$, $\|x\|_2 = \sqrt{\sum_{i=1}^d x_i^2}$ and $\|x\|_{\infty} = \max \{ |x_i| \,:\, i \in \{1,\dots , d\} \}$.

Let $(t(e),e\in \EE^d)$ be a family of i.i.d. random variables taking values in $[0, +\infty]$ with common distribution $G$. We emphasize that $+\infty$ is a possible value for the passage times, on the contrary to what is assumed in classical first-passage percolation. The random variable $t(e)$ is called the {\em passage time} of $e$, i.e., it is the time needed to cross the edge $e$. If $x,y$ are vertices in $\ZZ^d$, a path from $x$ to $y$ is a sequence $r=(v_0,e_1, \dots , e_n, v_n)$ of vertices $(v_i)_{i=0, \dots , n}$ and edges $(e_i)_{i=1, \dots ,n}$ for some $n\in \NN$ such that $v_0 = x$, $v_n=y$ and for all $i\in \{1,\dots ,n\}$, $e_i$ is the edge of endpoints $v_{i-1}$ and $v_i$. We define the length $|r|$ of a path $r$ as its number of edges and we define the passage time of $r$ by $T(r) = \sum_{e \in r} t(e)$. We obtain a random pseudo-metric $T$ on $\ZZ^d$ in the following way (the only possibly missing property is the separation of distinct points):
$$ \forall x,y \in \ZZ^d \,, \quad T(x,y) \,=\, \inf \{ T(r) \,:\, r \textrm{ is a path from } x \textrm{ to } y \} \in [0,+\infty]\,.$$
Since different laws appear in this article, we put a subscript $G$ on our notations to emphasize the dependance with respect to  the probability measure $G$ : $t_G(e)$, $T_G(r)$ and $T_G (x,y)$.

As we are interested in the asymptotic behavior of the pseudo-metric $T_G$,  we will only consider laws $G$ on $[0,+\infty]$ such that $G(\RRp)> p_c(d)$. Here and in the following, $p_c(d)$ denotes the critical parameter for bond Bernoulli percolation on $(\ZZ^d, \EE^d)$. Thus there a.s. exists a unique infinite cluster $\C_{G,\infty}$ in the super-critical percolation $(\ind{\{ t_G(e) <\infty \}}, e\in \EE^d)$ that only keeps edges with finite passage times. Our generalized first-passage percolation model with time distribution $G$ is then equivalent to  standard i.i.d. first-passage percolation (where the passage time of an edge $e$ is the law of $t_G(e)$ conditioned to be finite) on a super-critical Bernoulli percolation performed independently (where  the parameter for an edge to be closed is $G(\{+\infty\})$). For instance, if we take $G_p = p \delta_1 + (1-p) \delta_{+\infty}$ with $p>p_c(d)$, the pseudo-distance $T_{G_p}$
is the chemical distance in supercritical bond percolation with parameter $p$.

To get round the fact that the times $T_G$ can take infinite values,  we introduce some regularized times $\wT_G^{\C}$, for well chosen sets $\C$. These regularized passage times have better integrability properties.
Let $\C$ be a subgraph of $(\ZZ^d, \EE^d)$. Typically, $\C$ will be the infinite cluster of an embedded supercritical Bernoulli bond percolation. For every $x\in \ZZ^d$, we define the random vertex $\wx^{\C}$ as the vertex of $\C$ which minimizes $\|x-\wx^{\C} \|_1$, with a deterministic rule to break ties. We then define the regularized passages times $\wT_G^{\C}$ by
$$ \forall x,y \in \ZZ^d \,,\quad \wT_G^{\C} (x,y) \,=\, T_G (\wx^{\C}, \wy^{\C})\,. $$


\subsection{Definition of the Cheeger constant in supercritical percolation on $\Z^2$}
\label{secdefcheeger}

We collect in this subsection the definitions and properties of the Cheeger constant obtained in \cite{biskup2012isoperimetry}. The Cheeger constant can be represented as the solution of a continuous isoperimetric problem with respect to some norm. To define this norm, we first require some definitions. We fix $p>p_c(2)$, we denote by $\C_p $ the $\P_p$-a.s. unique infinite cluster $\C_{G_p, \infty}$ and we set $\theta_p=\P_p(0 \in \C_p)$. 

For a path $r=(v_0,e_1,\ldots,e_n,v_n)$, and $i\in\{2,\ldots,n-1\}$, an edge $e=(x_i,z)$ is said to be a right-boundary edge if $z$ is a neighbor of $x_i$ between $x_{i+1}$ and $x_{i-1}$ in the clockwise direction. The right boundary $\partial^+r$ of $r$ is the set of right-boundary edges. A path is called right-most if it uses every edge at most once in every orientation and it doesn't contain right-boundary edges. See Figure \ref{fig:rmp}; the solid lines represent the path, dashed lines represent the right-boundary edges, and the curly line is a path in the medial graph which shows the orientation (see  \cite{biskup2012isoperimetry} for a thorough discussion). For $x,y\in\mathbb{Z}^2$, let $\mathcal{R}(x,y)$ be the set of right-most paths from $x$ to $y$. For a path $r\in\mathcal{R}(x,y)$, define ${\bf b}(r)=|\{e\in\partial^+r:e\text{ is open}\}|$. For $x,y\in\mathcal{C}_p$ we define the right boundary distance, $b(x,y)=\inf\{{\bf b}(r):r\in\mathcal{R}(x,y), \, r\text{ is open}\}$. The next result yields uniform convergence of the right boundary distance to a norm on~$\mathbb{R}^2$.
\begin{figure}
\includegraphics[width=3in]{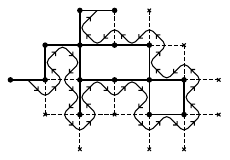}
\caption{A right most path}
\label{fig:rmp}
\end{figure}

\begin{prop}[Definition of the norm, Theorem 2.1 in \cite{biskup2012isoperimetry} ]
For any $p>p_c(2)$, there exists a norm $\beta_p$ on~$\mathbb{R}^2$ such that for any $x \in \mathbb{R}^2$, 
\begin{equation}\nonumber
        \beta_p(x)
        := \lim_{n \rightarrow \infty} \frac{b(\widetilde{0}^{\C_p}, \widetilde{nx}^{\C_p})}{n} \quad \quad \P_p-a.s. \text{ and in } L^1(\P_p).
\end{equation}
Moreover, the convergence is uniform on $\mathbb{S}^1 = \{x\in\mathbb{R}^2:\: \|x\|_2 = 1\}$. 
\end{prop}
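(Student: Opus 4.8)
The plan is to follow the classical Kingman-type subadditive scheme adapted to the percolation setting, using the regularized right-boundary distance $b(\widetilde{0}^{\C_p}, \widetilde{nx}^{\C_p})$ as the basic quantity. First I would check that for $x \in \mathbb{Z}^2$ the sequence $b(\widetilde{0}^{\C_p}, \widetilde{nx}^{\C_p})$ is (approximately) subadditive: given a right-most open path realizing $b(\widetilde{0}^{\C_p}, \widetilde{(m+n)x}^{\C_p})$ one cannot simply concatenate optimal paths for $\widetilde{0}^{\C_p} \to \widetilde{mx}^{\C_p}$ and $\widetilde{mx}^{\C_p} \to \widetilde{(m+n)x}^{\C_p}$, because the concatenation of two right-most paths need not be right-most; so the right-most structure and the counting of open right-boundary edges must be handled with care. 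I expect one needs a lemma (from \cite{biskup2012isoperimetry}) that concatenation changes ${\bf b}$ by at most a bounded additive correction, or a direct surgery argument controlling the interface where the two paths meet. Granting approximate subadditivity together with the obvious stationarity under the $\Z^2$-translations that fix the percolation law, the subadditive ergodic theorem of Kingman (or its multiparameter variant) yields the almost sure and $L^1$ limit $\beta_p(x) = \lim_n b(\widetilde{0}^{\C_p}, \widetilde{nx}^{\C_p})/n$ along integer directions, provided an integrability bound $\E_p[b(\widetilde{0}^{\C_p}, \widetilde{x}^{\C_p})] < \infty$ holds, which follows because $b(x,y) \le |r|$ for any open right-most path $r$ and the chemical distance in supercritical percolation has finite (even exponential) moments.

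Next I would extend $\beta_p$ from $\Z^2$ to $\mathbb{Q}^2$ by homogeneity ($\beta_p(qx) = q\beta_p(x)$ for rational $q>0$, using the limit along the subsequence $n/q$), then verify the triangle inequality and symmetry on $\Q^2$ using again the near-subadditivity plus the fact that reversing a right-most path and re-orienting it costs only a controlled amount, and finally extend by uniform continuity to all of $\R^2$. To get that $\beta_p$ is a genuine norm rather than a seminorm one must show $\beta_p(x) > 0$ for $x \ne 0$: this uses that any open right-most path from $\widetilde{0}$ to $\widetilde{nx}$ must accumulate order $n$ open right-boundary edges, which can be read off from the isoperimetric-type lower bounds $c < n\varphi_n(p)$ cited in the introduction, or more directly from a counting argument showing ${\bf b}(r) \ge c|r|$ in expectation combined with $|r| \ge \|x\|_1 n$.

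The uniform convergence on $\mathbb{S}^1$ is then obtained from the pointwise (directional) convergence together with an equicontinuity / Lipschitz estimate: one shows that $x \mapsto b(\widetilde{0}^{\C_p}, \widetilde{x}^{\C_p})$ differs by at most $C\|x-y\|_1$ (plus lower-order fluctuations controlled by a concentration or moment bound) when $x$ and $y$ are close, uniformly in scale, so that the family of functions $x \mapsto b(\widetilde{0}^{\C_p}, \widetilde{nx}^{\C_p})/n$ is tight in the uniform topology on the sphere; pointwise convergence on a dense set then upgrades to uniform convergence, and the limit inherits the Lipschitz bound, reproving continuity of $\beta_p$. The main obstacle, I expect, is precisely the non-additivity of the right-most structure under concatenation and path reversal: unlike ordinary first-passage percolation where $T(x,z) \le T(x,y) + T(y,z)$ holds trivially, here gluing right-most paths can create or destroy right-boundary edges at the junction, so the bulk of the work is a combinatorial-geometric surgery (essentially lifted from \cite{biskup2012isoperimetry}) showing these boundary effects are $o(n)$ and do not spoil the subadditive ergodic theorem's hypotheses.
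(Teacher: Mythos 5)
This proposition is stated in the paper as a direct citation of Theorem~2.1 in \cite{biskup2012isoperimetry}; the present paper offers no proof of it, only uses it as an input. So there is, strictly speaking, no ``paper's own proof'' to compare against.

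That said, your sketch is a faithful reconstruction of the route taken in \cite{biskup2012isoperimetry}, and it correctly identifies the two places where the argument genuinely deviates from textbook first-passage percolation. You rightly flag that concatenation of right-most paths need not be right-most, so $b$ is only \emph{approximately} subadditive; indeed, later in this very paper the authors invoke exactly that near-subadditivity (``equation~(2.27) in \cite{biskup2012isoperimetry}'', used in the proof of Lemma~\ref{propstab2}, which carries a $+4$ correction term) and the gluing lemma \cite[Lemma~2.6]{biskup2012isoperimetry}. Your integrability step — that $b(x,y)$ is controlled by the chemical distance, which has exponential moments — matches the use of \cite[Lemma~2.5]{biskup2012isoperimetry} ($|\partial^+\gamma| < 3|\gamma|$) together with Antal–Pisztora, which is also how the present paper bounds $b_p$ in Lemma~\ref{propstab2} and in the proof of Lemma~\ref{lem:normcont}. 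Your positivity argument via a linear lower bound on $\mathbf{b}(r)$ along long paths corresponds to what the paper records as Lemma~\ref{lem:rightint} (Proposition~2.9 of \cite{biskup2012isoperimetry}). And your equicontinuity/Lipschitz route to uniform convergence on $\mathbb{S}^1$ is the standard upgrade and is consistent with how the cited paper obtains uniformity. In short: no gap, and the approach is the same as the one in the reference the paper defers to; you have simply made explicit the surgery and near-subadditivity points that the cited work handles in detail but that the present paper, reasonably, does not reprove.
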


We will require the following control on the length of right-most paths.
\begin{lemme}[Proposition 2.9 in \cite{biskup2012isoperimetry}] \label{lem:rightint}
There exist $C,C',\alpha>0$ (depending on $p$) such that for all $n$,
\[
\pr \left[\exists \gamma\in \bigcup_{x\in\mathbb{Z}^2} \mathcal{R}(0,x)\,:\,|\gamma|>n \,,\,\, \mathbf{b} (\gamma) \leq \alpha n \right] \le  Ce^{-C'n}
.\]
\end{lemme}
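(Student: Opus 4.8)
The plan is to combine a deterministic combinatorial analysis of right-most paths with a large-deviation estimate for the subcritical \emph{dual} percolation; recall that since $p>p_c(2)=1/2$, the collection of closed edges is a subcritical bond percolation on $\ZZ^2$, at parameter $1-p<p_c(2)$.

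\emph{Step 1: a structural lemma.} I would first show that there is a universal constant $c_0>0$ such that every right-most path $\gamma$ satisfies $|\partial^+\gamma|\ge c_0|\gamma|$. This is a turn-by-turn count: at an interior vertex of $\gamma$, the number of right-boundary edges produced is $2$ at a left turn, $1$ at a straight step, $0$ at a right turn (and $3$ at a $U$-turn), so $|\partial^+\gamma|=(|\gamma|-1)+(L-R)+2U$ where $L,R,U$ are the numbers of left turns, right turns and $U$-turns, and $S$ the number of straight steps satisfies $S+L+R+U=|\gamma|-1$. Since a right-most path cannot make more than a bounded number of consecutive right turns without reusing an oriented edge, one gets $R\le C_1(S+L+U+1)$, hence $S+L+U\ge (|\gamma|-C_1-1)/(C_1+1)$ and therefore $|\partial^+\gamma|=S+2L+3U\ge S+L+U\ge c_0|\gamma|$ for $|\gamma|$ large (the statement being vacuous for $|\gamma|$ bounded).

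\emph{Step 2: reduction to a fixed length and extraction of closed edges.} It suffices to bound, for each integer $\ell>n$, the probability $p_\ell$ of the event that some $\gamma\in\bigcup_{x\in\ZZ^2}\mathcal R(0,x)$ has $|\gamma|=\ell$ and $\mathbf b(\gamma)\le\alpha\ell$: on the event of the lemma some such $\gamma$ has $\mathbf b(\gamma)\le\alpha n\le\alpha\ell$, and a geometric summation $\sum_{\ell>n}p_\ell$ closes the argument once $p_\ell\le Ce^{-C'\ell}$. Fixing such a $\gamma$, Step~1 gives at least $c_0\ell$ right-boundary edges; choosing $\alpha<c_0$, at most $\alpha\ell$ of them are open, so at least $(c_0-\alpha)\ell$ of them are closed. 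Note that $\partial^+\gamma$ is a deterministic function of the geometric object $\gamma$, disjoint from the edge set of $\gamma$.

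\emph{Step 3: the large-deviation estimate, and the main obstacle.} The delicate point is to convert ``a long right-most path whose right boundary is mostly closed'' into an event of \emph{genuinely} exponentially small probability: a naive union bound over the $\sim 3^\ell$ right-most paths of length $\ell$ fails because each contributes only a factor $(1-p)^{\Theta(\ell)}$, and $3(1-p)$ need not be $<1$ near $p_c(2)$; a Peierls count over closed dual contours fails for the same connective-constant reason. One must instead exploit the rigidity of right-most paths: away from the at most $\alpha\ell$ positions of its open right-boundary edges, $\gamma$ is forced to trace the boundary of the open clusters lying on its right, so $\gamma$ splits into at most $\alpha\ell+1$ ``runs'', each following one such boundary, of total length $\ell$. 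The core of the argument (carried out in \cite{biskup2012isoperimetry}) is to use planar duality to deduce from this that, inside the box of radius $\ell$ around $0$, there is a \emph{single} finite connected component of closed edges whose size is of order $\ell$ — i.e.\ a macroscopic dual cluster — rather than merely $\Theta(\alpha\ell)$ microscopic ones; an alternative route is a dyadic decomposition on $|\partial^+\gamma|$ together with the purely combinatorial fact that the number of right-most paths of length $\ell$ with an abnormally short right boundary grows at most like $(1+\varepsilon)^{\ell}$. Either way, one concludes by a union bound over the $O(\ell^2)$ possible locations of this closed component and the exponential decay of the cluster-size distribution in subcritical percolation (Menshikov; Aizenman--Barsky), applied to the dual at parameter $1-p<p_c(2)$, which yields $p_\ell\le Ce^{-C'\ell}$ and hence the lemma. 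The book-keeping guaranteeing the extraction of a macroscopic closed cluster (rather than only microscopic ones) is exactly where the difficulty concentrates.
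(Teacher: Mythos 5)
This lemma is not proved in the present paper: it is quoted, with explicit attribution, as Proposition~2.9 of \cite{biskup2012isoperimetry}, and is used as a black box throughout. There is therefore no proof in this paper for your attempt to be measured against; what you have produced is a sketch of a reconstruction of the argument in \cite{biskup2012isoperimetry}.

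On the sketch itself: Step~1, the deterministic lower bound $|\partial^+\gamma|\ge c_0|\gamma|$ for right-most paths, is a sensible combinatorial observation and can probably be made rigorous, though the turn count needs care (the off-by-one should be $S+L+R+U=|\gamma|-2$, since only the $|\gamma|-2$ interior vertices $v_2,\dots,v_{n-1}$ contribute, and one must actually check how many consecutive right turns a right-most path can make). Step~2 is fine. The genuine gap is Step~3, and the fix you propose there does not work. If $\gamma$ has at most $\alpha\ell$ open right-boundary edges, it may visit on the order of $\alpha\ell$ distinct closed clusters, each of size only $O(1/\alpha)$ -- in the far-subcritical regime for the closed edges this is exactly what happens, and nothing in the hypotheses forces a \emph{single} closed cluster of size $\Theta(\ell)$. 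Consequently a union bound over $O(\ell^2)$ positions of one macroscopic closed cluster, combined with the exponential tail of a single cluster size, cannot close the argument. The auxiliary assertion that ``$\gamma$ is forced to trace the boundary of the open clusters lying on its right'' is also unjustified for an arbitrary right-most path, which the lemma allows to contain closed edges. Your alternative route (a dyadic decomposition in $|\partial^+\gamma|$ together with a combinatorial bound of $(1+\varepsilon)^\ell$ on the number of right-most paths with near-minimal right boundary) is closer in spirit to a workable strategy, but this combinatorial estimate is precisely the delicate point and is asserted rather than proved; a rough back-of-the-envelope count of turn sequences with maximal right-turn density shows that it is not automatic. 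As written, the proposal correctly locates where the difficulty lies, but does not resolve it, and the explicit appeal to \cite{biskup2012isoperimetry} for ``the core of the argument'' is an admission that the proof is incomplete.
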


 The connection between the Cheeger constant and the norm $\beta_p$ goes through a continuous isoperimetric problem. For a continuous curve $\lambda:[0,1]\rightarrow\mathbb{R}^2$, and a norm $\rho$, let the $\rho$-length of $\lambda$ be 
 \[
 \text{len}_\rho(\lambda)=\sup_{N\ge 1}\sup_{0\le t_0<\ldots<t_N\le1}\sum_{i=1}^{N}\rho(\lambda(t_i)-\lambda(t_{i-1}))
 .\]
 A curve $\lambda$ is said to be rectifiable if $\text{len}_\rho(\lambda)<\infty$ for any norm $\rho$. A curve $\lambda$ is called a Jordan curve if $\lambda$ is rectifiable, $\lambda(0)=\lambda(1)$ and $\lambda$ is injective on $[0,1)$. For any Jordan curve $\lambda$, we can define its interior $\text{int}(\lambda)$ as the unique finite component of $\mathbb{R}^2\setminus\lambda([0,1])$. Denote by Leb the Lebesgue measure on $\mathbb{R}^2$. The Cheeger constant can be represented as the solution of the following continuous isoperimetric problem:
 \begin{prop}[Theorem 1.6 in \cite{biskup2012isoperimetry}]\label{thm:cheegervariational}
 \label{propbiskup}
 For every $p>p_c(2)$, 
 \[
 \lim_{n\rightarrow+\infty}n\varphi_n(p)=(\sqrt{2} \, \theta_p)^{-1}\inf\{\text{\rm len}_{\beta_p}(\lambda):\lambda\textrm{ is a Jordan curve}, \mathrm{Leb}(\mathrm{int}(\lambda))=1\}
 .\]
 \end{prop}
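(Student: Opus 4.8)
The plan is to prove the matching bounds $\liminf_n n\varphi_n(p)\ge(\sqrt2\,\theta_p)^{-1}I$ and $\limsup_n n\varphi_n(p)\le(\sqrt2\,\theta_p)^{-1}I$, where
\[
I\;:=\;\inf\bigl\{\mathrm{len}_{\beta_p}(\lambda):\lambda\text{ a Jordan curve},\ \mathrm{Leb}(\mathrm{int}(\lambda))=1\bigr\}.
\]
By $1$-homogeneity of $\beta_p$, a Jordan curve whose interior has area $v$ has $\mathrm{len}_{\beta_p}\ge\sqrt v\,I$, with equality for the $\beta_p$-Wulff shape of area $v$. Everything rests on a dictionary between macroscopic subsets of $\C_p\cap[-n,n]^2$ and planar regions, valid on events of $\P_p$-probability tending to $1$: if $A\subset\C_p\cap[-n,n]^2$ with $|A|\asymp n^2$ fills out, after rescaling by $1/n$, a region $\widehat A_n\subset[-1,1]^2$, then $|A|=(1+o(1))\,\theta_p n^2\,\mathrm{Leb}(\widehat A_n)$ — the ergodic theorem for the density of $\C_p$, applied on a mesoscopic grid uniformly in the region — and, if $A$ is moreover connected with rescaled outer interface close to a Jordan curve $\lambda$, then $|\partial A|$ is comparable to $n\,\mathrm{len}_{\beta_p}(\lambda)$. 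This second comparison carries all the difficulty: one realizes $\partial A$ as the set of open right-boundary edges $\mathbf b(\gamma)$ of a right-most circuit $\gamma$ around $A$, then passes from $\mathbf b(\gamma)$ to $n\,\mathrm{len}_{\beta_p}(\lambda)$ via the $L^1$ convergence $b(\widetilde{0}^{\C_p},\widetilde{nx}^{\C_p})/n\to\beta_p(x)$ uniform on $\SS^1$ and Lemma~\ref{lem:rightint}.

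For the lower bound, take a near-minimizer $A_n$. Replacing $A_n$ by the connected component minimizing $|\partial\,\cdot\,|/|\cdot|$ only relaxes the constraint $|A|\le\tfrac12|\C_p\cap[-n,n]^2|$, so we may take $A_n$ connected; the a priori bound $n\varphi_n(p)\le C$ combined with the anchored isoperimetric inequality $|\partial A_n|\ge c|A_n|^{1/2}$ for the supercritical cluster then forces $|A_n|\asymp n^2$. Filling the bounded complementary components of $\widehat A_n$ neither increases $|\partial A_n|$ nor decreases $|A_n|$, so a near-minimizer has hole-volume $o(n^2)$, and $A_n$ is simply connected up to negligible corrections. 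Its rescaled outer interface $\widehat\gamma_n$ is a closed rectifiable curve of $\beta_p$-length $\asymp n$; by compactness of curves of uniformly bounded length and lower semicontinuity of $\mathrm{len}_{\beta_p}$, along a subsequence $\tfrac1n\widehat\gamma_n\to\lambda$, a Jordan curve with $\mathrm{Leb}(\mathrm{int}(\lambda))=v$. The dictionary gives $|A_n|=(1+o(1))\theta_p n^2 v$; since $|\C_p\cap[-n,n]^2|=(1+o(1))\theta_p(2n)^2$, the constraint forces $v\le2$; and $|\partial A_n|\ge(1-o(1))n\,\mathrm{len}_{\beta_p}(\lambda)\ge(1-o(1))n\sqrt v\,I$. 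Hence
\[
n\varphi_n(p)=\frac{n\,|\partial A_n|}{|A_n|}\;\ge\;(1-o(1))\,\frac{n^2\sqrt v\,I}{\theta_p n^2 v}\;=\;(1-o(1))\,\frac{I}{\theta_p\sqrt v}\;\ge\;(1-o(1))\,\frac{I}{\sqrt2\,\theta_p}.
\]

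For the upper bound, let $\mathcal W$ be the $\beta_p$-Wulff shape normalized to $\mathrm{Leb}(\mathcal W)=2$, so $\mathrm{len}_{\beta_p}(\partial\mathcal W)=\sqrt2\,I$; invariance of $\beta_p$ under rotation by $\pi/2$ forces $\mathcal W\subset[-1,1]^2$, since a $\tfrac\pi2$-symmetric convex body of area $2$ reaching distance $L$ from the origin contains the $\ell^1$-diamond of radius $L$, of area $2L^2\le2$. Fix a fine net $u_1,\dots,u_k$ on $\partial\mathcal W$, choose $x_i^{(n)}\in\C_p$ near $nu_i$, concatenate near-geodesic right-most paths between consecutive $x_i^{(n)}$ into a right-most circuit $\gamma_n$ with $\mathbf b(\gamma_n)\le(1+o(1))\,n\sum_i\beta_p(u_{i+1}-u_i)$, and let $A_n$ be the cluster vertices it encloses. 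Then $|\partial A_n|\le(1+o(1))\mathbf b(\gamma_n)$ and $|A_n|=(1+o(1))\,2\theta_p n^2$ (shrinking $\mathcal W$ by $1-\eta$, $\eta\to0$, to keep $|A_n|\le\tfrac12|\C_p\cap[-n,n]^2|$), so refining the net so that $\sum_i\beta_p(u_{i+1}-u_i)\to\mathrm{len}_{\beta_p}(\partial\mathcal W)=\sqrt2\,I$ yields $n\varphi_n(p)\le(1+o(1))(\sqrt2\,\theta_p)^{-1}I$. Combining the two bounds proves the proposition.

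The main obstacle is the inequality $|\partial A_n|\ge(1-o(1))n\,\mathrm{len}_{\beta_p}(\lambda)$ for an \emph{arbitrary} near-minimizer. It couples a combinatorial point — representing the edge-boundary of a cluster set as the open right-boundary of a right-most circuit, and showing that $\mathbf b$ is essentially additive along it so that $\mathbf b(\gamma_n)\gtrsim\sum_i b(x_i^{(n)},x_{i+1}^{(n)})$, with care at junctions, around holes, and off the exceptional set of Lemma~\ref{lem:rightint} — with a geometric point: a priori a near-minimizer could have a filamentary or fractal interface admitting no rectifiable limit, and ruling this out requires a coarse-graining argument showing that, with high probability, any set of small edge-boundary has an interface that is straight at mesoscopic scale, the key input again being Lemma~\ref{lem:rightint} (exponential rarity of long right-most paths with small open right-boundary). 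By contrast the upper bound is a direct construction.
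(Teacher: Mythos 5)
The paper does not prove this proposition: it imports it verbatim as Theorem~1.6 of \cite{biskup2012isoperimetry} and treats it as a black box. There is therefore no proof in the paper for your argument to be compared with; what you have written should be read as a synopsis of the proof strategy in \cite{biskup2012isoperimetry} itself.

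As a synopsis, your plan is faithful to that reference: a direct Wulff construction for the upper bound, a compactness--regularity argument for the lower bound passing from near-minimizing discrete sets $A_n$ to a limiting rectifiable Jordan curve, and correct bookkeeping relating $|A_n|$, $\theta_p n^2$, the normalization $\mathrm{len}_{\beta_p}(\partial\mathcal W)=\sqrt2\,I$ and the containment $\mathcal W\subset[-1,1]^2$. As a \emph{proof}, however, it has genuine gaps, most of which you name yourself. First, the ``dictionary'' $|A|=(1+o(1))\,\theta_p n^2\,\mathrm{Leb}(\widehat A_n)$, uniformly over near-minimizing $A$, is not an application of the pointwise ergodic theorem: one must rule out that a near-minimizer concentrates on a region of atypical cluster density, which requires concentration estimates on a mesoscopic grid together with control of how the boundary cuts that grid. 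Second, the lower bound hinges on promoting $\partial A_n$ to a right-most circuit and showing $\mathbf b$ along it is comparable to $n\,\mathrm{len}_{\beta_p}(\lambda)$; this is not automatic for an arbitrary near-minimizer and is exactly the mesoscopic regularity of minimizers (interfaces are locally straight with high probability) that occupies most of the lower-bound section of \cite{biskup2012isoperimetry} --- Lemma~\ref{lem:rightint} is an ingredient there, not a substitute. Third, extracting a Jordan-curve limit by compactness requires an a priori bounded-variation or equicontinuity bound on the rescaled interfaces which you do not supply. None of these steps is misconceived, but collectively they \emph{are} the theorem: as written, your lower bound is a plan rather than an argument, while the upper bound is essentially complete modulo the routine additivity estimate for $b$ along a fine net of $\partial\mathcal W$.
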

Moreover one obtains a limiting shape for the sets that achieve the minimum in the definition of $\varphi_n(p)$. This limiting shape is given by the Wulff construction \cite{wulff1901xxv}. Denote by 
\begin{equation}\label{eq:wulff}
 W_p=\bigcap_{\hat{n}:\|\hat{n}\|_2=1}\{x\in\mathbb{R}^2:\hat{n}\cdot x\le\beta_p(\hat{n})\} \text{ and } \widehat{W}_p=\frac{W_p}{\sqrt{\text{Leb}(W_p)}},
\end{equation}
where $\cdot$ denotes the Euclidean inner product. The set $\widehat{W}_p$ is a minimizer for the isoperimetric problem associated with the norm $\beta_p$, and it gives the asymptotic shape of the minimizer sets in the definition of $\varphi_n(p)$.
 Denote by ${\mathcal{U}}_n(p)$ be the set of minimizers of $\varphi_n(p)$; then  
 \begin{prop}[Shape theorem for the minimizers, \cite{biskup2012isoperimetry} Theorem 1.8]
\label{theo:minimizer}
For every $p > p_c(2)$, $\P_p$ almost surely, 
\begin{align*}
& \max_{U \in {\mathcal{U}}_n(p)}
        \inf_{\substack{\xi \in \mathbb{R}^2 
                        }}
            d_{\rm H} \left( \frac{U}n \,, \,\,  \xi + {\sqrt{2}\, }{\widehat{W}_p} \right) \,\,\underset{n\to+\infty}\longrightarrow\,\, 0. 
\end{align*}
\end{prop}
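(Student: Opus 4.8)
The plan is to derive the shape theorem from the variational representation of the limiting constant in Proposition~\ref{propbiskup}, the classical uniqueness statement for the Wulff problem, and a compactness argument for the rescaled minimizers, the $L^1$ convergence being then upgraded to Hausdorff convergence by a regularity estimate on minimizers.

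First I would record the classical fact that, for any norm $\rho$ on $\mathbb R^2$, the length functional $\lambda\mapsto\mathrm{len}_\rho(\lambda)$ restricted to Jordan curves enclosing a prescribed area $v$ is minimized, uniquely up to translation and up to Lebesgue-null modification of the enclosed region, by the boundary of the dilate of area $v$ of the Wulff shape $W_\rho=\bigcap_{\hat n}\{x:\hat n\cdot x\le\rho(\hat n)\}$; moreover any minimizing sequence of enclosed regions converges in $L^1(\mathbb R^2)$, after recentering, to that region. Applying this with $\rho=\beta_p$ and $v=2$, and recalling from~\eqref{eq:wulff} that $\widehat{W}_p$ has unit Lebesgue area, identifies $\sqrt2\,\widehat{W}_p$ as the unique minimizer, up to translation, of the area-$2$ version of the continuous problem appearing in Proposition~\ref{propbiskup}.

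Next, given a minimizer $U_n\in\mathcal U_n(p)$, set $\widehat U_n=n^{-1}\bigl(U_n+[-\tfrac12,\tfrac12]^2\bigr)$, viewed as a set of finite perimeter inside $[-1,1]^2$. Two inputs are needed. For the volume: since a Wulff shape of smaller area has strictly larger surface-to-volume ratio, a minimizer saturates the box constraint, $|U_n|=\tfrac12|\C_p\cap[-n,n]^2|(1+o(1))$, so by the ergodic theorem for the density $\theta_p$ one gets $\mathrm{Leb}(\widehat U_n)\to 2$. For the perimeter: one traces $\partial U_n$ inside $\C_p$ as a union of right-most cycles and compares $|\partial U_n|$ with $n\,\mathrm{len}_{\beta_p}(\partial\widehat U_n)$; the lower bound follows from the convergence $b(\widetilde{0}^{\C_p},\widetilde{nx}^{\C_p})/n\to\beta_p$ patched along the traced boundary together with Lemma~\ref{lem:rightint}, which forbids long boundary pieces carrying too few open right-boundary edges, while the matching upper bound comes from the near-optimality of $U_n$ and Proposition~\ref{propbiskup}. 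Along any subsequence, $\widehat U_n$ then converges in $L^1$ to some $\widehat U_\infty$ with $\mathrm{Leb}(\widehat U_\infty)=2$ realizing the continuous infimum, hence by the first step $\widehat U_\infty=\xi+\sqrt2\,\widehat{W}_p$ for some $\xi\in\mathbb R^2$ up to a null set.

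It remains to upgrade $L^1$ to Hausdorff convergence. As the limit is a fixed convex body, one must exclude two failure modes: points of $\widehat U_n$ far from $\xi+\sqrt2\,\widehat{W}_p$, ruled out by a ``no long tentacle'' estimate (a connected protrusion of diameter $\delta n$ forces at least $c\delta n$ extra open right-boundary edges in $\partial U_n$, contradicting near-optimality, again via Lemma~\ref{lem:rightint}); and points of $\xi+\sqrt2\,\widehat{W}_p$ far from $\widehat U_n$, ruled out by a local volume and isoperimetry comparison, since an interior void would cost either too much volume, against the saturation of the constraint, or too much boundary. This gives $d_{\rm H}(\widehat U_n,\xi_n+\sqrt2\,\widehat{W}_p)\to 0$ along the subsequence; since every subsequence has such a further subsequence and the recentering $\xi_n$ is absorbed by the infimum over $\xi$, the a.s.\ convergence of $\max_{U\in\mathcal U_n(p)}\inf_\xi d_{\rm H}(U/n,\xi+\sqrt2\,\widehat{W}_p)$ to $0$ follows, the estimates being uniform over near-optimal sets so as to cover the maximum over all minimizers. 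The main obstacle is the discrete-to-continuous perimeter lower bound and the closely related ``no tentacle'' regularity, both resting on the right-most-path machinery and the exponential bound of Lemma~\ref{lem:rightint}.
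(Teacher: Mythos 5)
The paper does not prove Proposition~\ref{theo:minimizer}: it is quoted verbatim as Theorem~1.8 of Biskup, Louidor, Procaccia and Rosenthal~\cite{biskup2012isoperimetry}, and is used in this article only as an imported black box (together with Proposition~\ref{thm:cheegervariational}) to reduce Theorem~\ref{thm:isocnt} to the continuity of $p\mapsto\beta_p$. So there is no ``paper proof'' to compare your argument against.

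As a blind reconstruction of the cited theorem, your outline is a reasonable high-level description of the strategy in~\cite{biskup2012isoperimetry}: anisotropic isoperimetric uniqueness of the Wulff shape up to translation and null modification; volume saturation of the box constraint by the scaling of the isoperimetric ratio and the ergodic theorem for $\theta_p$; a discrete-to-continuous surgery on $\partial U_n$ via right-most circuits, with Lemma~\ref{lem:rightint} (their Proposition~2.9) supplying the length control; compactness in $L^1$ for sets of finite perimeter; and then a separate regularity step to upgrade $L^1$ to Hausdorff convergence, ruling out tentacles and interior voids. The genuinely hard parts --- the matching lower and upper perimeter bounds linking $|\partial U_n|$ to $n\,\mathrm{len}_{\beta_p}(\partial\widehat U_n)$, the quantitative ``no tentacle'' estimate, and the uniformity over all near-optimal sets needed to take the maximum over $\mathcal U_n(p)$ --- are only sketched in your proposal, and are precisely where the technical work of~\cite{biskup2012isoperimetry} concentrates. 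That is acceptable for a roadmap, but you should be explicit that you are reproducing the argument of the cited paper rather than supplying a new one, and if a full proof is wanted it should simply be a reference to~\cite{biskup2012isoperimetry}.
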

By Proposition \ref{thm:cheegervariational} and Definition \eqref{eq:wulff},  Theorem \ref{thm:isocnt} will follow from the continuity of $p\mapsto \beta_p$.


\subsection{Definition and properties of the time constant}

As announced in the introduction, we follow the approach  by Cerf and Th\'eret in \cite{CerfTheret14}, which requires no integrability condition on the restriction of $G$ to $\RRp$.  We collect in this subsection the definition and properties of the time constants obtained in their paper.

Let $G$ be a probability measure on $[0,+\infty]$ such that $G([0,+\infty))>p_c(d)$, and let $M>0$ be such that $G([0,M])> p_c(d)$. We denote by $\C_{G,M}$ the a.s. unique infinite cluster of the percolation $(\ind{\{ t_G(e) \leq M \}}, e\in \EE^d)$, \ie the percolation obtained by keeping only edges with passage times less than $M$. For any $x,y \in \Zd$, the (level $M$) regularized passage time $\wT_G^{\C_{G,M}}(x,y)$ is then
$$\wT_G^{\C_{G,M}}(x,y) \,=\, T_G (\wx^{\C_{G,M}}, \wy^{\C_{G,M}})\,. $$
The parameter $M$ only plays a role in the choice of $\wx^{\C_{G,M}}$ and $\wy^{\C_{G,M}}$. Once these points are chosen, the optimization in $\wT_G^{\C_{G,M}}(x,y)$ is on all paths between $\wx^{\C_{G,M}}$ and $\wy^{\C_{G,M}}$, paths using edges with passage time larger than $M$ included. But as $\wx^{\C_{G,M}} \in \C_{G,M}$ and $\wy^{\C_{G,M}}\in \C_{G,M}$, we know that exists a path using only edges with passage time less than $M$ between these two points. To be more precise, we denote by $D^{\C} (x,y)$ the chemical distance (or graph distance) between two vertices $x$ and $y$ on $\C$:
$$  \forall x,y\in \ZZ^d \,,\quad D^{\C} (x,y) \,=\, \inf \{ |r| \,:\, r \textrm{ is a path from }x \textrm{ to }y \,,\, r\subset \C \} \,,$$
where $\inf \varnothing = +\infty$. The event that the vertices $x$ and $y$ are connected in $\C$ is denoted by \smash{$\{ x \overset{\C}{\longleftrightarrow} y \}$}. Then, for any $x,y \in \Zd$,
$$\wT_G^{\C_{G,M}}(x,y) \, \le \, M  D^{\C_{G,M}} (\wx^{\C_{G,M}}, \wy^{\C_{G,M}})\,. $$
The regularized passage time $\wT_G^{\C_{G,M}}$ enjoys then  the same good integrability properties as the chemical distance on a supercritical percolation cluster (see \cite{AP}):

\begin{prop}[Moments of $\wT$, \cite{CerfTheret14}]\label{prop:integrability}
\label{propmoments}
Let $G$ be a probability measure on $[0,+\infty]$ such that $G([0,+\infty))>p_c(d)$. For every $M\in \RRp$ such that $G([0,M])> p_c(d)$, there exist positive constants $C_1, C_2$ and $C_3$ such that
$$ \forall x\in \ZZ^d \,,\,\, \forall l\geq C_3 \|x\|_1 \,,\quad \PP \left[ \wT_G^{\C_{G,M}} (0,x) > l \right] \,\leq\, C_1 e^{-C_2 l} \,.$$
\end{prop}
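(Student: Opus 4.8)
The statement to prove is Proposition~\ref{prop:integrability}: for $G$ with $G([0,+\infty))>p_c(d)$ and $M$ with $G([0,M])>p_c(d)$, the regularized passage time $\wT_G^{\C_{G,M}}(0,x)$ has exponential tails above $C_3\|x\|_1$.

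The plan is to reduce everything to known estimates on the chemical distance in a supercritical Bernoulli percolation cluster. The key inequality is already recorded in the excerpt: since $\wx^{\C_{G,M}}$ and $\wy^{\C_{G,M}}$ both lie in $\C_{G,M}$, we have
$$\wT_G^{\C_{G,M}}(0,x) \,\le\, M\, D^{\C_{G,M}}(\w0^{\C_{G,M}}, \wx^{\C_{G,M}}).$$
So it suffices to bound $\PP[D^{\C_{G,M}}(\w0^{\C_{G,M}}, \wx^{\C_{G,M}}) > l/M]$.

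First I would observe that the field $(\ind{\{t_G(e)\le M\}}, e\in\EE^d)$ is an i.i.d. Bernoulli bond percolation with parameter $q := G([0,M]) > p_c(d)$, and $\C_{G,M}$ is exactly its infinite cluster. Hence all the standard results on chemical distance in a supercritical cluster apply verbatim. Next I would invoke the Antal--Pisztora estimate from \cite{AP}: there exist constants $A_1,A_2,A_3>0$ (depending only on $d$ and $q$, hence on $G$ and $M$) such that
$$\PP\bigl[\, 0 \overset{\C_{G,M}}{\longleftrightarrow} y,\ D^{\C_{G,M}}(0,y) > A_3\|y\|_1 \,\bigr] \,\le\, A_1 e^{-A_2\|y\|_1}$$
for all $y\in\ZZ^d$. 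The remaining work is to translate this statement about genuine lattice points $0,y$ into the statement about the regularized points $\w0^{\C_{G,M}}, \wx^{\C_{G,M}}$, which are random and a priori unbounded.

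For that translation I would condition on the locations of $\w0^{\C_{G,M}}$ and $\wx^{\C_{G,M}}$. Writing $a = \w0^{\C_{G,M}}$ and $b = \wx^{\C_{G,M}}$, one has $a,b\in\C_{G,M}$ by construction, and $\|a\|_1$, resp. $\|b-x\|_1$, is the $\ell^1$-distance from $0$, resp. $x$, to the cluster, so both have exponential tails (the probability that the $\ell^1$-ball of radius $r$ around a point contains no vertex of the infinite cluster decays exponentially in $r^d$, hence in $r$, by standard supercritical percolation estimates — e.g. Grimmett's book \cite{grimmett-book}). Summing the Antal--Pisztora bound over the possible values of $a$ and $b$, and using $\|b-a\|_1 \le \|x\|_1 + \|a\|_1 + \|b-x\|_1$, one gets that on the event $\{\|a\|_1 \le \eta\|x\|_1\}\cap\{\|b-x\|_1\le\eta\|x\|_1\}$ the quantity $D^{\C_{G,M}}(a,b)$ is, with exponentially high probability, at most $A_3(1+2\eta)\|x\|_1$; and the complementary event (the regularization point being far) is itself exponentially unlikely in $\|x\|_1$. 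Choosing $\eta$ small, then $C_3 := M A_3(1+2\eta)$ (adjusted to absorb the case of small $\|x\|_1$, where one can also just use that the cluster is nonempty), and collecting the exponential rates into a single $C_2>0$ with prefactor $C_1$, yields the claim.

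The main obstacle is purely bookkeeping: handling the randomness of the regularization points $\w0^{\C_{G,M}}, \wx^{\C_{G,M}}$ and making sure the sum over their possible positions of the Antal--Pisztora tail bounds converges and stays exponentially small in $\|x\|_1$ uniformly. There is no real difficulty of principle — one is simply combining the chemical-distance large-deviation estimate of Antal--Pisztora with the (also classical) fact that a given vertex is within $\ell^1$-distance $r$ of the infinite supercritical cluster except with probability exponentially small in $r$ — but one must be careful that the constants depend only on $(d, G, M)$ and not on $x$, and that the threshold $l\ge C_3\|x\|_1$ is stated for the right $C_3$. Note this is essentially a restatement of a result of \cite{CerfTheret14}, so the proof is expected to be short and to consist of citing \cite{AP} together with this elementary conditioning argument.
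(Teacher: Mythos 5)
Your skeleton is the right one: bound $\wT_G^{\C_{G,M}}(0,x)\le M\,D^{\C_{G,M}}(\w0^{\C_{G,M}},\wx^{\C_{G,M}})$, recognize $\C_{G,M}$ as the infinite cluster of a supercritical Bernoulli percolation of parameter $G([0,M])$, and condition on the (exponentially concentrated) locations $a=\w0^{\C_{G,M}}$, $b=\wx^{\C_{G,M}}$. But there is a real gap in the way you invoke Antal--Pisztora.

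The estimate you quote, $\PP[\,0\leftrightarrow y,\ D^{\C_{G,M}}(0,y)>A_3\|y\|_1\,]\le A_1 e^{-A_2\|y\|_1}$, has tail decay measured in $\|y\|_1$ only. Feeding it through your conditioning argument gives $\PP[\wT_G^{\C_{G,M}}(0,x)>C_3\|x\|_1]\le C_1 e^{-c\|x\|_1}$, which is the claim only at the boundary value $l=C_3\|x\|_1$. The proposition, however, demands $\PP[\wT>l]\le C_1e^{-C_2l}$ uniformly over \emph{all} $l\ge C_3\|x\|_1$: for fixed $x$ and $l\to\infty$ your bound is a constant $e^{-c\|x\|_1}$ that does not decay, and you cannot turn $e^{-c\|x\|_1}$ into $e^{-C_2l}$ via $\|x\|_1\le l/C_3$ because that inequality goes the wrong way. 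Rescaling the good event to $\{\|a\|_1\le \eta l\}\cap\{\|b-x\|_1\le\eta l\}$ does make the complementary event $O(e^{-c'\eta l})$, but on the good event each Antal--Pisztora term is still only $e^{-A_2\|a'-b'\|_1}$ with $\|a'-b'\|_1$ possibly of order $1$, and the sum over the $O(l^{2d})$ admissible pairs then contributes nothing. What is actually needed is the threshold-decaying form of the chemical-distance estimate: there exist $\rho,A_1',A_2'>0$ with $\PP[\,t< D^{\C_{G,M}}(0,y)<\infty\,]\le A_1'e^{-A_2' t}$ for all $t\ge\rho\|y\|_1$. This is a genuinely stronger statement than the one you cite; it follows from the block renormalization underlying \cite{AP} (and is stated explicitly in, e.g., \cite{GaretMarchand04}), but it is not an immediate consequence of your version --- the obvious chaining through an intermediate vertex $z$ fails because one needs $z\in\C_{G,M}$, which fails with constant probability. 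With the threshold-decaying estimate in place of the one you quoted, your conditioning argument does deliver the proposition.
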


We denote by $\C_{G,\infty}$ the a.s. unique infinite cluster of the percolation obtained by keeping only edges with finite passage time, \ie the percolation $(\ind{\{ t_G(e) < \infty \}}, e\in \EE^d)$. Property \ref{propmoments} implies in particular that the times $\wT_G^{\C_{G,M}} (0,x)$ are integrable. A classical application of a subadditive ergodic theorem gives the existence of a time constant:

\begin{prop}[Convergence to the time constant, \cite{CerfTheret14}]
\label{thmcv}
Let $G$ be a probability measure on $[0,+\infty]$ such that $G([0,+\infty))>p_c(d)$. There exists a deterministic function $\mu_G : \Zd \rightarrow [0,+\infty)$ such that for every $M\in \RRp$ satisfying $G([0,M])> p_c(d)$, we have the following properties:
\begin{align}
& \forall x\in \ZZ^d \quad \mu_G (x) \,=\, \inf_{n\in \NN^*} \frac{\EE \left[\wT_G^{\C_{G,M}} (0,nx) \right] }{n} 
\,=\, \lim_{n\to +\infty} \frac{\EE \left[\wT_G^{\C_{G,M}} (0,nx) \right] }{n} \, , \label{thmcv-def}\\
& \forall x\in \ZZ^d \quad \lim_{n\rightarrow \infty} \frac{\wT_G^{\C_{G,M}} (0,nx)}{n} \,=\, \mu_G (x)  \quad \textrm{a.s. and in }L^1, \label{thmcv-ps}\\
& \forall x \in \ZZ^d \quad \lim_{n\rightarrow \infty} \frac{\wT_G^{\C_{G,\infty}} (0,nx)}{n} \,=\, \mu_G (x)   \quad \textrm{in probability} \,, \label{thmcv-proba}\\
& \forall x \in \ZZ^d \quad \lim_{n\rightarrow \infty} \frac{T_G (0,nx)}{n} \,=\, \theta_G^2 \delta_{\mu_G (x)} + (1-\theta_G^2) \delta_{+\infty}  \quad \textrm{in distribution}, \label{thmcv-loi}
\end{align}
where $\theta_G = \PP[0 \in \C_{G,\infty}]$.
\end{prop}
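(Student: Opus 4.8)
The plan is to identify the regularised times along a fixed direction with an \emph{exactly} subadditive stochastic process, apply Kingman's subadditive ergodic theorem to obtain \eqref{thmcv-def} and \eqref{thmcv-ps}, and then propagate the convergence first to $\wT_G^{\C_{G,\infty}}$ and then to the genuine (possibly infinite) time $T_G$ by elementary comparison estimates based on Proposition~\ref{propmoments}.

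Fix $x\in\ZZ^d$ and $M$ with $G([0,M])>p_c(d)$, and for $0\le m<n$ set $X_{m,n}=T_G(\widetilde{mx}^{\C_{G,M}},\widetilde{nx}^{\C_{G,M}})$. Since $T_G$ is a genuine pseudo-metric on $\ZZ^d$ and the point attached to $mx$ is literally the same in $X_{0,m}$ and in $X_{m,n}$, the triangle inequality gives $X_{0,n}\le X_{0,m}+X_{m,n}$. The construction $\omega\mapsto\C_{G,M}(\omega)$, and hence $y\mapsto\widetilde{y}^{\C_{G,M}}$, commutes with lattice translations, so the integer shift $\tau_x$ of the i.i.d.\ environment satisfies $X_{m+1,n+1}=X_{m,n}\circ\tau_x$; since $\tau_x$ preserves $\PP$ and is ergodic (for $x\neq0$; the case $x=0$ is trivial), the sequences $(X_{nk,(n+1)k})_{n\ge0}$ are stationary ergodic and the law of $(X_{m,m+k})_{k\ge1}$ does not depend on $m$. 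Finally $0\le X_{0,1}$ and $\E[X_{0,1}]\le M\,\E\bigl[D^{\C_{G,M}}(\widetilde 0^{\C_{G,M}},\widetilde x^{\C_{G,M}})\bigr]<\infty$ by Proposition~\ref{propmoments} (equivalently by the Antal--Pisztora chemical-distance estimates~\cite{AP}). Kingman's theorem then produces a constant $\mu_G^M(x)$, necessarily in $[0,+\infty)$ since $\E[X_{0,1}]<\infty$, with $X_{0,n}/n\to\mu_G^M(x)$ a.s.\ and in $L^1$, and $\mu_G^M(x)=\inf_{n}\E[X_{0,n}]/n=\lim_n\E[X_{0,n}]/n$.

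To remove the dependence on $M$, note that for $M\le M'$ one has $\C_{G,M}\subseteq\C_{G,M'}\subseteq\C_{G,\infty}$, so all the regularised anchors of a given vertex lie in $\C_{G,\infty}$ and, by the triangle inequality,
\[
\bigl|\wT_G^{\C_{G,M}}(0,nx)-\wT_G^{\C_{G,M'}}(0,nx)\bigr|\le T_G\bigl(\widetilde 0^{\C_{G,M}},\widetilde 0^{\C_{G,M'}}\bigr)+T_G\bigl(\widetilde{nx}^{\C_{G,M}},\widetilde{nx}^{\C_{G,M'}}\bigr).
\]
By translation invariance the two terms on the right have the same (a.s.\ finite, $n$-independent) law as $T_G(\widetilde 0^{\C_{G,M}},\widetilde 0^{\C_{G,M'}})$; dividing by $n$ and letting $n\to\infty$ gives $\mu_G^M(x)=\mu_G^{M'}(x)=:\mu_G(x)$, which establishes \eqref{thmcv-def} and \eqref{thmcv-ps}. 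Running the identical comparison with $\C_{G,\infty}$ in place of $\C_{G,M'}$ (the error terms again have a fixed a.s.\ finite law, hence are $o(n)$ in probability) upgrades the a.s.\ convergence of $\wT_G^{\C_{G,M}}(0,nx)/n$ to the convergence in probability of $\wT_G^{\C_{G,\infty}}(0,nx)/n$ towards $\mu_G(x)$, proving \eqref{thmcv-proba}. For \eqref{thmcv-loi}, on the event $\{0\in\C_{G,\infty},\ nx\in\C_{G,\infty}\}$ the regularised anchors are $0$ and $nx$ themselves, so $T_G(0,nx)=\wT_G^{\C_{G,\infty}}(0,nx)$ and \eqref{thmcv-proba} gives $T_G(0,nx)/n\to\mu_G(x)$ there; this event has probability tending to $\theta_G^2$ by the mixing property of the i.i.d.\ percolation, while on its complement $T_G(0,nx)=+\infty$ up to a set of probability $o(1)$ (distinct finite clusters cannot be joined, and the probability that $0$ and $nx$ share a common finite cluster vanishes). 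Combining these gives the stated weak convergence.

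The main obstacle is the bookkeeping hidden in the comparison steps: one must control, uniformly in $n$, that the regularised anchors for $\C_{G,M}$, $\C_{G,M'}$ and $\C_{G,\infty}$ are close (holes in a supercritical cluster are logarithmically small) and, more importantly, that they can be joined to one another by paths whose $T_G$-length has a \emph{fixed} finite law — this is exactly where Proposition~\ref{propmoments} and the Antal--Pisztora estimates enter, and where the absence of any integrability assumption on $G|_{\RRp}$ is absorbed by always routing through a sub-cluster $\C_{G,M}$ with uniformly bounded edge weights before paying the finite, $n$-independent cost of leaving it. The remaining delicate point is checking the hypotheses of the subadditive ergodic theorem, in particular the covariance $X_{m+1,n+1}=X_{m,n}\circ\tau_x$ for an a priori cluster-dependent regularisation; this is essentially a matter of unwinding definitions.
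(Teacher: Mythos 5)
Your proposal is correct and follows exactly the line indicated by the paper, which presents Proposition~\ref{thmcv} as the combination of the integrability estimate of Proposition~\ref{propmoments} with ``a classical application of a subadditive ergodic theorem''. Your identification $X_{m,n}=T_G(\widetilde{mx}^{\C_{G,M}},\widetilde{nx}^{\C_{G,M}})$ gives genuine subadditivity (shared anchor at $mx$), the translation covariance $X_{m+1,n+1}=X_{m,n}\circ\tau_x$ is exactly right because $\C_{G,M}(\tau_x\omega)=\C_{G,M}(\omega)-x$ so $\widetilde y^{\C_{G,M}}(\tau_x\omega)=\widetilde{y+x}^{\C_{G,M}}(\omega)-x$, and Kingman (with ergodicity of $\tau_x$, $x\neq 0$) delivers \eqref{thmcv-def} and \eqref{thmcv-ps}. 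The $M$-independence, the downgrade to convergence in probability for $\C_{G,\infty}$ (since the anchor-swap error only has a fixed a.s.\ finite but a priori non-integrable law), and the distributional limit obtained by conditioning on $\{0\in\C_{G,\infty},\,nx\in\C_{G,\infty}\}$ are all standard and correctly executed; you even flag the two genuinely delicate points (covariance of the cluster-dependent regularisation, lack of moments for the anchor-swap cost), which are precisely the places where routing through $\C_{G,M}$ is essential. Nothing is missing.
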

Note that even if the definition \eqref{thmcv-def}  of the time constants $\mu_G(x)$ requires to introduce a parameter $M$ in the definition of the regularized passage times $\wT_G^{\C_{G,M}}(0,nx)$, these time constants $\mu_G(x)$ do not depend on $M$. Note also that if instead of taking the $\wx^{\C_{G,M}}$  in the infinite cluster $\C_{G,M}$ of edges with passage time less than $M$, we take the $\wx^{\C_{G,\infty}}$  in the infinite cluster $\C_{G,\infty}$ of edges with finite passage time, the almost sure convergence is weakened into the convergence in probability \eqref{thmcv-proba}. Without any regularization, the convergence in \eqref{thmcv-loi} is only in law.

As in the classical first-passage percolation model, the function $\mu_G$ can be extended, by homogeneity, into a pseudo-norm on $\Rd$ (the only possibly missing property of $\mu_G$ is the strict positivity):
\begin{prop}[Positivity of $\mu_G$, \cite{CerfTheret14}]
\label{thmpropmu}
Let $G$ be a probability measure on $[0,+\infty]$ such that $G([0,+\infty))>p_c(d)$.
Then either $\mu_G$ is identically equal to $0$ or $\mu_G ( x) >0$ for all $x\neq 0$, and we know that
$$ \mu_G = 0 \, \iff \, G(\{0\}) \geq p_c(d) \,.$$
\end{prop}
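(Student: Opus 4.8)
The plan is to reduce the whole statement to the scalar equivalence $\mu_G(e_1)=0\iff G(\{0\})\geq p_c(d)$ (with $e_1=(1,0,\dots,0)$), using as a first, purely deterministic step the fact that $\mu_G$ vanishes at one nonzero point of $\Rd$ if and only if it vanishes identically. For this step I would use that, by the discussion preceding the statement, $\mu_G$ extends to a pseudo-norm on $\Rd$ (hence is even and subadditive), and that it is invariant under the coordinate permutations and sign changes of $\Zd$ because $(t_G(e))_{e\in\EE^d}$ is i.i.d.; so if $\mu_G(x)=0$ for some $x\in\Rd\setminus\{0\}$, picking a coordinate $i$ with $x_i\neq 0$ and writing $\sigma_i$ for the reflection of that coordinate gives $\mu_G(2x_ie_i)=\mu_G(x-\sigma_ix)\leq\mu_G(x)+\mu_G(\sigma_ix)=0$, hence $\mu_G(e_i)=0$, hence $\mu_G(e_1)=0$ by symmetry, hence $\mu_G(y)\leq\sum_j|y_j|\,\mu_G(e_j)=0$ for all $y\in\Rd$. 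After this reduction only the scalar equivalence remains.

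For the implication $G(\{0\})\geq p_c(d)\Rightarrow\mu_G(e_1)=0$, if $G(\{0\})>p_c(d)$ one takes $M=0$ in Proposition~\ref{thmcv} (legitimate since $G([0,0])=G(\{0\})>p_c(d)$), so that $\C_{G,0}$ is the infinite cluster of the edges of passage time $0$; the bound $\wT_G^{\C_{G,M}}\leq M\,D^{\C_{G,M}}$ recalled above then gives $\wT_G^{\C_{G,0}}(0,ne_1)=0$ for every $n$, whence $\mu_G(e_1)=\lim_n\wT_G^{\C_{G,0}}(0,ne_1)/n=0$. The case $G(\{0\})=p_c(d)$ is the core difficulty: here $G((0,+\infty))>0$, so some $M<\infty$ still satisfies $G([0,M])>p_c(d)$ and $\mu_G$ is well defined, but the zero-time edges form a \emph{critical} percolation and in general there is no infinite zero-time cluster through which to route a path for free. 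I would treat this case by Kesten's renormalization argument at criticality (see \cite{Kesten:StFlour}): at $p=p_c(d)$ the zero-time two-point connectivity does not decay exponentially (a consequence of the definition of $p_c$), and combining this with concentration of $\wT_G^{\C_{G,M}}(0,ne_1)$ around its mean through a coarse-graining over a geometric hierarchy of scales one shows $\EE[\wT_G^{\C_{G,M}}(0,ne_1)]=o(n)$, hence $\mu_G(e_1)=0$. \emph{This critical-scale renormalization is the step I expect to be the main obstacle}; the rest is routine.

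For the converse, $G(\{0\})<p_c(d)\Rightarrow\mu_G(x)>0$ for every $x\neq 0$: since $G([0,\eps))\downarrow G(\{0\})$ as $\eps\downarrow 0$, fix $\eps>0$ with $G([0,\eps))<p_c(d)$ and call an edge \emph{fast} when $t_G(e)<\eps$, so that the fast edges form a \emph{subcritical} percolation. Any path $r$ satisfies $T_G(r)\geq\eps\bigl|\{e\in r:\ t_G(e)\geq\eps\}\bigr|$, so for $M$ with $G([0,M])>p_c(d)$,
\[
\wT_G^{\C_{G,M}}(0,nx)=T_G(\w0,\wnx)\geq\eps\,N_\eps(\w0,\wnx),
\]
where $N_\eps(x,y)$ is the minimal number of non-fast edges on a path from $x$ to $y$; equivalently $N_\eps$ is the first-passage time for the i.i.d.\ weights of law $G^{\mathrm{Ber}}=G([\eps,+\infty])\,\delta_1+G([0,\eps))\,\delta_0$, for which $G^{\mathrm{Ber}}(\{0\})<p_c(d)$. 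Because $\|\w0\|_1$ and $\|\wnx-nx\|_1$ have exponentially small tails, $N_\eps(\w0,\wnx)=N_\eps(0,nx)+o(n)$ almost surely, and $N_\eps(0,nx)/n\to\mu_{G^{\mathrm{Ber}}}(x)$ almost surely by classical first-passage percolation; moreover $\mu_{G^{\mathrm{Ber}}}(x)>0$ by Kesten's theorem \cite{kesten1986aspects,Kesten:StFlour}, whose proof rests on the estimate that in subcritical percolation every path from $0$ reaching $\ell^1$-distance $n$ uses at least $an$ non-fast edges with probability $1-O(e^{-cn})$ (compare Lemma~\ref{lem:rightint}). Hence, using Proposition~\ref{thmcv} for the existence of the limit, $\mu_G(x)=\lim_n\wT_G^{\C_{G,M}}(0,nx)/n\geq\eps\,\mu_{G^{\mathrm{Ber}}}(x)>0$. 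Together with the reduction step this establishes the equivalence and therefore both halves of the proposition.
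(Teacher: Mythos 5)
The paper does not prove this proposition: it is imported from \cite{CerfTheret14}, so there is no internal proof to compare against, and the remarks below assess your argument on its own. Your reduction step (if $\mu_G$ vanishes at one nonzero point it vanishes identically, via evenness, subadditivity and the lattice symmetries) is correct. So is your treatment of $G(\{0\})<p_c(d)$: lower-bounding $\wT_G^{\C_{G,M}}$ by $\eps$ times the number of non-fast edges, recognizing that quantity as the passage time for the bounded law $G^{\mathrm{Ber}}$ with $G^{\mathrm{Ber}}(\{0\})<p_c(d)$, and invoking Kesten's positivity theorem for bounded i.i.d. FPP is the standard route, and it transfers cleanly to the present setting precisely because $G^{\mathrm{Ber}}$ has bounded support. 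The subcase $G(\{0\})>p_c(d)$ is also fine: $M=0\in\RRp$ is an admissible choice in Proposition~\ref{thmcv} (the hypothesis $G([0,0])>p_c(d)$ holds), and then $\wT_G^{\C_{G,0}}(0,ne_1)\equiv 0$.

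The genuine gap is the critical case $G(\{0\})=p_c(d)$, which you yourself flag as the main obstacle and leave as an unverified sketch. The chain you propose -- from non-exponential decay of the critical two-point function, through ``coarse-graining over a geometric hierarchy of scales,'' to $\EE[\wT_G^{\C_{G,M}}(0,ne_1)]=o(n)$ -- is not a recognizable argument and I do not see how to run it: at $p_c(d)$ the crossing probability of a large box is not bounded away from zero (it may tend to $0$ when $\theta(p_c)=0$), so the renormalization scheme does not produce a supercritical block process along which one could route paths using few positive-time edges, and the bare non-exponential decay of $\PP_{p_c}(0\leftrightarrow x)$ does not by itself give a macroscopic fraction of zero-time edges on any path. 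The argument that actually works in the bounded case (Kesten~\cite{Kesten:StFlour}, Theorem~6.1) is a contrapositive: if $\mu_G(e_1)>0$, a large-deviation estimate of the type of Proposition~5.8 there shows that the probability of joining $0$ to distance $n$ by a $t_G$-zero path decays exponentially in $n$, and the sharp-threshold theorem of Menshikov and of Aizenman--Barsky then forces $G(\{0\})<p_c(d)$. Carrying this over to laws on $[0,+\infty]$, where only the regularized $\wT$ is integrable, takes real care and is exactly what the paper delegates to \cite{CerfTheret14}. Without that step (or a precise citation of where it is carried out) the stated equivalence is not established.
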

Proposition \ref{propmoments} gives strong enough integrability properties of $\wT_G^{\C_{G,M}} (0,x)$ to ensure that the convergence to the time constants is uniform in the direction:

\begin{prop}[Uniform convergence, \cite{CerfTheret14}]
Let $G$ be a probability measure on $[0,+\infty]$ such that $G([0,+\infty))>p_c(d)$.
Then for every $M\in \RRp$ such that $G([0,M])> p_c(d)$, we have
$$ \lim_{n\rightarrow \infty} \,\, \sup_{x\in \ZZ^d\,,\, \|x\|_1 \geq n} \left| \frac{\wT_G^{\C_{G,M}} (0,x) - \mu_G (x)}{\|x\|_1} \right| \,=\, 0 \quad \textrm{a.s.}$$
\end{prop}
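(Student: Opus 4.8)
The plan is to upgrade the directional almost sure convergence of Proposition~\ref{thmcv} to a convergence that is uniform in the direction, by approximating an arbitrary $x$ by a scalar multiple of one of finitely many fixed directions and controlling the resulting ``reconnection'' errors with the exponential tail bound of Proposition~\ref{propmoments} and Borel--Cantelli. Fix once and for all $M$ with $G([0,M])>p_c(d)$ and write $\wT:=\wT_G^{\C_{G,M}}$; the two structural facts I will use are that $\wT$ is subadditive, $\wT(x,z)\le\wT(x,y)+\wT(y,z)$, and that $\wT(x+v,y+v)\overset{d}{=}\wT(x,y)$ for $v\in\ZZ^d$, by translation invariance of the i.i.d.\ field.

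First I would record a Lipschitz bound on $\mu_G$. Integrating the tail estimate of Proposition~\ref{propmoments} gives $\E[\wT(0,x)]\le\kappa\|x\|_1$ for some $\kappa=\kappa(G,M)$ and all $x\in\ZZ^d$, hence $\mu_G(x)\le\kappa\|x\|_1$ by \eqref{thmcv-def}, and since $\mu_G$ extends to a pseudo-norm, $|\mu_G(x)-\mu_G(y)|\le\kappa\|x-y\|_1$. Next I fix $\varepsilon>0$ and choose an integer $L\ge d/\varepsilon$, so that every unit vector of $(\R^d,\|\cdot\|_1)$ lies within $\ell^1$-distance $\varepsilon$ of some $y/L$ with $y$ in the finite set $\mathcal D_L=\{y\in\ZZ^d:\|y\|_1=L\}$. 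Applying the directional a.s.\ convergence of Proposition~\ref{thmcv} to the finitely many $y\in\mathcal D_L$ yields an a.s.\ finite random $N_0$ with $|\wT(0,ny)-n\mu_G(y)|\le\varepsilon nL$ for all $n\ge N_0$ and all $y\in\mathcal D_L$.

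The core step is the uniform control of the remainders. Set $\eta=2C_3\varepsilon$ with $C_3$ as in Proposition~\ref{propmoments}, and for $m\in\NN$ put $n=\lfloor m/L\rfloor$. The number of pairs $(x,y)$ with $\|x\|_1=m$, $y\in\mathcal D_L$ and $\|x/m-y/L\|_1\le\varepsilon$ is $O(m^{d-1})$, and for $m\ge L/\varepsilon$ such a pair satisfies $\|x-ny\|_1\le\varepsilon m+L\le 2\varepsilon m$. Since $\wT(ny,x)\overset{d}{=}\wT(0,x-ny)$ and $\eta m\ge C_3\|x-ny\|_1$, Proposition~\ref{propmoments} gives $\P[\wT(ny,x)>\eta m]\le C_1e^{-C_2\eta m}$, and likewise for $\wT(x,ny)$. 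Summing the $O(m^{d-1})$ terms over $m$ yields a convergent series, so Borel--Cantelli provides an a.s.\ finite random $N_1$ with $\max(\wT(ny,x),\wT(x,ny))\le\eta m$ for all $m\ge N_1$, all $\|x\|_1=m$ and all admissible $y$. On the a.s.\ event $\{N_0<\infty,N_1<\infty\}$, for $\|x\|_1=m$ large I pick such a $y$ and apply subadditivity in both directions together with the directional estimate, the homogeneity $\mu_G(ny)=n\mu_G(y)$, and the bound $|n\mu_G(y)-\mu_G(x)|\le\kappa\|x-ny\|_1\le 2\kappa\varepsilon m$ from the first step, to obtain
\[
\bigl|\wT(0,x)-\mu_G(x)\bigr|\ \le\ (2\kappa+1+2C_3)\,\varepsilon\,m .
\]
Thus $\sup_{\|x\|_1\ge m}|\wT(0,x)-\mu_G(x)|/\|x\|_1\le(2\kappa+1+2C_3)\varepsilon$ for all large $m$; letting $\varepsilon$ run through a sequence decreasing to $0$ and intersecting the corresponding a.s.\ events gives the statement.

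The main obstacle is exactly the uniform-in-direction control of the reconnection terms $\wT(ny,x)$: directional a.s.\ convergence by itself is not enough, and one genuinely needs the exponential tail bound of Proposition~\ref{propmoments} in order for a union bound over the $\Theta(m^{d-1})$ lattice points of $\ell^1$-norm $m$ to remain summable. Everything else is the standard triangle-inequality bookkeeping familiar from shape theorems.
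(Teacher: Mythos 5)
The paper does not actually prove this proposition: it is quoted directly from Cerf--Th\'eret \cite{CerfTheret14} and used as a black box, so there is no ``paper's own proof'' to compare against. Your argument is, however, a correct self-contained derivation from the two other quoted inputs (the exponential moment bound of Proposition~\ref{propmoments} and the directional a.s.\ convergence in Proposition~\ref{thmcv}), and it follows the standard shape-theorem scheme that Cerf--Th\'eret themselves use: deduce a Lipschitz bound $\mu_G(\cdot)\le\kappa\|\cdot\|_1$ by integrating the tail, discretize directions via the finite $\ell^1$-sphere $\mathcal D_L$, apply the directional a.s.\ limit on $\mathcal D_L$, control the $O(m^{d-1})$ reconnection terms $\wT(ny,x)$ with $\|x-ny\|_1\le 2\varepsilon m$ by the tail bound plus Borel--Cantelli, and assemble everything with subadditivity of $\wT$ and the triangle inequality for $\mu_G$. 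I checked the bookkeeping: the choice $\eta=2C_3\varepsilon$ does make $\eta m\ge C_3\|x-ny\|_1$ so Proposition~\ref{propmoments} applies; the series $\sum_m m^{d-1}e^{-C_2\eta m}$ converges; and the final estimate $|\wT(0,x)-\mu_G(x)|\le(2\kappa+1+2C_3)\varepsilon m$ follows from both one-sided subadditive inequalities together with $|\mu_G(ny)-\mu_G(x)|\le\kappa\|ny-x\|_1$. The only implicit points worth making explicit are (a) the tie-breaking rule defining $\wx^{\C}$ must be chosen translation-equivariant so that $\wT(ny,x)\overset{d}{=}\wT(0,x-ny)$ really holds, and (b) the Lipschitz bound on $\mu_G$ needs $\|x\|_1\ge 1$ to absorb the additive constant $C_1/C_2$ coming from the tail integral; both are minor and standard.
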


When $\mu_G>0$, this uniform convergence is equivalent to the so called {\em shape theorem}, that we briefly present now. We define $B_{G,t}$ (resp. $\widetilde{B}_{G,t}^{\C_{G,M}}$, $\widetilde{B}_{G,t}^{\C_{G,\infty}} $) as the set of all points reached from the origin within a time $t$ :
$$ B_{G,t} \,=\, \{ z\in \ZZ^d \,:\,T_G (0,z) \leq t \} \,,$$
(resp. $\wT_G^{\C_{G,M}}, \wT_G^{\C_{G,\infty}}$), and when $\mu_G$ is a norm we denote by $\B_{\mu_G}$ its closed unit ball. 
Roughly speaking, the shape theorem states that the rescaled set $B_{G,t}/t$ (respectively  $\widetilde{B}_{G,t}^{\C_{G,M}}/t$, $\widetilde{B}_{G,t}^{\C_{G,\infty}}/t $) converges  towards $\B_{\mu_G}$. The convergence holds in a sense that depends on the regularity of times considered (see \cite{CerfTheret14} for more precise results).


\subsection{Coupling}
\label{seccoupling}
To understand how $\mu_G$ depends on $G$, it is useful to consider passage times $(t_G(e))$ with common distribution $G$, that also have good coupling properties. For any probability measure $G$ on $[0,+\infty]$, we denote by $\fG$ the function 
\begin{align*}
\fG :  \RRp & \to [0,1] \\
 t & \mapsto  G([t, +\infty]),
\end{align*}
which characterizes $G$. For two probability measures $G_1, G_2$  on $[0,+\infty]$, we define the following stochastic domination relation:
$$G_1 \gestoch G_2 \quad \Leftrightarrow \quad  \forall t \in \RRp \quad  \fG_1(t)\geq \fG_2(t).$$ 
This is to have this equivalence that we choose to characterize a probability measure $G$ by $\fG$ instead of the more standard distribution function $t\mapsto G([0,t])$.

Given a probability measure $G$ on $[0,+ \infty ]$, we define the two following pseudo-inverse functions for $\fG$:
\begin{align*}
 \forall t \in [0,1]\,,\,\, & \hat \fG (t) \,=\, \sup \{ s \in \RRp \,:\, \fG(s) \geq 1-t \}   \textrm{ and } \\ & \tilde \fG (t) \,=\, \sup \{ s \in \RRp \,:\, \fG(s) > 1-t \}\,.
 \end{align*}
These pseudo-inverse functions can be used to simulate random variable with distribution $G$:
\begin{lemme}
\label{lemcoup}
Let $U$ be a random variable with uniform law on $(0,1)$. 
If $G$ is a probability measure on $[0,+ \infty ]$, then $\hat \fG(U)$ and $\tilde \fG (U)$ are random variables taking values in $[0,+\infty ]$ with distribution $G$, and $\tilde \fG(U)=\hat \fG (U)$ a.s.
\end{lemme}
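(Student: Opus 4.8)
The plan is to verify the three claimed properties in turn, keeping in mind that $\fG$ is non-increasing and right-continuous (it is a tail function of a probability measure on $[0,+\infty]$), that $\fG(0)=1$, and that $\fG(s)\to G(\{+\infty\})$ as $s\to+\infty$. First I would record the basic ``Galois-type'' equivalences relating $\fG$ to its pseudo-inverses. For $\tilde\fG$, the key fact is
\[
\tilde\fG(t) > s \quad\Longleftrightarrow\quad \fG(s) > 1-t,
\]
valid for all $s\in\RRp$ and $t\in(0,1)$: the implication ``$\Leftarrow$'' is immediate from the definition of the supremum, and ``$\Rightarrow$'' follows because if $\tilde\fG(t)>s$ there is some $s'>s$ with $\fG(s')>1-t$, whence $\fG(s)\ge \fG(s')>1-t$ by monotonicity. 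Symmetrically, for $\hat\fG$ one gets, using right-continuity of $\fG$,
\[
\hat\fG(t) \le s \quad\Longleftrightarrow\quad \fG(s') < 1-t \ \text{ for all } s'>s,
\]
equivalently $\hat\fG(t)\ge s \Leftrightarrow \fG(s')\ge 1-t$ for all $s'<s$. These are the only non-trivial ingredients; everything else is bookkeeping.

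Next I would identify the law of $\tilde\fG(U)$. Using the first equivalence with $U$ uniform on $(0,1)$,
\[
\PP[\tilde\fG(U) > s] \,=\, \PP[\,U > 1-\fG(s)\,] \,=\, \fG(s) \,=\, G([s,+\infty])
\]
for every $s\in\RRp$; since $\fG$ characterizes $G$ and $\tilde\fG(U)\in[0,+\infty]$, this shows $\tilde\fG(U)$ has distribution $G$. For $\hat\fG(U)$ I would argue that $\hat\fG$ and $\tilde\fG$ differ only on an at most countable set: $\hat\fG(t)=\tilde\fG(t)$ whenever $t\in(0,1)$ is such that $1-t$ is not a value taken by $\fG$ on an interval of constancy — more precisely, the set $\{t: \hat\fG(t)\neq\tilde\fG(t)\}$ is contained in $\{1-\fG(s): s \text{ is the left endpoint of a nondegenerate flat of }\fG\}$, which is countable because disjoint nondegenerate flats of a monotone function are at most countably many. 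Since $U$ has a diffuse law, $\PP[\hat\fG(U)\neq\tilde\fG(U)]=0$, which simultaneously gives $\hat\fG(U)=\tilde\fG(U)$ a.s.\ and, transporting the distribution computation, that $\hat\fG(U)$ also has law $G$.

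I do not anticipate a serious obstacle here; the only point requiring care is the handling of $+\infty$ and the degenerate cases. If $G(\{+\infty\})>0$ then $\fG(s)\ge G(\{+\infty\})>0$ for all $s$, so for $t$ close to $0$ one has $\fG(s)>1-t$ for all $s\in\RRp$ and hence $\tilde\fG(t)=\sup\RRp=+\infty$; the event $\{\tilde\fG(U)=+\infty\}=\{U>1-G(\{+\infty\})\}$ has probability $G(\{+\infty\})$, as it must. Similarly $\tilde\fG(t)=0$ exactly when $\fG(s)\le 1-t$ for all $s>0$, i.e.\ when $G(\{0\})\ge t$, giving $\PP[\tilde\fG(U)=0]=G(\{0\})$. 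So the stated supremum conventions are exactly what make the identity $\PP[\tilde\fG(U)>s]=\fG(s)$ hold on all of $\RRp$, including the boundary behaviour, and the lemma follows. The mildly delicate step — and the one I would write out most carefully — is the countability argument showing $\hat\fG(U)=\tilde\fG(U)$ a.s., since it is what lets us pass all conclusions from $\tilde\fG$ to $\hat\fG$ at once.
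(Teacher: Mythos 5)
Your overall plan mirrors the paper's proof (compute the law of one pseudo-inverse evaluated at $U$, then transfer to the other via the a.s.\ equality coming from countability of the ``flats''), but you have made an error in the continuity convention for $\fG$ and this propagates into a false equivalence. Since $\fG(t)=G([t,+\infty])$ and $[t_n,+\infty]\downarrow[t,+\infty]$ as $t_n\uparrow t$, the function $\fG$ is \emph{left}-continuous, not right-continuous as you assert. Consequently the ``$\Leftarrow$'' direction of your claimed equivalence
$\tilde\fG(t)>s\iff\fG(s)>1-t$
fails: $\fG(s)>1-t$ only gives $s\in\{s':\fG(s')>1-t\}$, hence $\tilde\fG(t)\geq s$, not strict inequality. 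Take $G=\delta_1$: $\fG(1)=1>1-t$ for all $t\in(0,1)$, yet $\tilde\fG(t)=1$, so $\tilde\fG(t)>1$ is false. The correct version is $\tilde\fG(t)>s\iff\fG(s^+)>1-t$, where $\fG(s^+)=\lim_{s'\downarrow s}\fG(s')=G((s,+\infty])$.

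This matters for the distribution computation: a random variable $X$ with law $G$ satisfies $\PP[X>s]=G((s,+\infty])=\fG(s^+)$, not $\fG(s)=G([s,+\infty])$, so the identity $\PP[\tilde\fG(U)>s]=\fG(s)$ you claim would actually contradict $\tilde\fG(U)\sim G$ whenever $G$ has an atom at $s$. With the corrected equivalence one does get $\PP[\tilde\fG(U)>s]=\PP[U>1-\fG(s^+)]=\fG(s^+)=G((s,+\infty])$, which is what you need, and the rest of your argument (countability of the set where $\hat\fG\neq\tilde\fG$, hence a.s.\ equality under a diffuse $U$) goes through exactly as in the paper. Alternatively, you could do what the paper does and start from $\hat\fG$, for which the clean equivalence $\hat\fG(t)\geq s\iff\fG(s)\geq 1-t$ \emph{does} hold precisely because $\fG$ is left-continuous: this is the pseudo-inverse whose Galois relation is compatible with the continuity the tail function $\fG$ actually has.
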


\begin{proof}
The function $\hat \fG$ has the following property
\begin{equation}
\label{eqF-1}
 \forall t\in [0,1] \,,\,\, \forall s \in \RRp  \,, \quad \hat \fG(t) \geq s \iff \fG(s) \geq 1-t \,.
 \end{equation}
Then for all $s \in \RRp$, we have $\PP [\hat \fG(U) \geq s] = \PP [U \geq 1- \fG(s)] = \fG(s)$, thus $\hat \fG (U)$ has distribution $G$. The function $\tilde \fG$ does not satisfy the property (\ref{eqF-1}). However, $\hat \fG(t) \neq \tilde \fG (t)$ only for $t\in [0,1]$ such that $ \fG^{-1} (\{1-t\})$ contains an open interval, thus the set $\{ t \in [0,1] \,:\, \hat \fG(t) \neq \tilde \fG (t) \}$ is at most countable. This implies that $\hat \fG(U) = \tilde \fG (U)$ a.s., thus $\tilde \fG (U)$ has the same law as $\hat \fG(U)$.  
\end{proof}

In the following, we will always build the passage times of the edges with this lemma. Let then $(u(e), e \in \EE^d)$ be a family of i.i.d. random variables with uniform law on $(0,1)$. For any given probability measure $G$ on $[0,+\infty]$, the family of i.i.d passage times with distribution $G$ will always be 
\begin{equation}
\label{eqdeftF}
\forall e \in \EE^d\,,  \quad  t_G(e) = \hat \fG (u(e)) \,.
\end{equation}
Of course the main interest of this construction is to obtain couplings between laws: if $G_1$ and $G_2$ are probability measures on $[0,+\infty]$, 
$$G_1 \lestoch G_2 \quad \Rightarrow \quad t_{G_1} (e) \leq t_{G_2}(e) \text{ for all edges $e$}.$$
In particular in the case of Bernoulli percolation, if $p \leq q $, $G_{q} = q \delta_1 + (1-q) \delta_{\infty} \lestoch G_{p} = p \delta_1 + (1-p) \delta_{\infty}$ thus $\C_p \subset \C_q$. Moreover, we have the following pleasant property:
\begin{lemme}
\label{propCV}
Let $G, (G_n)_{n\in \NN}$ be probability measures on $[0,+\infty]$. We define the passage times $t_G(e)$ and $t_{G_n} (e)$ as in equation (\ref{eqdeftF}). If $G_n \overset{d}{\rightarrow} G$, then 
$$  \textrm{a.s.}  \,,\,\forall e\in \EE^d \,, \quad  \lim_{n\rightarrow \infty} t_{G_n} (e) \,=\, t_G (e) \,.$$
\end{lemme}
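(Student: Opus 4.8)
The plan is to reduce the claim to a purely deterministic fact about the pseudo-inverses, namely that $\hat\fG_n(u)\to\hat\fG(u)$ for Lebesgue-almost every $u\in(0,1)$. Indeed, once this is known, since $\EE^d$ is countable and each $u(e)$ is uniform on $(0,1)$, the exceptional set $N\subset(0,1)$ (of Lebesgue measure zero) satisfies $\PP(\exists\,e\in\EE^d:u(e)\in N)\le\sum_{e\in\EE^d}\Leb(N)=0$; hence almost surely $u(e)\notin N$ for every edge $e$, and by the construction \eqref{eqdeftF} this gives $t_{G_n}(e)=\hat\fG_n(u(e))\to\hat\fG(u(e))=t_G(e)$ for all $e$ simultaneously. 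So everything comes down to convergence of ``quantile functions'' under weak convergence, using the equivalent description of $G_n\overset{d}{\rightarrow}G$ recalled above: $\fG_n(t)\to\fG(t)$ at every continuity point $t$ of the non-increasing function $\fG$.

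To prove the deterministic fact, I would fix $u\in(0,1)$, set $a=\hat\fG(u)\in[0,+\infty]$, and establish the two one-sided bounds. For the upper bound (relevant only if $a<+\infty$): for any continuity point $b>a$ of $\fG$, the definition of $a$ as a supremum gives $\fG(b)<1-u$, so $\fG_n(b)\to\fG(b)<1-u$, whence $\fG_n(b)<1-u$ for large $n$; since $\fG_n$ is non-increasing this forces $\hat\fG_n(u)\le b$, and letting $b\downarrow a$ along the (dense) continuity points of $\fG$ yields $\limsup_{n\to\infty}\hat\fG_n(u)\le a$. For the lower bound (relevant only if $a>0$): for every $c\in(0,a)$ there is some $s>c$ with $\fG(s)\ge1-u$, hence $\fG(c)\ge1-u$ by monotonicity; if in addition $\fG$ is not identically equal to $1-u$ on any open subinterval of $(0,a)$, then in fact $\fG(c)>1-u$ for all $c\in(0,a)$, so taking $c$ to be a continuity point of $\fG$ one gets $\fG_n(c)\to\fG(c)>1-u$, hence $\hat\fG_n(u)\ge c$ for large $n$; letting $c\uparrow a$ gives $\liminf_{n\to\infty}\hat\fG_n(u)\ge a$. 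Combining the two bounds, $\hat\fG_n(u)\to\hat\fG(u)$.

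It then only remains to identify the exceptional set: $N$ consists of those $u$ for which $\fG$ is constant equal to $1-u$ on some open interval. Two such intervals at different levels are disjoint, so $N$ is at most countable, hence Lebesgue-null — this is precisely the set appearing in the proof of Lemma~\ref{lemcoup}, where $\hat\fG$ and $\tilde\fG$ can differ. The hard part is really this lower bound on the exceptional values of $u$: when $\fG$ has a flat stretch at level exactly $1-u$, the approximations $\fG_n$ may reach that level from below, so $\hat\fG_n(u)$ need not converge to $\hat\fG(u)$ for that single value — which is exactly why the conclusion is only almost sure and not deterministic. Everything else is bookkeeping with the monotonicity of $\fG,\fG_n$ and the density of continuity points of $\fG$.
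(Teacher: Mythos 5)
Your proof is correct, and it takes a genuinely different route from the paper's. The paper first shows $\hat\fG_n(t)\to\hat\fG(t)$ for \emph{every} $t\in[0,1]$ under the extra assumption $G_n\gestoch G$, and the analogue $\hat\fG_n(t)\to\tilde\fG(t)$ under $G_n\lestoch G$; the general case is then reduced to these two by sandwiching $G_n$ between the measures $\underline G_n$ and $\overline G_n$ with survival functions $\min(\fG,\fG_n)$ and $\max(\fG,\fG_n)$ (both of which converge weakly to $G$), and the a.s.\ equality $\hat\fG(U)=\tilde\fG(U)$ of Lemma~\ref{lemcoup} is what glues the two halves together. You instead prove the two-sided estimate $\hat\fG_n(u)\to\hat\fG(u)$ directly for Lebesgue-a.e.\ $u$, by approaching $a=\hat\fG(u)$ from above and below along continuity points of $\fG$ — this is the classical fact that pseudo-inverses of weakly convergent survival functions converge outside a countable set. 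The exceptional set you isolate (the values $u$ such that $\fG$ is flat at level $1-u$ on an open interval) is precisely the countable set $\{t:\hat\fG(t)\ne\tilde\fG(t)\}$ appearing in the proof of Lemma~\ref{lemcoup}, so the two arguments are clearly cousins; yours is more self-contained and avoids the auxiliary sandwiching measures, while the paper's version yields clean deterministic (pointwise) convergence in the monotone subcases, which fits naturally with the stochastic-comparison viewpoint adopted throughout the rest of the paper.
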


\begin{proof} $(i)$ Let us prove that if $G_n \gestoch G$ for all $n$, then
\begin{align}
\label{limitfr}
\forall t\in [0,1] \,& \lim_{n\rightarrow \infty} \hat \fG_n (t) \,=\, \hat \fG (t) \,. 
\end{align}
Consider $t\in [0,1]$, let $x= \hat \fG (t)$ and $x_n= \hat \fG_n (t)$. Since $G_n \gestoch G$, we have $\fG_n \geq \fG$ thus $x_n \geq x$. Suppose that $\miniop{}{\limsup}{n\rightarrow +\infty} x_n := \overline{x} > x$. Up to extraction, we suppose that $\lim_{n\rightarrow +\infty}x_n = \overline{x}$. Choose $\beta \in (x, \overline{x})$ such that $\fG$ is continuous at $\beta$, thus $\lim_{n\rightarrow \infty} \fG_n (\beta) = \fG (\beta)$. On one hand, by the definition of $\hat \fG$ and the monotonicity of $\fG$, we have $\fG(\beta) <1- t$. On the other hand, $\beta< x_n$ for all $n$ large enough, thus $ \fG_n(\beta) \geq 1-t$ for all $n$ large enough, and we conclude that $\fG(\beta) = \lim_{n\rightarrow \infty} \fG_n(\beta)  \geq 1-t$, which is a contradiction, and~\eqref{limitfr} is proved.

$(ii)$ Similarly, if $G_n \lestoch G$ for all $n$, then
$ \displaystyle \;\forall t\in [0,1] \, \lim_{n\rightarrow \infty} \hat \fG_n (t) \,=\, \tilde \fG (t) \,. $

$(iii)$ We define $\underline{\fG}_n = \min \{\fG, \fG_n \}$ (resp. $\overline{\fG}_n = \max \{\fG, \fG_n\}$), and we denote by $\underline{G}_n$ (resp. $\overline{G}_n$) the corresponding probability measure on $[0,+\infty]$.
Notice that $\overline{G}_n \overset{d}{\rightarrow} G$ and $\underline{G}_n \overset{d}{\rightarrow} G$. Fix an edge $e$.
Then $\overline{G}_n \gestoch G$ for all $n$, and (i) implies that
$$\textrm{a.s.}  \quad  \lim_{n\rightarrow \infty} t_{\overline G_n} (e) \,=\, t_G (e) \,.$$
As $\underline G_n \lestoch G$ for all $n$ and  $t_{G} (e) = \tilde \fG (u(e))$ almost surely, (ii) implies that
$$\textrm{a.s.}   \quad  \lim_{n\rightarrow \infty} t_{\underline G_n} (e) \,=\, t_G (e) \,.$$
Finally, as $\underline G_n \lestoch G_n \lestoch \overline{G}_n$ for all $n$, we know by coupling that
$  t_{\underline G_n} (e) \le t_{G_n}(e) \le t_{\overline G_n} (e), $ which gives the desired convergence.
\end{proof}


\subsection{Stabilization of the point $\wx$ and monotonicity of the time constant}
\label{secstab}

We need to extend the monotonicity of the time constant to first-passage percolation on the infinite cluster of supercritical percolation. Since we work with different probability measures, the fact that, in the regularization process, the point $\wx^{\C_{G,M}}$ depends on $G$ may be disturbing. We get round this problem by considering an alternative probability measure $H$:
\begin{lemme}
\label{propstab}
Let $G$ and $H$ be probability measures on $[0,+\infty]$ such that $G \lestoch H$. For all $M\in \RRp$ satisfying $H([0,M]) > p_c(d)$, we have
$$ \mu_G(x) = \inf_{n\in \NN^*} \frac{\EE \left[\wT_G^{\C_{H,M}} (0,nx) \right] }{n} = \lim_{n\rightarrow \infty} \frac{\wT_G^{\C_{H,M}} (0,nx)}{n}  \; \textrm{a.s. and in } L^1. $$
\end{lemme}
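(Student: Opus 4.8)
The plan is to transfer everything back to the already-understood quantity $\wT_G^{\C_{G,M}}(0,nx)$, whose asymptotics are provided by Proposition~\ref{thmcv}. First, since $G \lestoch H$ we have $t_G(e) \le t_H(e)$ for every edge $e$ under the coupling~\eqref{eqdeftF}, so $\{t_H(e)\le M\}\subset\{t_G(e)\le M\}$; because $H([0,M])>p_c(d)$ forces $G([0,M])>p_c(d)$, both infinite clusters $\C_{H,M}$ and $\C_{G,M}$ exist a.s., and $\C_{H,M}\subseteq\C_{G,M}$. In particular the regularizing vertices $\widetilde 0^{\C_{H,M}}$ and $\widetilde{nx}^{\C_{H,M}}$ all lie in $\C_{G,M}$.

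The heart of the argument is the triangle inequality for the pseudo-metric $T_G$, which gives
\[
\bigl|\wT_G^{\C_{H,M}}(0,nx)-\wT_G^{\C_{G,M}}(0,nx)\bigr|
\ \le\ T_G\bigl(\widetilde 0^{\C_{H,M}},\widetilde 0^{\C_{G,M}}\bigr)+T_G\bigl(\widetilde{nx}^{\C_{H,M}},\widetilde{nx}^{\C_{G,M}}\bigr).
\]
The first term is a fixed, a.s.\ finite random variable (its two endpoints lie in $\C_{G,M}$, which is connected through edges of $G$-passage time $\le M$), hence it is $o(n)$. For the second term, set $R_n = T_G(\widetilde{nx}^{\C_{H,M}},\widetilde{nx}^{\C_{G,M}})$; by translation invariance of $(u(e))_{e}$, $R_n$ has the law of $R_0$, which is a.s.\ finite. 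Moreover both endpoints of $R_0$ lie within $\ell^1$-distance $d_1(0,\C_{H,M})$ of the origin and inside $\C_{G,M}$, so combining the (exponentially small) probability of large holes in a supercritical cluster with the Antal--Pisztora chemical-distance bound (see~\cite{AP}) yields $\E[R_0]<\infty$. Thus $\sum_n\P[R_n>\epsilon n]<\infty$ for every $\epsilon>0$, and Borel--Cantelli gives $R_n/n\to 0$ a.s. Together with~\eqref{thmcv-ps} this proves $\tfrac1n\wT_G^{\C_{H,M}}(0,nx)\to\mu_G(x)$ a.s.

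For the $L^1$ statement, note that since $t_G\le t_H$ and the regularizing points depend only on $\C_{H,M}$, we have $\wT_G^{\C_{H,M}}(0,nx)\le\wT_H^{\C_{H,M}}(0,nx)$, and Proposition~\ref{propmoments} applied to the pair $(H,M)$ gives exponential tail bounds for $\wT_H^{\C_{H,M}}(0,nx)$ that are uniform in $n$ once divided by $n$. This yields uniform integrability of $(\tfrac1n\wT_G^{\C_{H,M}}(0,nx))_{n}$, which combined with the a.s.\ convergence gives $L^1$ convergence, hence also $\tfrac1n\E[\wT_G^{\C_{H,M}}(0,nx)]\to\mu_G(x)$. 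Finally, translation invariance of $(u(e))_{e}$ and the triangle inequality show that $n\mapsto\E[\wT_G^{\C_{H,M}}(0,nx)]$ is finite and subadditive, so Fekete's lemma identifies its normalized limit with its infimum, which we have just computed to be $\mu_G(x)$.

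The step I expect to be the main obstacle is the bound $R_n/n\to 0$ a.s.: it requires coupling the small-holes estimate for supercritical percolation with the Antal--Pisztora bound for the chemical distance inside $\C_{G,M}$, and then checking summability in $n$ so that Borel--Cantelli applies. Everything else reduces to the triangle inequality and the convergence and moment results already quoted from~\cite{CerfTheret14}.
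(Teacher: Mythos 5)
Your proof is correct, but it takes a noticeably different route from the paper's, and it is worth seeing why the paper's is cheaper. The paper applies the subadditive ergodic theorem directly to $\wT_G^{\C_{H,M}}(0,nx)$; that machine immediately yields a.s. and $L^1$ convergence to some constant $\mu'_{G,H}$ \emph{and} the infimum identity. All that remains is to identify $\mu'_{G,H}=\mu_G(x)$, and for that purpose convergence \emph{in probability} of the difference $\wT_G^{\C_{H,M}}(0,nx)-\wT_G^{\C_{G,M}}(0,nx)$ to $0$ (after division by $n$) suffices, because both normalized quantities already converge a.s.\ to deterministic limits. That convergence in probability needs nothing more than (i) a.s.\ finiteness of $T_G(\w0^{\C_{G,M}},\w0^{\C_{H,M}})$ and (ii) translation invariance, giving $\PP[|\cdot|>n\eps]\le 2\,\PP[T_G(\w0^{\C_{G,M}},\w0^{\C_{H,M}})>n\eps/2]\to 0$. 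You instead rebuild the whole conclusion by hand: a.s.\ convergence via Borel--Cantelli, $L^1$ convergence via uniform integrability from Proposition~\ref{propmoments}, and the infimum identity via Fekete. The extra price you pay is that Borel--Cantelli requires the quantitative statement $\sum_n\PP[R_0>\eps n]<\infty$, i.e.\ $\EE[R_0]<\infty$, not mere a.s.\ finiteness of $R_0$; you correctly sketch how to get this from exponential tails for the distance $d_1(0,\C_{H,M})$ together with the Antal--Pisztora chemical-distance estimate, but this is genuine additional work, and the paper's route avoids it entirely. Both arguments are sound; the paper's is leaner because it reuses the subadditive ergodic theorem it already has in hand, while yours is a more self-contained from-scratch verification whose cost is the stronger moment bound on $R_0$.
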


\begin{proof}
Since $G \lestoch H$ we get by coupling that $t_G(e) \leq t_H (e)$ for all $e\in \EE^d$. Let $M\in \RRp$ satisfying $H([0,M])> p_c(d)$, then $G([0,M])> p_c(d)$ and $\C_{H,M} \subset \C_{G,M}$. The proof of the convergence of $\wT_G^{\C_{H,M}} (0,nx)/n$ is a straightforward adaptation of the proof of the convergence of $\wT_G^{\C_{G,M}} (0,nx)/n$: by the subadditive ergodic theorem, there exists a function $\mu'_{G,H} : \RR^d \rightarrow [0,+\infty)$ such that for all $x\in \ZZ^d$ we have
$$   \mu'_{G,H}(x) \,=\, \inf_{n\in \NN^*} \frac{\EE \left[\wT_G^{\C_{H,M}} (0,nx) \right] }{n} \,=\,\lim_{n\rightarrow \infty} \frac{\wT_G^{\C_{H,M}} (0,nx)}{n}\quad \textrm{a.s. and in } L^1\,.  $$
It remains to prove that $\mu'_{G,H} = \mu_G$.  For any $x\in \ZZ^d$, for any $\eps >0$, we have
\begin{align}
\label{eqstab}
\PP & \left[ \left| \wT_G^{\C_{H,M}} (0,nx)  - \wT_G^{\C_{G,M}} (0,nx)  \right| > n\eps \right] \nonumber \\
& \qquad \qquad \,\leq\, \PP \left[ T_G(\w0^{\C_{G,M}}, \w0^{\C_{H,M}} ) + T_G (\wnx^{\C_{G,M}}, \wnx^{\C_{H,M}}) > n\eps \right]\nonumber \\
& \qquad \qquad \,\leq\, 2 \, \PP \left[ T_G(\w0^{\C_{G,M}}, \w0^{\C_{H,M}} ) > n\eps/2 \right] \,.
\end{align}
Since $\w0^{\C_{G,M}} \in \C_{G,M} \subset \C_{G,\infty}$ and $\w0^{\C_{H,M}} \in \C_{H,M} \subset \C_{G,M} \subset \C_{G,\infty} $, the time $T_G(\w0^{\C_{G,M}}, \w0^{\C_{H,M}} )$ is finite a.s. thus the right hand side of inequality (\ref{eqstab}) goes to $0$ as $n$ goes to infinity. This concludes the proof of Lemma~\ref{propstab}.
\end{proof}

As a simple consequence of the coupling built in section \ref{seccoupling} and the stabilization  Lemma \ref{propstab}, we obtain the monotonicity of the function $G \mapsto \mu_G$:

\begin{lemme}
\label{propmon}
Let $G$, $H$ be probability measures on $[0,+\infty]$. we have
$$G\lestoch H \Longrightarrow \mu_G \leq \mu_H\,.$$
\end{lemme}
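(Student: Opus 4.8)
The plan is to reduce both time constants to a comparison of passage times along paths living in a \emph{common} infinite cluster, so that the coupling $t_G(e)\le t_H(e)$ can be applied edgewise. The only subtlety is that the naive definition of $\mu_G$ uses the regularized times $\wT_G^{\C_{G,M}}$, whose base points $\wx^{\C_{G,M}}$ depend on $G$; this is precisely what the stabilization Lemma~\ref{propstab} is designed to circumvent.

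Concretely, fix $x\in\ZZ^d$. Since $G\lestoch H$, we may pick $M\in\RRp$ with $H([0,M])>p_c(d)$; then also $G([0,M])>p_c(d)$, and the coupling of Section~\ref{seccoupling} gives $t_G(e)\le t_H(e)$ for every edge $e$, hence $\C_{H,M}\subset\C_{G,M}$. By Lemma~\ref{propstab} applied to the pair $(G,H)$,
\[
\mu_G(x)\,=\,\lim_{n\to\infty}\frac{\wT_G^{\C_{H,M}}(0,nx)}{n}\qquad\text{a.s.},
\]
while Proposition~\ref{thmcv} (relation~\eqref{thmcv-ps}) applied to $H$ gives
\[
\mu_H(x)\,=\,\lim_{n\to\infty}\frac{\wT_H^{\C_{H,M}}(0,nx)}{n}\qquad\text{a.s.}
\]
Both regularizations now use the \emph{same} cluster $\C_{H,M}$, hence the same base points $\w0^{\C_{H,M}}$ and $\wnx^{\C_{H,M}}$.

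Since $t_G(e)\le t_H(e)$ for every $e$, every path $r$ satisfies $T_G(r)\le T_H(r)$, and taking the infimum over paths between the (common) base points yields
\[
\wT_G^{\C_{H,M}}(0,nx)\,=\,T_G\bigl(\w0^{\C_{H,M}},\wnx^{\C_{H,M}}\bigr)\,\le\,T_H\bigl(\w0^{\C_{H,M}},\wnx^{\C_{H,M}}\bigr)\,=\,\wT_H^{\C_{H,M}}(0,nx).
\]
Dividing by $n$ and letting $n\to\infty$ along the almost sure convergences above gives $\mu_G(x)\le\mu_H(x)$ for all $x\in\ZZ^d$, and by homogeneity this extends to the pseudo-norms on $\Rd$. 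I do not expect a serious obstacle here: the work has already been done in Lemma~\ref{propstab}, and the present argument is just the observation that, once both time constants are realized on the cluster of the larger measure $H$, the edgewise domination $t_G\le t_H$ passes directly to the infima over paths. The one point that must be stated carefully is the choice of $M$ admissible for $H$ (so that Lemma~\ref{propstab} applies), which is unproblematic because $G\lestoch H$ makes any such $M$ automatically admissible for $G$ as well.
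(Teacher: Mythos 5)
Your proposal is correct and follows essentially the same route as the paper: stabilize both time constants on the common cluster $\C_{H,M}$ via Lemma~\ref{propstab}, use the edgewise coupling $t_G(e)\le t_H(e)$ to compare the regularized times $\wT_G^{\C_{H,M}}$ and $\wT_H^{\C_{H,M}}$, and pass to the limit. You have simply written out the intermediate steps that the paper leaves implicit.
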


\begin{proof}
By construction of $\mu_G$ and $\mu_H$, it suffices to prove that $\mu_G (x) \leq \mu_H (x)$ for all $x\in \ZZ^d$. By coupling, since $ G \lestoch H$, we have $t_G(e) \leq t_H (e)$ for every edge $e$. Using Lemma~\ref{propstab} the conclusion is immediate, since we have a.s.
$$ \mu_G (x) \,=\, \lim_{n\rightarrow \infty}  \frac{\wT_G^{\C_{H,M}} (0,nx)}{n} \,\leq \,\lim_{n\rightarrow \infty}  \frac{\wT_H^{\C_{H,M}} (0,nx)}{n}  \,=\, \mu_H(x) \,.$$ 
\end{proof}


\subsection{Stabilization of the point $\wx$ for the Cheeger constant}
\label{secstab}

Concerning the Cheeger constant, we need a stabilization result similar to Lemma \ref{propstab}. For a path $r\in \mathcal{R} (x,y)$, let us define $\mathbf{b}_p (r) = \Card{\{ e \in \partial^+ r \,:\, e \text{ is } p-open\}}$. For $x,y\in \C_p$, we define $b_p (x,y) = \inf \{ \mathbf{b}_p (r) \,:\, r \in \mathcal{R} (x,y), r\textrm{ is }p-\text{open} \}$.

\begin{lemme}
\label{propstab2}
For any $p,p_0$ such that $p_c(2)< p_0 \leq p \leq 1$, for any $x \in \mathbb{R}^2$, we have
\begin{equation}\nonumber
        \beta_p(x)
        = \lim_{n \rightarrow \infty} \frac{b_p(\widetilde{0}^{\C_{p_0}}, \widetilde{nx}^{\C_{p_0}})}{n} \quad \quad \P_p-a.s. \text{ and in } L^1(\P_p).
\end{equation}
\end{lemme}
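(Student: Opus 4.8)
The plan is to mimic the proof of the analogous stabilization statement for the time constant, Lemma~\ref{propstab}, replacing the regularized passage time $\wT_G^{\C_{G,M}}$ by the right-boundary distance $b_p$ and the graph $\C_{G,M}$ by $\C_p$. First I would recall that for $p_c(2)<p_0\le p\le 1$ we have the coupling inclusion $\C_{p_0}\subset\C_p$, so that both $\w0^{\C_{p_0}}$ and $\wnx^{\C_{p_0}}$ are vertices of $\C_p$, hence $b_p(\w0^{\C_{p_0}},\wnx^{\C_{p_0}})$ is well defined and finite $\P_p$-a.s. By the subadditive ergodic theorem (exactly as in the proof of Theorem~2.1 of \cite{biskup2012isoperimetry}, i.e.\ the Proposition defining $\beta_p$, but now with the deterministic reference points $\w0^{\C_{p_0}}$ and $\wnx^{\C_{p_0}}$ in place of $\w0^{\C_p}$ and $\wnx^{\C_p}$), there exists a function $\beta'_{p,p_0}:\RR^2\to\RRp$, homogeneous and subadditive, such that
$$\beta'_{p,p_0}(x)=\lim_{n\to\infty}\frac{b_p(\w0^{\C_{p_0}},\wnx^{\C_{p_0}})}{n}\qquad\P_p\text{-a.s. and in }L^1(\P_p),$$
the necessary integrability of $b_p(\w0^{\C_{p_0}},\wnx^{\C_{p_0}})$ following from Lemma~\ref{lem:rightint} together with the exponential tail of the chemical distance in $\C_{p_0}$ (as in \cite{biskup2012isoperimetry}).

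It then remains to identify $\beta'_{p,p_0}$ with $\beta_p$. The natural route is to compare $b_p(\w0^{\C_{p_0}},\wnx^{\C_{p_0}})$ with $b_p(\w0^{\C_p},\wnx^{\C_p})$ and invoke the Proposition defining $\beta_p$. By the (approximate) triangle inequality for $b_p$ along right-most paths, $|b_p(\w0^{\C_{p_0}},\wnx^{\C_{p_0}})-b_p(\w0^{\C_p},\wnx^{\C_p})|$ is bounded above by a sum of two terms of the form $b_p(\w0^{\C_{p_0}},\w0^{\C_p})$ and $b_p(\wnx^{\C_{p_0}},\wnx^{\C_p})$, plus possibly a bounded additive error coming from concatenating right-most paths. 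For the error term to be negligible after division by $n$, it suffices that
$$\P_p\left[b_p(\w0^{\C_{p_0}},\w0^{\C_p})>n\eps\right]\xrightarrow[n\to\infty]{}0,$$
which holds because $\w0^{\C_{p_0}}$ and $\w0^{\C_p}$ are both $\P_p$-a.s.\ finite deterministic-rule-chosen vertices of the same infinite cluster $\C_p$, so $b_p(\w0^{\C_{p_0}},\w0^{\C_p})$ is an a.s.\ finite random variable (dominated by, say, $\mathbf b_p$ of any fixed right-most path joining them, which in turn is at most the number of edges of a $\C_p$-geodesic between them, a.s.\ finite). Dividing by $n$ and passing to the limit along $n\to\infty$ yields $\beta'_{p,p_0}(x)=\beta_p(x)$ first for $x\in\ZZ^2$, then for all $x\in\RR^2$ by homogeneity and continuity of both sides (both being norms, or at least continuous homogeneous functions).

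The main obstacle is handling the concatenation of right-most paths cleanly: unlike for ordinary first-passage percolation, the family $\mathcal R(x,y)$ of right-most paths does not obviously give $b_p(x,z)\le b_p(x,y)+b_p(y,z)$ on the nose, because gluing two right-most paths at $y$ need not produce a right-most path and may create or destroy right-boundary edges near the junction. The remedy is that this defect is uniformly bounded (it only affects $O(1)$ edges in a neighbourhood of $y$), so one obtains $b_p(x,z)\le b_p(x,y)+b_p(y,z)+c$ for a universal constant $c$; this is exactly the estimate used in \cite{biskup2012isoperimetry} to run the subadditive ergodic theorem, and it is precisely what makes the additive junction error $O(1)$ and hence negligible after normalization by $n$. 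Once this is granted, the argument is a routine transcription of the proof of Lemma~\ref{propstab}.
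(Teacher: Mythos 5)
Your proof is correct and takes essentially the same approach as the paper: run the subadditive ergodic theorem with the stabilized endpoints $\w0^{\C_{p_0}}$, $\wnx^{\C_{p_0}}$, then identify the resulting limit with $\beta_p$ via the almost-subadditivity $b_p(x,z)\le b_p(x,y)+b_p(y,z)+4$ (equation (2.27) of \cite{biskup2012isoperimetry}) and the a.s.-finiteness of the junction terms $b_p(\w0^{\C_{p_0}},\w0^{\C_p})$, $b_p(\wnx^{\C_{p_0}},\wnx^{\C_p})$, the latter bounded using that a simple $p$-open path $\gamma$ is right-most and satisfies $|\partial^+\gamma|<3|\gamma|$ by Lemma~2.5 of \cite{biskup2012isoperimetry}.
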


\begin{proof}
Exactly as in the proof of Lemma \ref{propstab}, since the convergence of $b_p(\widetilde{0}^{\C_{p}}, \widetilde{nx}^{\C_{p}})/n$ follows from a subadditive argument, the proof can be adapted straightforwardly to prove the existence of
$$  \beta_{p,p_0}(x)
        := \lim_{n \rightarrow \infty} \frac{b_p(\widetilde{0}^{\C_{p_0}}, \widetilde{nx}^{\C_{p_0}})}{n} \quad \quad \P_p-a.s. \text{ and in } L^1(\P_p). $$
The only thing we have to prove is the equality $\beta_{p,p_0}(x)=\beta_{p}(x)$. By the almost-subbadditivity of $b_p$ (see equation (2.27) in \cite{biskup2012isoperimetry}), we have
$$ b_p(\widetilde{0}^{\C_{p}}, \widetilde{nx}^{\C_{p}}) \,\leq b_p(\widetilde{0}^{\C_{p_0}}, \widetilde{nx}^{\C_{p_0}}) + b_p(\widetilde{0}^{\C_{p_0}}, \widetilde{0}^{\C_{p}}) + b_p(\widetilde{x}^{\C_{p_0}}, \widetilde{nx}^{\C_{p}}) +  4 \,.$$
Since $\C_{p_0} \subset \C_p$, there exists a finite simple (thus also right-most) path $\gamma$ which is $p$-open between $\widetilde{0}^{\C_{p_0}}$ and $\widetilde{0}^{\C_{p}}$, and by \cite[Lemma 2.5]{biskup2012isoperimetry} we know that $|\partial^+\gamma|<3|\gamma|$, thus $b_p(\widetilde{0}^{\C_{p_0}}, \widetilde{0}^{\C_{p}}) \leq 3|\gamma| < +\infty$. The same holds for $b_p(\widetilde{nx}^{\C_{p_0}}, \widetilde{nx}^{\C_{p}})$. As in the proof of Lemma \ref{propstab}, this is enough to conclude that $\beta_{p,p_0}(x)=\beta_{p}(x)$.
\end{proof}

Notice that Lemma \ref{propstab2} does not imply the monotonicity of the Cheeger constant. Indeed, consider $p_c(2) < p \leq q$, then
\begin{itemize}
\item a $q$-open path $\gamma$ may not be $p$-open,
\item a $p$-open path $\gamma$ is $q$-open by coupling, but $\mathbf{b}_q (\gamma)$ may be strictly bigger than $\mathbf{b}_p (\gamma)$,
\end{itemize}
thus no trivial comparison between $b_p(\widetilde{0}^{\C_{p}}, \widetilde{nx}^{\C_{p}})$ and $b_q(\widetilde{0}^{\C_{p}}, \widetilde{nx}^{\C_{p}})$ holds.


\section{Renormalization}
\label{secrenorm}

In this section we present the renormalization process and the construction of modified paths that will be useful to study both the time constant and the isoperimetric constant. We consider coupled bound i.i.d. Bernoulli percolations of different parameters. As we will see in Section \label{secetape2}, the construcion of modified paths in the model of first passage percolation reduces to this simplest case.


\subsection{Definition of the renormalization process}

Consider parameters $p_0$ and $q$ such that $p_c(d) < p_0  \leq q \leq 1$. Consider i.i.d. Bernoulii percolation on the edges of $\ZZ^d$ for these three parameters that are coupled, i.e. any $p_0$-open edge is $q$-open. Denote as before by $\mathcal C_{p_0}$ the a.s. unique infinite cluster of the supercritical Bernoulli field of parameter $p_0$. We call this field the $p_0$-percolation and its clusters the $p_0$-clusters.

We use a renormalization process in the spirit of the work of Antal and Pisztora~\cite{AP}. For a large integer $N$, that will be apropriately chosen later, we chop $\Zd$ into disjoint $N$-boxes as follows: we set $B_N$ to be the box $[-N,N]^d \cap \Zd$ and define the family of $N$-boxes by setting, for $\mathbf{i} \in \Zd$,
$$B_N(\mathbf{i})= \tau_{\mathbf{i}(2N+1)}(B_N),$$
where $\tau_b$ stands for the shift in $\Zd$ with vector $b \in \Zd$. We will also refer to the box $B_N(\mathbf{i})$ as the $N$-box with coordinate $\mathbf{i}$. The coordinates of $N$-boxes will be denoted in bold and considered as macroscopic sites, to distinghish them from the microscopic sites in the initial graph $\Zd$. We also introduce larger boxes:
for $\mathbf{i} \in \Zd$,
$$B'_N(\mathbf{i})= \tau_{\mathbf{i}(2N+1)}(B_{3N}).$$
As in \cite{AP}, we say that a connected cluster $C$ is a crossing cluster for a box $B$, if for all $d$ directions there is an open path contained in $C\cap B$ joining the the two opposite faces of the box $B$. 

Let $\C'_{p_0} = (\ZZ^d, \{ e\in \EE^d \,:\, e \textrm{ is }p_0\textrm{-open} \})$ be the graph whose edges are opened for the Bernoulli percolation of parameter $p_0$. We recall that $\C_{p_0}$ is the infinite cluster of $\C'_{p_0}$, and we have $D^{\C_{p_0}} (x,y) = D^{\C'_{p_0}} (x,y)$ for every vertices $x$ and $y$ in $\C_{p_0}$, and $D^{\C_{p_0}} (x,y)=+\infty$ if $x$ or $y$ are not in $\C_{p_0}$. Let us recall the following result, obtained by Antal and Pisztora~\cite[Theorem 1.1]{AP}, that says that the chemical distance $D^{\mathcal C'_{p_0}}$ can't be too large when compared to $\|\cdot \|_1$ or $\|\cdot \|_\infty$ (or any other equivalent norm): there exist positive constants $\hat A,\hat B, \beta$ such that
\begin{equation}
\label{EQ:AP}
\forall x \in \Zd \quad \P( \beta \|x\|_1 \le D^{\mathcal C'_{p_0}}(0,x)<+\infty) \le \hat A \exp(- \hat B\|x\|_1)\,,
\end{equation}
\begin{equation}
\label{EQ:APbis}
\forall x \in \Zd \quad \P( \beta \|x\|_{\infty} \le D^{\mathcal C'_{p_0}}(0,x)<+\infty) \le \hat A \exp(- \hat B\|x\|_{\infty})\,,
\end{equation}
and
\begin{equation}
\label{EQ:AP3}
\forall x \in \Zd \quad \P( \beta \|x\|_{2} \le D^{\mathcal C'_{p_0}}(0,x)<+\infty) \le \hat A \exp(- \hat B\|x\|_{2})\,.
\end{equation}
In fact Antal and Pisztora proved (\ref{EQ:AP}), but different norms being equivalent in $\RR^d$, we can obtain (\ref{EQ:APbis}) and (\ref{EQ:AP3}) by changing the constants.

\begin{defi}
\label{DEF:goodbox}
We say that the macroscopic site $\mathbf{i}$ is \emph{$(p_0,q)$-good} (or that the box $B_N(\mathbf{i})$ is $(p_0,q)$-good) if the following events occur:
\begin{itemize}
\item[(i)] There exists a unique $p_0$-cluster $\C$ in $B'_N(\mathbf{i})$ which has more than $N$ vertices;
\item[(ii)] This $p_0$-cluster $\C$ is crossing for each of the $3^d$ $N$-boxes included in $B'_N(\mathbf{i})$;
\item[(iii)] For all $x, y\in B_N'(\mathbf{i})$, if $\|x-y\|_\infty \geq N$ and $x$ and $y$ belong to this $p_0$-cluster $\C$, then $D^{\C'_{p_0}}(x,y)\leq  3 \beta N$;
\item[(iv)] If $\pi$ is a $q$-open path in $B'_N(\mathbf{i})$ such that $|\pi| \geq N$, then $\pi$ intersects this $p_0$-cluster $\C$ in $B'_N(\mathbf{i})$, i.e., they share a common vertex.
\end{itemize}
We call this cluster $\C$ the crossing $p_0$-cluster of the $(p_0,q)$-good box $B_N(\mathbf{i})$. 
\end{defi}
Otherwise, $B_N(\mathbf{i})$ is said to be \emph{$(p_0,q)$-bad}. For short, we say that $B$ is good or bad if there is no doubt about the choice of $(p_0,q)$.

On the macroscopic grid $\Zd$, we consider the same standard nearest neighbour graph structure as on the microscopic initial grid $\Zd$. Moreover we say that two macroscopic sites $\mathbf{i}$ and $\mathbf{j}$ are $*$-neighbors if and only if $\|\mathbf{i} - \mathbf{j}\|_{\infty} = 1$. If $C$ is a connected set of macroscopic sites, we define its exterior vertex boundary
$$\partial_v C=\{\mathbf{i} \in \Zd \backslash C: \; \mathbf{i} \text{ has a neighbour in } C \}.$$
For a bad macroscopic site $\mathbf{i}$, denote by $C(\mathbf{i})$ the connected cluster of bad macroscopic sites containing $\mathbf{i}$: the set $\partial_v C(\mathbf{i})$ is then a $*$-connected set of good macroscopic sites. For a good macroscopic site $\mathbf{i}$, we define $\partial_v C(\mathbf{i})$ to be $\{\mathbf{i}\}$.


\subsection{Modification of a path}

Let $p_c(d)<p_0 \leq p \leq q$ and $N$ be fixed. 
Let now $\gamma$ be a $q$-open path in $\Zd$. What we want to do is to remove from $\gamma$ the edges that are p-closed, and to look for bypasses for these edges using only edges that are $p_0$-open.

To $\gamma$, we associate the connected set $\Gamma \subset \Zd$ of $N$-boxes it visits: this is a lattice animal, \ie a connected finite set of $\Zd$, containing the box that contains the starting point of $\gamma$. We decompose $\gamma$ into two parts, namely $\gamma_a = \{e \in \gamma : e \textrm{ is } p\textrm{-open}\}$ and $\gamma_b = \{e \in \gamma : e \textrm{ is } p\textrm{-closed}\}$. We denote by $\Bad$ the (random) set of bad connected components of the macroscopic percolation given by the states of the $N$-boxes.

\begin{lemme}
\label{LEM:modif}
Assume that $y \in \mathcal C_{p_0}$, that $z \in \mathcal C_{p_0}$, that the $N$-boxes containing $y$ and $z$ are $(p_0,q)$-good and belong to an infinite cluster of $(p_0,q)$-good boxes. Let $\gamma$ be a $q$-open  
path between $y$ and $z$. Then there exists a $p$-open path $\gamma'$ between $y$ and $z$ that has the following properties :
\begin{enumerate}
\item $\gamma'\setminus \gamma$ is a collection of disjoint self avoiding $p_0$-open paths that intersect $\gamma' \cap \gamma$ only at their endpoints;
\item ${\displaystyle \Card{\gamma' \setminus \gamma} \leq \rho_d \left( N \sum_{C \in \Bad: \; C \cap \Gamma \ne \varnothing} \Card{C} + N^d \Card{\gamma_b} \right)}$, where $\rho_d$ is a constant depending only on the dimension $d$.
\end{enumerate}
\end{lemme}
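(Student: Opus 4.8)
The plan is to treat the $p$-closed edges of $\gamma$ one connected macroscopic piece at a time, and for each such piece construct a bypass at the block scale. First I would partition the $p$-closed edges of $\gamma$: each $p$-closed edge lies in some $N$-box, and that $N$-box is either good or bad. If it is good, then since it belongs to the infinite cluster of good boxes (or at worst is separated from $y,z$ by bad boxes), the crossing $p_0$-cluster in the enlarged box $B'_N$ provides, via property (iii) of Definition \ref{DEF:goodbox}, a short $p_0$-open detour of length at most $3\beta N$; but one must first enter that crossing cluster. Property (iv) is exactly what makes this possible: the portions of $\gamma$ entering and leaving the problematic region are $q$-open, so as soon as such a portion has length $\geq N$ inside $B'_N(\mathbf i)$ it must touch the crossing $p_0$-cluster of $\mathbf i$. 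Thus I would follow $\gamma$ until it first meets a good box's crossing cluster before the closed edge and after it, splice in the $p_0$-open path between those two contact points guaranteed by (iii), and discard the segment of $\gamma$ in between.

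For the $p$-closed edges contained in \emph{bad} boxes, the construction is at the scale of bad components: for each bad component $C$ of macroscopic sites meeting $\Gamma$, consider its exterior $*$-boundary $\partial_v C$, which by construction is a $*$-connected set of good macroscopic sites. The path $\gamma$ enters and exits the macroscopic region $C$ through boxes in $\partial_v C$; I would extract the first and last such boundary boxes on the relevant excursion, and join them by a path that stays in the good boxes of $\partial_v C$. Concretely one uses the crossing clusters of the good boxes along a $*$-connected chain in $\partial_v C$: two $*$-neighboring good boxes have $B'_N$'s that overlap in a full $N$-box, and the uniqueness/crossing conditions (i)–(ii) force their crossing clusters to coincide on that overlap, so one can hop from one crossing cluster to the next. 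Again (iii) controls the length of each hop by $3\beta N$, and (iv) ensures $\gamma$ actually reaches the first good box's crossing cluster. Chaining these together over $\partial_v C$ produces a $p_0$-open bypass whose length is bounded by $C' N \Card{\partial_v C}$, and a standard lattice-animal fact bounds $\Card{\partial_v C}$ by a dimensional constant times $\Card{C}$.

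Concatenating: replace each maximal excursion of $\gamma$ through bad regions and through bad/good boxes containing $p$-closed edges by the corresponding $p_0$-open bypass, keeping the $p$-open portions of $\gamma$ untouched. The resulting path $\gamma'$ is $p$-open (the kept edges are $p$-open, the inserted edges are $p_0$-open hence $p$-open by the coupling $p_0\le p$), and by construction the inserted pieces $\gamma'\setminus\gamma$ are disjoint self-avoiding $p_0$-open paths meeting $\gamma'\cap\gamma$ only at endpoints, after a routine loop-erasure to make them self-avoiding and disjoint. For the cardinality bound, each $p$-closed edge in a good box contributes a detour of length $O(N)$, giving the $N^d \Card{\gamma_b}$ term (the $N^d$ absorbing the possibility that several closed edges share a box plus the cost of navigating within $B'_N$), while each bad component $C$ meeting $\Gamma$ contributes $O(N\Card{C})$, giving the $N\sum_{C\in\Bad,\,C\cap\Gamma\neq\varnothing}\Card{C}$ term; collecting constants yields a single $\rho_d$.

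The main obstacle I anticipate is organizing the bookkeeping so that the detours are genuinely disjoint and are inserted consistently: a single excursion of $\gamma$ may weave in and out of several bad components and several good boxes carrying closed edges, and one must choose a clean rule (for instance, process the excursions of $\gamma$ in order along $\gamma$, and within each, work from the outermost contact points inward) to avoid double-counting detours or creating overlaps. Establishing that $*$-neighboring good boxes have matching crossing clusters on their overlap — which is what lets the boundary bypass be a single connected $p_0$-open path rather than a disconnected union — is the key geometric point, and it is exactly where properties (i) and (ii) of Definition \ref{DEF:goodbox} are used; getting this connectivity right, together with the entry guarantee from (iv), is the heart of the argument.
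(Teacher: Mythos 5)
Your proposal follows essentially the same route as the paper's proof: pass to the block scale, treat each bad $N$-box containing a $p$-closed edge via the exterior boundary $\partial_v C$ of its bad macroscopic cluster, use property (iv) of a good box to guarantee that the long $q$-open portions of $\gamma$ enter the crossing $p_0$-cluster, use properties (i)--(iii) to chain the crossing clusters of $*$-adjacent good boxes and bound each hop by $O(N)$ (plus an $O(N^d)$ cost to get on/off the crossing cluster within a $B'_N$), and finally account for the two contributions $N^d\Card{\gamma_b}$ and $N\sum\Card{C}$. The bookkeeping obstacle you flag (coinciding, nested, or overlapping boundary hypersurfaces, and possible multiple excursions of $\gamma$ through the same hypersurface) is exactly what the paper resolves with its successive extractions $\bm{\phi}_1,\bm{\phi}_2,\bm{\phi}_3,\bm{\phi}_4$, and the chaining of crossing clusters along a $*$-connected set of good boxes that you identify as the key geometric point is exactly the content of the paper's Lemma \ref{lemcard2}.
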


Before proving Lemma \ref{LEM:modif}, we need a simpler estimate on the cardinality of a path inside a set of good blocks.

\begin{lemme}
\label{lemcard2}
There exists a constant $\hat \rho_d$, depending only on $d$, such that for every fixed $N$, for every $n\in \NN^*$, if $(B_N(\mathbf{i}))_{\mathbf{i}\in \mathcal{I}}$ is a $*$-connected set of $n$ $(p_0,q)$-good $N$-blocks, if $x\in \B_N(\mathbf{j})$ for $\mathbf{j}\in \mathcal I$ and 
$x$ is in the crossing $p_0$-cluster of $ \B_N(\mathbf{j})$, if $y\in \B_N(\mathbf{k})$ for $\mathbf{k}\in \mathcal I$ and 
$y$ is in the crossing $p_0$-cluster of $ \B_N(\mathbf{k})$, then there exists a $p_0$-open path from $x$ to $y$ of length at most equal to $\hat \rho_d (Nn + N^d)$.
\end{lemme}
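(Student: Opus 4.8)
The plan is to travel from $x$ to $y$ by following a $*$-path of good boxes joining the box of $x$ to the box of $y$, crossing each intermediate box with only $O(N)$ microscopic steps — the bounded detours being produced by clause~(iii) of Definition~\ref{DEF:goodbox} — and treating the two end boxes with the crude estimate ``length $\le$ number of vertices of $B'_N$''. Concretely: since $(B_N(\mathbf{i}))_{\mathbf{i}\in\mathcal I}$ is $*$-connected and contains $\mathbf{j},\mathbf{k}$, fix a self-avoiding $*$-path $\mathbf{j}=\mathbf{i}_0,\mathbf{i}_1,\dots,\mathbf{i}_m=\mathbf{k}$ with all $\mathbf{i}_\ell\in\mathcal I$; self-avoidance forces $m+1\le n$, so $m\le n-1$. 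Let $\C_\ell$ be the crossing $p_0$-cluster of the good box $B_N(\mathbf{i}_\ell)$. For $1\le\ell\le m-1$ fix a vertex $\mathbf{y}_\ell$ lying on a crossing of the \emph{central} box $B_N(\mathbf{i}_\ell)$ inside $\C_\ell$ (such a crossing exists by clause~(ii)); set $\mathbf{y}_0$ to be such a crossing vertex in $\C_0\cap B_N(\mathbf{i}_0)$ and $\mathbf{y}_m$ one in $\C_m\cap B_N(\mathbf{i}_m)$.

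The key structural point is that the clusters $\C_\ell$ all coincide. For $0\le\ell\le m-1$, $\mathbf{i}_\ell$ and $\mathbf{i}_{\ell+1}$ are distinct $*$-neighbours, hence $B_N(\mathbf{i}_\ell)$ is one of the $3^d$ $N$-boxes of $B'_N(\mathbf{i}_{\ell+1})$; therefore the chosen crossing of $B_N(\mathbf{i}_\ell)$ inside $\C_\ell$ is a $p_0$-connected subset of $B'_N(\mathbf{i}_{\ell+1})$ with more than $N$ vertices, and the uniqueness clause~(i) applied at $\mathbf{i}_{\ell+1}$ forces it to lie in $\C_{\ell+1}$. In particular $\mathbf{y}_\ell\in\C_{\ell+1}$, and $\C_\ell$ and $\C_{\ell+1}$ are the same $p_0$-cluster of $\Zd$; inductively $x\in\C_0$ and $y\in\C_m$ belong to one $p_0$-cluster.

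Now bound the cost of one hop and conclude. Fix $0\le\ell\le m-1$, pick a coordinate $j$ with $(\mathbf{i}_{\ell+1}-\mathbf{i}_\ell)_j\ne0$ and let $\sigma\in\{\pm1\}$ be its sign. The box $B_N(\mathbf{i}_{\ell+1}+\sigma e_j)$ is one of the $3^d$ $N$-boxes of $B'_N(\mathbf{i}_{\ell+1})$; on a crossing of it by $\C_{\ell+1}$ in direction $j$, pick the vertex $q_\ell$ on its $\sigma$-face in coordinate $j$. Then $\mathbf{y}_\ell,q_\ell,\mathbf{y}_{\ell+1}$ all lie in $B'_N(\mathbf{i}_{\ell+1})$ and in $\C_{\ell+1}$, while a direct computation with the coordinates of the $N$-boxes gives $\|q_\ell-\mathbf{y}_\ell\|_\infty\ge N$ and $\|q_\ell-\mathbf{y}_{\ell+1}\|_\infty\ge N$ (the sole purpose of $q_\ell$ is to fulfil this sup-distance threshold, which the pair $(\mathbf{y}_\ell,\mathbf{y}_{\ell+1})$ alone need not). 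Two applications of clause~(iii) at $\mathbf{i}_{\ell+1}$ then yield a $p_0$-open path from $\mathbf{y}_\ell$ to $\mathbf{y}_{\ell+1}$ of length at most $6\beta N$, $\beta$ being the constant of~\eqref{EQ:AP}. Finally $x$ and $\mathbf{y}_0$, both in $\C_0$, are joined by a $p_0$-open path inside $B'_N(\mathbf{i}_0)$ of length at most $\Card{B'_N(\mathbf{i}_0)}=O(N^d)$, and likewise for $\mathbf{y}_m$ and $y$. Concatenating the two end pieces with the $m\le n-1$ hops (and, if desired, erasing loops) produces a $p_0$-open path from $x$ to $y$ of length at most $2\,\Card{B'_N}+6\beta N(n-1)\le\hat\rho_d(Nn+N^d)$ for a suitable $\hat\rho_d$ (one may take $\hat\rho_d=\max(6\beta,2\cdot7^d)$).

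The proof has no deep obstacle, but two points require care. The first is the identification $\C_\ell=\C_{\ell+1}$ through the uniqueness clause~(i): this is why one must work with genuine \emph{crossings} of the central boxes rather than arbitrary vertices of the clusters, since only a crossing is automatically a $p_0$-connected set of more than $N$ vertices sitting inside the neighbouring enlarged box. The second is the choice of the auxiliary vertex $q_\ell$ in the hop estimate, needed precisely because the sup-norm hypothesis of clause~(iii) may fail for $\mathbf{y}_\ell,\mathbf{y}_{\ell+1}$ directly; verifying $\|q_\ell-\mathbf{y}_\ell\|_\infty\ge N$ and $\|q_\ell-\mathbf{y}_{\ell+1}\|_\infty\ge N$ is elementary but slightly fiddly bookkeeping with the $N$-box coordinates, and is the part I expect to be the most delicate to write down cleanly.
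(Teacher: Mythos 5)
Your proof is correct and reaches the stated bound, but by a genuinely different construction from the paper's. The paper works microscopic-first: it cites Proposition~2.1 of Antal--Pisztora to produce a single $p_0$-cluster crossing every $N$-box of $\bigcup_{\mathbf{i}\in\mathcal I}B'_N(\mathbf{i})$, takes a microscopic path $\gamma$ from $x$ to $y$ inside that cluster, and extracts waypoints along $\gamma$ whose macroscopic coordinates advance by two at each step, so consecutive waypoints automatically meet the $\|\cdot\|_\infty\ge N$ threshold of clause~(iii); each hop then costs one application of~(iii), i.e.\ $3\beta N$. You work macroscopic-first: you fix a self-avoiding $*$-path of good boxes inside $\mathcal I$, pick one crossing vertex per visited box, \emph{reprove} the cluster-merging fact for $*$-adjacent good boxes directly from the uniqueness clause~(i) instead of quoting Antal--Pisztora, and, because two consecutive crossing vertices can be arbitrarily close, insert an auxiliary vertex $q_\ell$ in a flanking box to force the $\ge N$ threshold, so a hop costs two applications of~(iii), i.e.\ $6\beta N$. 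Both routes yield $\hat\rho_d(Nn+N^d)$. Yours is more self-contained and the hop count is transparent: the number of hops is $m\le n-1$ by self-avoidance of the macroscopic $*$-path, whereas the paper's control of the number of waypoints via the assertion $r\le 3^dn$ on the (non-injective) box-sequence of $\gamma$ is the one terse spot in that write-up. One cosmetic caveat for when you typeset this against the paper's own definitions: as written, $B'_N(\mathbf{i})=\tau_{\mathbf{i}(2N+1)}(B_{3N})$ has side $6N+1$ and so does not literally contain all $3^d$ blocks $B_N(\mathbf{j})$ with $\|\mathbf{j}-\mathbf{i}\|_\infty\le1$; your $q_\ell$ on the outer $\sigma$-face, and indeed the paper's own phrase ``the $3^d$ $N$-boxes included in $B'_N(\mathbf{i})$'', both implicitly use the intended $B'_N(\mathbf{i})=\bigcup_{\|\mathbf{j}-\mathbf{i}\|_\infty\le1}B_N(\mathbf{j})$, which is harmless but worth flagging.
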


\begin{proof}[Proof of Lemma \ref{lemcard2}]
Since $(B_N(\mathbf{i}))_{\mathbf{i}\in \mathcal{I}}$ is a $*$-connected set of good blocks, the definition of good boxes ensures that there exists a $p_0$-cluster $\C$ in $\C'_{p_0} \cap \cup_{\mathbf{i}\in \mathcal{I}} \{ e \in B'_N(\mathbf{i})\} $ 
which is crossing for every $N$-box included in $ \cup_{\mathbf{i}\in \mathcal{I}}  B'_{N}(\mathbf{i})$ (see Proposition 2.1 in Antal and Pisztora \cite{AP}). Since $x$ and $y$ are in $\C$,
there exists a path $\gamma = (\gamma_1,\dots , \gamma_p)$ from $x$ to $y$ in $\C'_{p_0} \cap \bigcup_{\mathbf{i}\in \mathcal{I}} \{ e \in B'_N(\mathbf{i}) \}$. Let $(\bm{ \phi}_i)_{1\leq i \leq r}$ be the path of macroscopic sites corresponding to the path of good blocks visited by $\gamma$ ($\bm{ \phi}$ may not be injective). Notice that $r\leq 3^d n $. We now extract a sequence of points along $\gamma$. Let $\Psi (1) =1$ and $j(1) = 1$. If $\Psi (1), \dots , \Psi (k) $ and $j(1), \dots , j(k)$ are defined, if the set $\{ i\geq \Psi (k) \,:\, \| \bm{ \phi}_i - \bm{ \phi}_{\Psi(k)} \|_\infty \geq 2  \}$ is non empty we define $\Psi(k+1) = \inf \{ i\geq \Psi (k) \,:\, \| \bm{ \phi}_i - \bm{ \phi}_{\Psi(k)} \|_\infty \geq 2  \}$ and we choose $j(k+1)\geq j(k)$ such that $\gamma_{j(k+1)} \in B_N(\bm{ \phi}_{\Psi(k+1)})$; if the set $\{ i\geq \Psi (k) \,:\, \| \bm{ \phi}_i - \bm{ \phi}_{\Psi(k)} \|_\infty \geq 2  \}$ is empty we stop the process. We obtain a sequence $(\gamma_{j(k)}, k=1 , \dots , r')$ of points, with $r' \leq r$.
By construction, for all $k\in \{1,\dots , r'-1  \}$, we have $\| \gamma_{j(k+1)}  - \gamma_{j(k)}  \|_\infty \geq N$ and
$$\| \bm{ \phi}_{\Psi(k+1)} - \bm{ \phi}_{\Psi(k+1)-1} \|_\infty \,=\, \| \bm{ \phi}_{\Psi(k+1)-1} - \bm{ \phi}_{\Psi(k)} \|_\infty \,=\, 1 \,,$$
thus $\gamma_{j(k)} \in B'_N(\bm{ \phi}_{\Psi(k+1) - 1})$ and $\gamma_{j(k+1)}\in B'_N(\bm{ \phi}_{\Psi(k+1) - 1})$.
For all $k\in \{1,\dots , r'-1 \}$, $B_N(\bm{ \phi}_{\Psi(k+1) - 1})$ is a good box, and $\gamma_{j(k)}$ and $\gamma_{j(k+1)}$ belong to the crossing $p_0$-cluster of $B_N(\bm{ \phi}_{\Psi(k+1) - 1})$, thus there exists a path from $\gamma_{j(k)}$ to $\gamma_{j(k+1)}$ in $\C'_{p_0}$ of length at most $3 \beta N$. By glueing these paths, we obtain a path from $x=\gamma_{j(1)}$ to $\gamma_{j(r')}$ in $\C'_{p_0}$ 
of length at most $3 \beta N r' \leq 3^{d+1} \beta N n$. Finally, since  
$y$ and $\gamma_{j(r')}$ belong to the crossing $p_0$-cluster of $B'_N(\bm{ \phi}_{\Psi(r') })$, 
there exists a path from $\gamma_{j(r')}$ to $y$ in $\C'_{p_0}$ of length at most $\Card{\{e \in B'_N(\bm{ \phi}_{\Psi(r') }) \}} \leq 2d \, 3^d N^d$.
\end{proof}

\begin{proof}[Proof of Lemma \ref{LEM:modif}]
To the path $\gamma$, we associate the sequence $\bm{ \phi}_0=(\bm{ \phi}_0(j))_{1 \le j \le r_0}$ of $N$-boxes it visits. Note that $\bm{ \phi}$ is not necessarily injective, and that the previously defined lattice animal $\Gamma$ is equal to $\bm{ \phi}_0(\{1, \dots,r_0\})$.

From the sequence $\bm{ \phi}_0$, we extract the subsequence $(\bm{ \phi}_1(j))_{1 \le j \le r_1}$, with $r_1 \le r_0$, of  $N$-boxes $B$ such that $\gamma \cap B$ contains at least one edge that is $p$-closed (more precisely, we keep the indices of the boxes $B$ that contain the smallest extremity, for the lexicographic order, of an edge of $\gamma$ that is $p$-closed). Notice that $r_1 \leq \Card{\gamma_b}$. The idea is the following:
\begin{itemize}
\item[$(1)$] If $\bm{ \phi}_1(j)$ is good, we add to $\gamma$ all the $p_0$-open edges in $B$: there will be enough such edges in the good $N$-box to find a by-pass for the edge of $\gamma$ that is $p$-closed.
\item[$(2)$] If $\bm{ \phi}_1(j)$ is bad, we will look for such a by-pass in the exterior vertex boundary $\partial_v C(\bm{ \phi}_1(j)))$ of the connected component of bad boxes of $\bm{ \phi}_1(j)$.
\end{itemize}
In the second case, we use Lemma \ref{lemcard2} to control the length of the by-pass we create. We recall that if $\mathbf{i}$ is good, then $\partial_v C( \mathbf{i}) = \{ \mathbf{i}\}$. Note that some $\partial_v C(\bm{ \phi}_1(j)))$ may coincide or be nested one in another or overlap. In order to define properly the modification of our path, we need thus to extract a subsequence once again, see Figure \ref{bypass1}.
\begin{figure}[h!]
\centering
\input{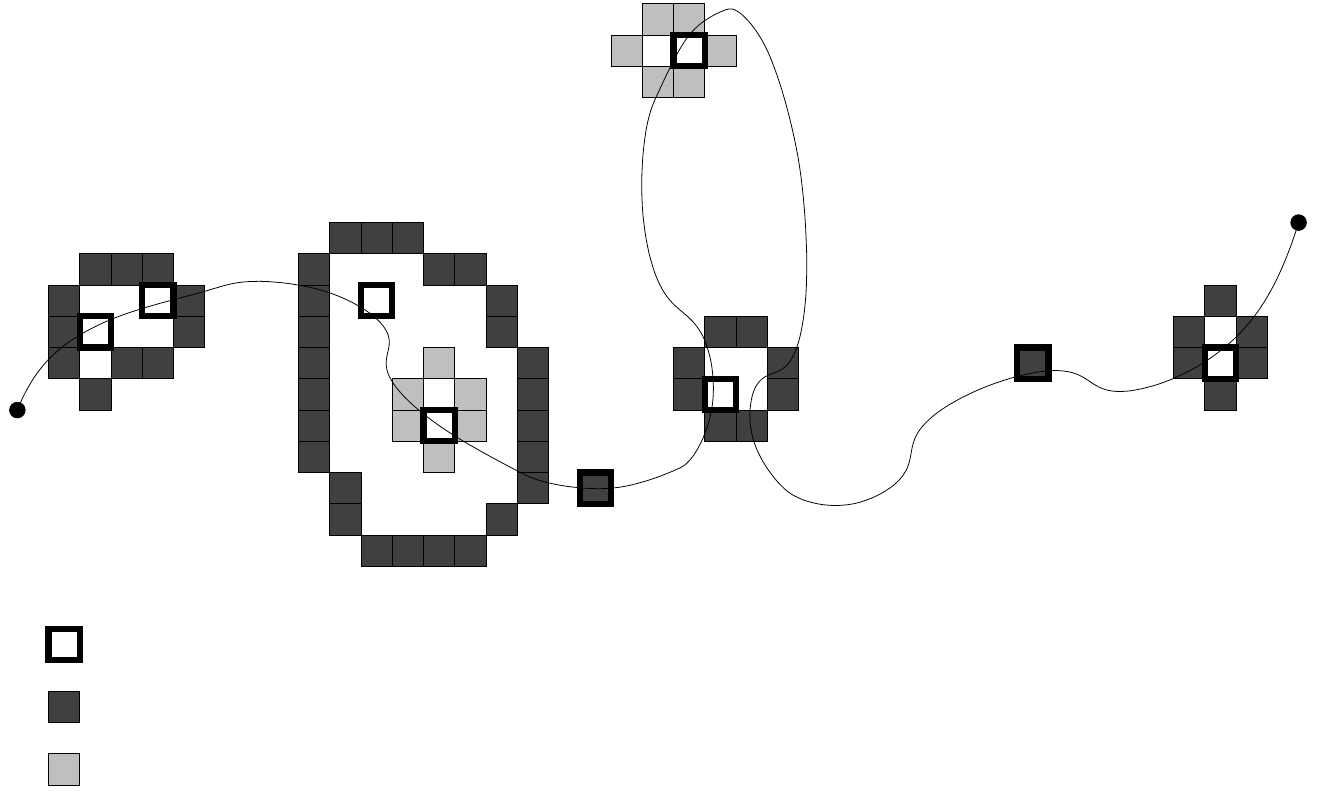_t}
\caption{Construction of the path $\gamma'$ - step 1.}
\label{bypass1}
\end{figure}
We first consider the $*$-connected components $(S_{\varphi_2(j)})_{1 \le j \le r_2}$, with $r_2 \le r_1$, of the union of the $(\partial_v C(\bm{ \phi}_1(j)))_{1 \le j \le r_1}$, by keeping only the smallest index for each connected component. Next, in case of nesting, we only keep the largest connected component. We denote by $(S_{\varphi_3(j)})_{1 \le j \le r_3}$, with $r_3 \le r_2$, the remaining hypersurfaces of good $N$-boxes. Finally it may happen that $\gamma$ visits several times the same $S_{\varphi_3(j)}$ for some $j$: in this situation we can and must remove the loops that $\gamma$ makes between its different visits in $S_{\varphi_3(j)}$. Thus by a last extraction we obtain $(S_{\varphi_4(j)})_{1 \le j \le r_4}$, where $S_{\varphi_4(1)} = S_{\varphi_3(1)}$ and for all $k\geq 1$, $\varphi_4 (k+1)$ is the infimum of the indices $(\varphi_3(j))_{1\le j \le r_3}$ such that $\gamma$ visits $S_{\varphi_3(j)}$ after it exits $S_{\varphi_4(k)} $ for the last time (if such a $j$ exists).

Note that the path $\gamma$ must visit each $(S_{\varphi_4(j)})_{1 \le j \le r_4}$. We now cut $\gamma$ in several pieces. Let $\Psi_{in}(1) = \min \{ k\geq 1 \,:\, \gamma_k  \in \cup_{\mathbf{i} \in S_{\varphi_4(1)} } B_N(\mathbf{i})\} $ and $\Psi_{out}(1) = \max \{ k\geq \Psi_{in} (1) \,:\, \gamma_k\in \cup_{\mathbf{i} \in S_{\varphi_4(1)} } B_N(\mathbf{i}) \} $ (see Figure \ref{bypass2}).
\begin{figure}[h!]
\centering
\input{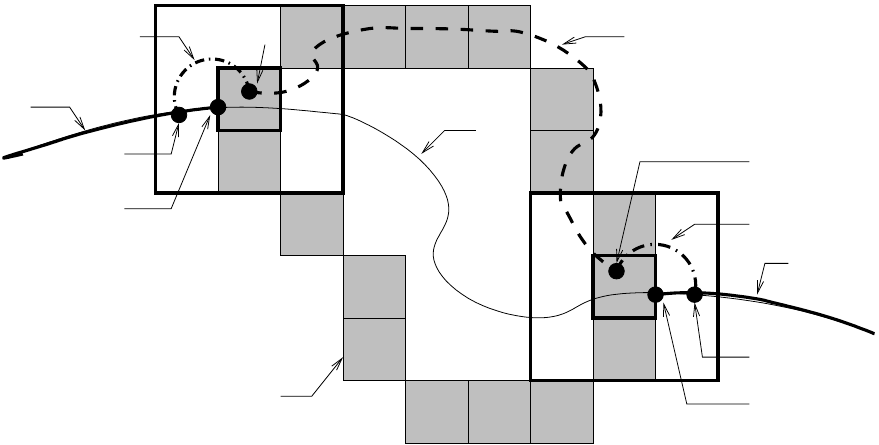_t}
\caption{Construction of the path $\gamma'$ - step 2.}
\label{bypass2}
\end{figure}
By recurrence, for all $1 \le j \le r_4$, we define $\Psi_{in}(j) = \min \{ k\geq \Psi_{out} (j-1) \,:\, \gamma_k  \in \cup_{\mathbf{i} \in S_{\varphi_4(j)} } B_N(\mathbf{i})\} $ and $\Psi_{out}(j) = \max \{ k\geq \Psi_{in} (j) \,:\, \gamma_k  \in \cup_{\mathbf{i} \in S_{\varphi_4(j)} } B_N(\mathbf{i})\} $. For all $1 \le j \le r_4-1$, let $\gamma_j$ be the part of $\gamma$ from $\gamma_{\Psi_{out}(j)}$ to $\gamma_{\Psi_{in}(j+1)}$. By construction $\gamma_j$ contains no $p$-closed edge, and has at least $N$ vertices in $B'_{N}(\mathbf{i})$ for some $\mathbf{i} \in S_{\varphi_4(j)}$ (resp. in $B'_{N}(\mathbf{k})$ for some $\mathbf{k} \in S_{\varphi_4(j+1)}$), thus $\gamma_j$ intersects the crossing $p_0$-cluster of $B'_{N}(\mathbf{i})$ (resp. $ B'_{N}(\mathbf{k})$) ; let us denote by $x_j$ (resp. $y_{j+1}$) the last (resp. first) intersection of $\gamma_j$ with the $p_0$-cluster of $B'_{N}(\mathbf{i})$ (resp. $ B'_{N}(\mathbf{k})$). The vertex $x_j$ (resp. $y_{j+1}$) is not inside $B_{N}(\mathbf{i})$ (resp. $ B_{N}(\mathbf{k})$), but it is connected inside the $p_0$-cluster of $B'_{N}(\mathbf{i})$ (resp. $ B'_{N}(\mathbf{k})$) to a vertex $x'_j$ (resp. $y'_{j+1}$) of $B_{N}(\mathbf{i})$ (resp. $ B_{N}(\mathbf{k})$) by a path $\gamma'_{j,out}$ (resp. $\gamma'_{j+1,in}$) of length at most equal to $2 d 3^d N^d \leq \hat \rho_d N^d$. Let us study more carefully the beginning of the path $\gamma$. Since the $N$-box containing $y$ belongs to an infinite cluster of good boxes, it cannot be in the interior of $S_{\varphi_4(1)}$. If the box containing $y$ is not in $S_{\varphi_4(1)}$ (thus it is outside $S_{\varphi_4(1)}$), denote by $\gamma_0$ the portion of $\gamma$ from $0$ to $\gamma_{\Psi_{in}(1)}$, and define as previously $y_1, y_1'$ and $\gamma'_{1,in}$. If the box containing $y$ is in $S_{\varphi_4(1)}$, then $\Psi_{in}(1) = 1$ and $\gamma_{\Psi_{in}(1)} =y$. As $y \in \mathcal C_{p_0}$ and the box containing $y$ is good, $y$ is in the crossing $p_0$-cluster of the box containing $y$, thus we can define $y_1 = y_1' = y$, $\gamma_0 = \emptyset$ and $\gamma'_{1,in}=\emptyset$. Similarly, we define $x_{r_4}, x'_{r_4}, \gamma_{r_4}$ and $\gamma'_{r_4, out}$ depending on the fact that the box containing $z$ belongs to $S_{\varphi_4(r_4)}$ or not. For all $1 \le j \le r_4$, we can apply Lemma \ref{lemcard2} to state that there exists a $p_0$-open path $\gamma'_{j,link}$ from $y'_j$ to $x'_{j}$ of length at most $\hat \rho_d (N^d + N \Card{S_{\varphi_4(j)}})$.

For all $1 \le j \le r_4$, define $\gamma'_j = \gamma'_{j,in} \cup \gamma'_{j,link} \cup \gamma'_{j,out}$. By construction each $\gamma'_j$ is $p_0$-open. We can glue together the paths $\gamma_0, \gamma'_1, \gamma_1, \gamma'_2, \dots, \gamma'_{r_4}, \gamma_{r_4}$ in this order to obtain a $p$-open path $\gamma'$ from $y$ to $z$. Up to cutting parts of these paths, we can suppose that each $\gamma_i'$ is a self-avoiding path, that the $\gamma_i'$ are disjoint and that each $\gamma_i'$ intersects only $\gamma_{i-1}$ and $\gamma_{i}$, and only with its endpoints.

Finally we need an estimate on $\Card {\gamma' \smallsetminus \gamma}$. Obviously $\gamma' \smallsetminus \gamma \subset \cup_{i=1}^{r_4} \gamma_i'$, thus
\begin{align*}
\Card {\gamma' \smallsetminus \gamma} & \,\leq\, 2 r_4 \hat \rho_d N^d + \sum_{i=1}^{r_4} \hat \rho_d (N^d + N \Card{S_{\varphi_4(j)}})\\
& \,\leq \, 3 r_4  \hat \rho_d N^d  +  \hat \rho_d N  \sum_{i=1}^{r_4}\Card{S_{\varphi_4(j)}} \\
& \,\leq \,3  \hat \rho_d N^d \Card{\gamma_b} + \hat \rho_d N  \sum_{C \in \Bad: \; C \cap \Gamma \ne \varnothing} \Card{\partial_v C} \,.
\end{align*}
To conclude, we just have to remark that $ \Card{\partial_v C} \le 2d  \Card{C}$.
\end{proof}


\subsection{Probabilistic estimates}

We want to bound the probability that $\Card{\gamma' \smallsetminus \gamma}$ is big for $q-p_0$ small enough. Lemma \ref{LEM:modif} makes appear the connected set $\Gamma \subset \Zd$ of $N$-boxes visited by the path $\gamma$. To control $\Card{\gamma'\smallsetminus \gamma}$, we need to have a deterministic control on $\Card{\Gamma}$. This is the purpose of the following Lemma.

\begin{lemme}
\label{lemcard3} 
There exists a constant $\tilde C_d$, depending only on $d$, such that for every path $\gamma$ of $\ZZ^d$, for every $N\in \NN^*$, if $\Gamma$ is the animal of N-blocks that $\gamma$ visits, then  
$$\Card{\Gamma}  \,\leq\,  \tilde C_d\left( 1+ \frac{\Card{\gamma}+1 }{N} \right)  -1 \,.$$
\end{lemme}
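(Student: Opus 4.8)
The idea is to extract from $\gamma$ a sequence of "milestone" vertices that are roughly $N$ apart in $\|\cdot\|_\infty$, so that the portion of $\gamma$ between two consecutive milestones is, on the one hand, long enough (at least $N$ edges) and, on the other hand, confined to a box of side $2N+1$, which can only meet boundedly many $N$-boxes. Concretely, write $\gamma=(v_0,e_1,v_1,\dots,e_{|\gamma|},v_{|\gamma|})$, set $k_0=0$, and given $k_j$ let $k_{j+1}$ be the smallest index $k>k_j$ with $\|v_k-v_{k_j}\|_\infty\ge N$ if such an index exists, and otherwise stop, setting $m=j$. This decomposes $\gamma$ into sub-paths $\gamma^{(j)}$ from $v_{k_j}$ to $v_{k_{j+1}}$ for $0\le j\le m-1$, together with a terminal sub-path $\gamma^{(m)}$ from $v_{k_m}$ to $v_{|\gamma|}$.

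Next I would observe that each $\gamma^{(j)}$ lies in a small box. By minimality of $k_{j+1}$, every intermediate vertex of $\gamma^{(j)}$ is at $\|\cdot\|_\infty$-distance at most $N-1$ from $v_{k_j}$, and since consecutive vertices differ by one unit in a single coordinate, the endpoint $v_{k_{j+1}}$ is at $\|\cdot\|_\infty$-distance exactly $N$; the same confinement (distance $\le N-1$) holds for the terminal piece $\gamma^{(m)}$ by the stopping rule. Hence $\gamma^{(j)}\subset v_{k_j}+\{-N,\dots,N\}^d$ for every $j$, a box of $2N+1$ consecutive integers in each coordinate. Since the coordinate projections of the $N$-boxes $B_N(\mathbf{i})$ tile $\ZZ$ with period $2N+1$, an interval of $2N+1$ consecutive integers meets at most two of them, so a box of side $2N+1$ meets at most $2^d$ of the $N$-boxes. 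Therefore each $\gamma^{(j)}$ visits at most $2^d$ boxes and $\Card\Gamma\le (m+1)2^d$.

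Finally I would bound $m$. For $0\le j\le m-1$ the sub-path $\gamma^{(j)}$ has $|\gamma^{(j)}|=k_{j+1}-k_j\ge\|v_{k_{j+1}}-v_{k_j}\|_1\ge\|v_{k_{j+1}}-v_{k_j}\|_\infty\ge N$; these sub-paths are edge-disjoint and contained in $\gamma$, so $mN\le\sum_{j=0}^{m-1}|\gamma^{(j)}|\le|\gamma|$, giving $m\le|\gamma|/N$. Combining, $\Card\Gamma\le 2^d\bigl(1+\tfrac{|\gamma|}{N}\bigr)\le 2^d\bigl(1+\tfrac{|\gamma|+1}{N}\bigr)$, and taking $\tilde C_d=2^d+1$ one gets $2^d\bigl(1+\tfrac{|\gamma|+1}{N}\bigr)\le\tilde C_d\bigl(1+\tfrac{|\gamma|+1}{N}\bigr)-1$ because $(\tilde C_d-2^d)\bigl(1+\tfrac{|\gamma|+1}{N}\bigr)\ge 1$, which is the claimed inequality. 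The only delicate point is the calibration of the extraction threshold so that the two competing requirements in Steps 2 and 3 hold simultaneously, together with the bookkeeping for the terminal piece $\gamma^{(m)}$ (which need not reach distance $N$) and the degenerate case $m=0$ of a very short $\gamma$; all of these are absorbed by the same confinement-in-a-box estimate.
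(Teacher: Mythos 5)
Your proof is correct and follows the same milestone-extraction strategy as the paper's. The paper stops extraction when the path exits the $3N$-box $B'_N$ around the current milestone's $N$-box (yielding coverage by $3^d$ boxes per milestone), whereas you stop when the path moves $\|\cdot\|_\infty$-distance $N$ from the current milestone vertex itself, giving confinement to a $(2N+1)$-cube and the slightly sharper count of $2^d$ boxes per milestone; the rest — $\|\cdot\|_1$-distance at least $N$ between consecutive milestones, hence at most $|\gamma|/N$ extraction steps — is the same.
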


\begin{proof}
Let $\gamma = (\gamma_i)_{i=1,\dots, n}$ be a path of $\ZZ^d$ for a $n\in \NN^*$ ($\gamma_i$ is the $i$-th vertex of $\gamma$, $n= |\gamma| +1$), and fix $N\in \NN^*$. Let $\Gamma$ be the animal of N-blocks that $\gamma$ visits. We will include $\Gamma$ in a bigger set of blocks whose size can be controlled. Let $p(1) = 1$ and $\mathbf{i}_1$ be the macroscopic site such that $\gamma_1 \in B_N(\mathbf{i}_1)$. If $p(1),\dots , p(k)$ and $\mathbf{i}_1, \dots, \mathbf{i}_k$ are constructed, define $p(k+1) = \inf \{ j \in \{p(k),\dots, n\} \,:\, \gamma_j \notin B_N'(\mathbf{i}_k)  \}$ if this set is not empty and let $\mathbf{i}_{k+1}$ be the macroscopic site such that $\gamma_{p (k+1)} \in B_N(\mathbf{i}_{k+1})$, and stop the process if for every $j \in \{p(k),\dots, n\}$ , $\gamma_j \in B_N'(\mathbf{i}_k)$. We obtain two finite sequences $(p(1), \dots, p(r)) $ and $(\mathbf{i}_1, \dots , \mathbf{i}_r)$. First notice that
$$ \Gamma \,\subset\, \bigcup_{k=1}^r B_N'(\mathbf{i}_k) $$
by construction, thus $\Card{\Gamma} \leq 3^d r -1 $. Moreover for every $k\in \{1,\dots , r-1\}$, $\|\gamma_{p(k+1)} - \gamma_{p(k)} \|_1 \geq N$, thus $p(k_1) - p(k) \geq N$. This implies that $N (r-1) \leq p(r) - p(1) \leq n$, and we conclude that
$$  \Card{\Gamma} \,\leq \, 3^d \left( 1+ \frac{n}{N} \right)   -1 \,.$$
\end{proof}

Then we need a control on the probability that a block is good.

\begin{lemme}
\label{couplage}
\begin{itemize}
\item[(i)] For every $q> p_c (d)$, there exists $\delta_0 (q)>0$ such that if $p_0 \in (p_c(d) ,q ]$ satisfy $q-p_0 \leq \delta_0$, then for every $\mathfrak p<1$, there exists an integer $N(p_0, q, \mathfrak p)$ such that the field $(\1_{\{B_N(\mathbf{i}) \text{ is $(p_0,q)$-good}\}})_{\mathbf{i} \in \Zd}$ stochastically dominates a family of independent Bernoulli random variables with parameter $\mathfrak p$. 
\item[(ii)] For every $p_0 >p_c (d) $, there exists $\delta_1 (p_0)>0$ such that if $ q_1 \in [p_0 ,1] $ satisfy $q_1-p_0 \leq \delta_1$, then for every $\mathfrak p<1$ there exists an integer $N'(p_0, q_1, \mathfrak p)$ such that for any $q\in [p_0,q_1]$ the field $(\1_{\{B_N'(\mathbf{i}) \text{ is $(p_0,q)$-good}\}})_{\mathbf{i} \in \Zd}$ stochastically dominates a family of independent Bernoulli random variables with parameter $\mathfrak p$.
\end{itemize}
\end{lemme}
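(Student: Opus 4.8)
My plan is to carry out the standard block‑percolation comparison. The key point is that the goodness of the box $B_N(\mathbf{i})$ is determined by the $p_0$‑ and $q$‑states of the edges inside the enlarged box $B'_N(\mathbf{i})=\tau_{\mathbf{i}(2N+1)}(B_{3N})$, and two such enlarged boxes are disjoint as soon as $\|\mathbf{i}-\mathbf{j}\|_\infty\geq 3$; hence the random field of good‑box indicators on $\Zd$ is finitely dependent, with a range of dependence that does not depend on $N$. By the Liggett--Schonmann--Stacey domination theorem for finitely dependent fields (see \cite{grimmett-book}, and the analogous use in \cite{AP}), for every $\mathfrak p<1$ there is $\rho=\rho(\mathfrak p)<1$, depending only on $d$ and $\mathfrak p$, such that any finitely dependent (with that fixed range) $\{0,1\}$‑valued field on $\Zd$ all of whose marginals are $\geq\rho$ stochastically dominates a family of i.i.d.\ Bernoulli($\mathfrak p$) variables. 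Thus both assertions reduce to the single estimate $\P[B_N(\mathbf{i})\text{ is }(p_0,q)\text{-good}]\geq\rho(\mathfrak p)$ for a suitable choice of $N$.

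To obtain this estimate I would bound the probability that $B_N(\mathbf{i})$ is bad by the sum of the probabilities that (i) fails, that (i) holds but (ii) fails, that (i) and (ii) hold but (iii) fails, and that (i), (ii), (iii) hold but (iv) fails, and treat the four terms in turn. The first three involve only the $p_0$‑percolation with $p_0>p_c(d)$ fixed: the existence of a unique large $p_0$‑cluster in $B'_N(\mathbf{i})$ that is crossing for each of the $3^d$ $N$‑sub‑boxes is Pisztora's / Antal--Pisztora's renormalization (Proposition~2.1 in \cite{AP}), and (iii) is a direct consequence of the chemical‑distance estimate \eqref{EQ:APbis} applied to the polynomially‑many relevant pairs of vertices of $B'_N(\mathbf{i})$; each of these three probabilities tends to $0$ as $N\to\infty$, with a speed depending on $p_0$ and $d$ only, \emph{not} on $q$. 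So the whole difficulty is concentrated in the last term --- the probability that, on the event (i)--(iii), some $q$‑open path of length $\geq N$ inside $B'_N(\mathbf{i})$ avoids the crossing $p_0$‑cluster $\C$ --- and this is the step I expect to be the main obstacle, and the only place where the hypothesis ``$q-p_0$ small'' is used.

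For condition (iv) the starting observation is that when $q=p_0$ it is automatic given (i): a ($p_0$‑open, self‑avoiding) path of length $\geq N$ visits more than $N$ vertices, so if it avoided $\C$ it would lie in a $p_0$‑cluster of $B'_N(\mathbf{i})$ other than $\C$ with more than $N$ vertices, contradicting (i). For $q>p_0$ I would use the coupling of Section~\ref{seccoupling}: call an edge \emph{extra} if it is $q$‑open but $p_0$‑closed; conditionally on the $p_0$‑configuration, each $p_0$‑closed edge is extra, independently, with probability $(q-p_0)/(1-p_0)$, which is $O(q-p_0)$ since $p_0$ stays bounded away from $1$. A $q$‑open path avoiding $\C$ is made of $p_0$‑open pieces --- each of them, by (i), confined to a $p_0$‑cluster with at most $N$ vertices, hence of $\ell^\infty$‑diameter $\leq N$ --- linked through extra edges, while by (ii) $\C$ meets every $N$‑sub‑box; reconstructing in this way a long $q$‑open path that entirely avoids the (essentially space‑filling) cluster $\C$ forces the path to rely, in a quantitatively controlled way, on the sparse extra edges. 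A contour‑type estimate over such configurations, in which every extra edge is present only with probability $O(q-p_0)$, then makes $\P[(iv)^c,(i),(ii),(iii)]$ as small as we like once $q-p_0\leq\delta_0(q)$ (the admissible threshold being allowed to depend on $q$ only because of the constraint $p_0>p_c(d)$) and $N$ is large enough; combining the four bounds yields $\P[B_N(\mathbf{i})\text{ is }(p_0,q)\text{-good}]\geq\rho(\mathfrak p)$ for an $N=N(p_0,q,\mathfrak p)$, which proves (i).

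Assertion (ii) then follows from (i) by monotonicity in the upper parameter. For $p_0\leq q\leq q_1$, every $q$‑open path is $q_1$‑open by the coupling, and conditions (i)--(iii) are insensitive to the upper parameter while (iv) only becomes weaker when the upper parameter decreases; hence a $(p_0,q_1)$‑good box is also $(p_0,q)$‑good, so the field appearing in (ii) for the parameter $q$ stochastically dominates the one for $q_1$. It therefore suffices to pick $\delta_1(p_0)$ small enough that the argument above applies at the top parameter for every value in $(p_0,p_0+\delta_1(p_0)]$ (this is where $\delta_1$ is allowed to depend on $p_0$), and then to take $N'(p_0,q_1,\mathfrak p)=N(p_0,q_1,\mathfrak p)$ from (i): the resulting Bernoulli($\mathfrak p$) domination at the parameter $q_1$ transfers, by the monotonicity just noted and transitivity of stochastic domination, to every $q\in[p_0,q_1]$, uniformly. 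As already said, the one genuinely delicate ingredient in the whole argument is the treatment of condition (iv).
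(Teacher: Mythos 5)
Your reduction via Liggett--Schonmann--Stacey and your handling of conditions (i)--(iii) of Definition~\ref{DEF:goodbox} (Pisztora coarse graining, Antal--Pisztora chemical-distance estimate) agree with the paper, as does your monotonicity argument deriving part~(ii) from part~(i). The divergence -- and the gap -- is in your treatment of condition~(iv), which you correctly single out as the delicate step.

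Your ``contour-type estimate over configurations with sparse extra edges'' does not work as described. The issue is that a $q$-open path of length $\geq N$ avoiding $\C$ need not use many extra edges. Condition~(i) bounds each $p_0$-open piece by confinement to a $p_0$-cluster of at most $N$ vertices, so a piece has $\ell^\infty$-diameter $\leq N-1$; but you need to control the \emph{length} of the piece, not its diameter, and a cluster on $\leq N$ vertices can perfectly well carry a self-avoiding $p_0$-open path of length up to $N-1$ (e.g.\ a linear cluster). Consequently a single extra edge linking two such pieces already produces a path of length $\geq N$. A contour sum over minimal witnessing sets of extra edges then reads, schematically, $\sum_{k\geq 1}\bigl(\#\text{$p_0$-closed edges in }B'_N\bigr)^k\,(q-p_0)^k \asymp N^d(q-p_0)$ at $k=1$, which does \emph{not} tend to $0$ as $N\to\infty$ for fixed $q-p_0>0$. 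The observation that ``$\C$ meets every $N$-sub-box'' does not force the path to cross many sub-boxes either, because the condition is on length, not spatial extent. So the union/contour bound cannot, on its own, give $\lim_{N\to\infty}\P(B_N\text{ good})=1$ with $\delta_0(q)$ independent of $N$.

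The idea the paper uses to close precisely this gap is a Russo-formula argument on the event $A_N$ that (iv) fails. Because any $0$--$1$ pivotal edge for $A_N$ must lie on a witnessing path of length $N$, the number of pivotal edges is at most $N$ on $A_N$, whence
$$\P_{p_0,q}(A_N)\,\leq\,\P_{p_0,p_0}(A_N)\,\exp\bigl(N\log(q/p_0)\bigr).$$
Combined with the exponential decay $\P_{p_0,p_0}(A_N)\leq\kappa_1(p_0)e^{-\kappa_2(p_0)N}$ coming from the $q=p_0$ (uniqueness) case, this gives $\P_{p_0,q}(A_N)\leq\kappa_1 e^{-N(\kappa_2-\log(q/p_0))}\to 0$ once $q-p_0\leq\delta_0$ with $\delta_0$ chosen so that $\log(q/p_0)<\kappa_2$. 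This multiplicative $\exp(cN)$-type comparison is the step your contour argument cannot reproduce, and it is what allows $\delta_0$ to be fixed \emph{before} $N$ is taken large, as the statement of Lemma~\ref{couplage} requires. (The paper also needs, and supplies, uniformity of $\kappa_1,\kappa_2$ in $p_0$ over a left neighbourhood of $q$ for part~(i), which you acknowledge but do not develop; that is secondary to the missing Russo step.)
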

\begin{proof}
Obviously, the states of $(B_N(\mathbf{i}))_{\mathbf{i} \in \Zd}$ 
have a finite range of dependance 
and are identically distributed. Then, by the Liggett--Schonmann--Stacey Theorem~\cite{LSS}, it is sufficient to check that $\lim_{N \to +\infty} \P(B_N \text{ good})=1$, for $B_N = B_N(\mathbf{0})$.

Consider first the properties (i) and (ii) of the Definition \ref{DEF:goodbox}, that depend only on $p_0$. For any $p_0 > p_c(d)$, when $d\ge 3$, the fact that 
$$\lim_{N \to +\infty} \P(B_N \text{ satisfies (i) and (ii)})=1$$
follows from the Pisztora coarse graining argument (see Pisztora~\cite{MR97d:82016} or the coarse graining section in Cerf~\cite{MR2241754}), see also for instance Grimmett \cite{grimmett-book} Lemma (7.104). When $d=2$, see Couronn{\'e} and Messikh~\cite{MR2078538}. We now study the property (iii) in the Definition \ref{DEF:goodbox}, that also depends only on $p_0$. Using Antal and Pisztora's estimate (\ref{EQ:AP}), for any fixed $p_0>p_c(d)$, we have for all $N$
\begin{align*}
\PP [B_N & \text{ does not satisfy (iii)} ]\\
&  \,\leq\, \sum_{x\in B_N'} \sum_{y\in B_N'} \mathbbm{1}_{\|x-y\|_\infty \geq N} \PP \left[ x\overset{\C'_{p_0}}{\longleftrightarrow} y \,,\, D^{\C'_{p_0}} (x,y) \geq 3 \beta N\right]\\
& \,\leq \,  \sum_{x\in B_N'} \sum_{y\in B_N'} \mathbbm{1}_{\|x-y\|_\infty \geq N} \PP \left[ x\overset{\C'_{p_0}}{\longleftrightarrow} y \,,\, D^{\C'_{p_0}} (x,y) \geq \beta \|x-y\|_\infty \right]\\
& \,\leq \,  \sum_{x\in B_N'} \sum_{y\in B_N'} \mathbbm{1}_{\|x-y\|_\infty \geq N} \hat A e^{-\hat B \|x-y\|_\infty}
 \,\leq \, (3N)^d . (3N)^d \hat A e^{-\hat B N}\,
\end{align*}
that goes to $0$ when $N$ goes to infinity. The delicate part of the proof is the study of the property (iv) in the Definition \ref{DEF:goodbox}. For $q=p_0$, we are done since property (iv) is implied by the uniqueness of the $p_0$-crossing cluster in $B_N'$. We want to deduce from this that property (iv) is asymptotically typical. We follow the proof of Russo's formula, see for instance Theorem 2.25 in \cite{grimmett-book}. For given parameters $p_c(d)<p_0 < p \leq 1$, we denote by $\PP_{p_0, p}$ the probability of the corresponding coupled Bernoulli percolation, and we declare that
\begin{itemize}
\item an edge $e$ is in state $0$ if $e$ is $p$-closed,
\item an edge $e$ is in state $1$ if $e$ is $p_0$-closed and $p$-open,
\item an edge $e$ is in state $2$ if $e$ is $p_0$-open.
\end{itemize}
We define $A_N$ as the event that there exists a crossing cluster $\C$ of edges of state $2$ in $B'_N$, and a path $\pi \subset B_N'$ of edges of state $1$ or $2$ such that $\Card{\gamma} = N$ and $\gamma$ does not intersect $\C$. Let us fix $p_0$. When $p$ vary, the edges of state $2$ remain unchanged, we only change the state of edges from $0$ to $1$ and conversely. For a given $p_0$, the event $A_N$ is increasing in $p$. We denote by $\mathcal N (A_N)$ the random number of edges that are $0-1$-pivotal for $A$, i.e., the number of edges $e$ such that if $e$ is in state $1$ then $A_N$ occurs, and if $e$ is in state $0$ then $A_N$ does not occur. Following the proof of Russo's formula, we obtain that
$$ \frac{\partial}{\partial p} \PP_{p_0,p}  (A_N) \,=\, \frac{1}{p} \EE_{p_0, p} [ \mathcal N (A_N) \, |\, A_N] \, \PP_{p_0,p} (A_N) \,. $$
We remark that when $A$ occurs, $\mathcal N ( A_N) \leq N$, the length of the desired path, thus
$$  \EE_{p_0, p} [ \mathcal N (A_N) \, |\, A_N] =  \EE_{p_0, p} [ \mathbbm{1}_{A_N} \mathcal N (A_N) \, |\, A_N]  \leq N \,.$$
We obtain that
\begin{align}
\label{eqRusso1}
\PP_{p_0,q} (A_N) & \,=\, \PP_{p_0,p_0} (A_N) \exp \left( \int_{p_0}^q  \frac{1}{p} \EE_{p_0, p} [ \mathcal N (A_N) \, |\, A_N] \, dp \right) \nonumber\\
& \,\leq\,  \PP_{p_0,p_0} (A_N) \exp \left( N \int_{p_0}^q  \frac{1}{p}  \, dp \right) \nonumber \\
& \,\leq \,  \PP_{p_0,p_0} (A_N) \exp \left( N \log \left( \frac{q}{p_0} \right) \right) \nonumber \\
& \,\leq \, \PP_{p_0,p_0} (A_N) \exp \left( N \log \left( 1+ \frac{q-p_0}{p_0} \right) \right)
\end{align}
It comes from the coarse graining arguments previously cited to study property (i) that $\PP_{p_0,p_0} (A_N)$ decays exponentially fast with $N$: there exists $\kappa_1(p_0), \kappa_2(p_0)$ such that 
\begin{equation}
\label{eqRusso2}
 \PP_{p_0,p_0} (A_N) \,\leq \, \kappa_1(p_0) e^{-\kappa_2(p_0) N} \,.
\end{equation}

\noindent
\underline{\em Part(ii) of Lemma \ref{couplage}:}
When $p_0$ is fixed, combining \eqref{eqRusso1} and \eqref{eqRusso2} is enough to conclude that there exists $\delta_1 (p_0) >0$ such that if $q_1<p_0 + \delta_1$, then 
\begin{equation}
\label{eqRusso4}
\lim_{N\rightarrow \infty} \PP_{p_0,q_1} (A_N) \,=\, 0 \,.
\end{equation}
We conclude that for every $p_0 >p_c (d) $, there exists $\delta_1 (p_0)>0$ such that if $ q_1 \in [p_0 ,1] $ satisfy $q_1-p_0 \leq \delta_1$, then for every $\mathfrak p<1$ there exists an integer $N'(p_0, q_1, \mathfrak p)$ such that for $q=q_1$ the field $(\1_{\{B_N'(\mathbf{i}) \text{ is $(p_0,q_1)$-good}\}})_{\mathbf{i} \in \Zd}$ stochastically dominates a family of independent Bernoulli random variables with parameter $\mathfrak p$. The only property of a good block that depends on $q$ is property (iv), and if $p_0 \leq q \leq q_1$ then any $q$-open path is also a $q_1$-open path, thus if a block is $(p_0,q_1)$-good then it is $(p_0,q)$-good for any parameter $q\in [p_0, q_1]$ . We conclude that for $N' = N'(p_0, q_1, \mathfrak p)$, for any $q\in [p_0,q_1]$, the field $(\1_{\{B_N'(\mathbf{i}) \text{ is $(p_0,q)$-good}\}})_{\mathbf{i} \in \Zd}$ stochastically dominates a family of independent Bernoulli random variables with parameter $\mathfrak p$.\\

\noindent
\underline{\em Part(i) of Lemma \ref{couplage}:}
If $q$ is fixed, we need to replace \eqref{eqRusso2} by a control on $ \PP_{p_0,p_0} (A_N)$ which is uniform for $p_0$ in a left neighborhood of $q$. Let us have a closer look at the proof of \eqref{eqRusso2}. In dimension $d\geq3$, we refer to the proof of Lemma 7.104 in Grimmett \cite{grimmett-book} : the constants $\kappa_1(p_0),\kappa_2(p_0)$ of \eqref{eqRusso2} appearing in Grimmett's book are explicit functions of the parameters $\delta (p_0)$ and $L(p_0)$ chosen in Lemma 7.78 in \cite{grimmett-book}. The probability controlled in Lemma 7.78 in \cite{grimmett-book} is clearly non decreasing in the parameter $p$ of the percolation, thus the choice of $\delta (p)$ and $L(p)$ made for a given $p>p_c(d)$ can be kept unchanged for any $p' \geq p$. Fixing $p_0' = (q-p_c(d))/2$, we obtain that for any $p_0 \in [p_0', q]$,
\begin{equation}
\label{eqRusso3}
 \PP_{p_0,p_0} (A_N) \,\leq \, \kappa_1(p'_0) e^{-\kappa_2(p'_0) N} \,.
\end{equation}
Combining \eqref{eqRusso1} and \eqref{eqRusso3} we can conclude that in dimension $d\geq3$, when $q$ is fixed, there exists $\delta_0 (q)$ such that if $p_0 \in [ p'_0 , q]$ satisfies $q-p_0 \leq \delta_0$, then \eqref{eqRusso4} still holds. In dimension $2$, \eqref{eqRusso2} is obtained by Couronn{\'e} and Messikh~\cite{MR2078538}, Theorem 9, in  a more general setting. The constants appearing in this theorem are explicit functions of the constants appearing in Proposition 6 in \cite{MR2078538}, and the same remark as in dimension $d\geq3$ leads to the uniform control \eqref{eqRusso3}, and the proof is complete.
\end{proof}

We can now use Lemma \ref{couplage} to bound the probability that $ \sum_{C \in \Bad: \; C \cap \Gamma \ne \varnothing} \Card{ C}$ is big. Denote by $\Animals$ the set of lattice animals containing $\mathbf{0}$, and $\Animals_n$ the subset of those having size $n$.

\begin{lemme}
\label{lem:new}
Let $\epsilon >0$. Let $p_c^{\textrm{site}}(d)$ be the critical parameter for independent Bernoulli site percolation on $\mathbb{Z}^d$. Choose $\alpha = \alpha(\epsilon)>0$ and then $\mathfrak p = \mathfrak p(\epsilon) \in (p_c^{\textrm{site}}(d),1)$, such that
\begin{align}
7^d \exp(-\alpha \varepsilon) & \le\frac{1}{3}, \label{choixalpha} \\
\mathfrak p+\frac{e^\alpha 7^d (1-\mathfrak p)}{1-e^\alpha 7^d (1-\mathfrak p) } &\le \frac{3}{2}. \label{choixp}
\end{align}
For a given $q>p_c(d)$ (resp. $p_0 > p_c(d)$), for a fixed $p_0 \in (p_c(d), q]$ such that $q-p_0 \leq \delta_0 (q)$ (resp. $q_1\geq p_0$ such that $q_1-p_0 \leq \delta_1 (p_0)$ and any $q\in [p_0,q_1]$),
let finally $N = N(p_0, q, \mathfrak p (\epsilon))$ (resp. $N = N'(p_0, q_1, \mathfrak p (\epsilon))$) be large enough to have the stochastic comparison of Lemma~\ref{couplage} with this parameter $\mathfrak p (\epsilon)$. Then for all $m\in \NN$, we have
$$  \P \left( \exists \Gamma \in \Animals, \;  
|\Gamma| \ge \frac{m}{N} , \; \sum_{ C \in \Bad: \; C \cap \Gamma \ne \varnothing}  |C| \ge \varepsilon \Card{\Gamma}
\right) \,\leq\, e^{-\frac{m}{N} +1} \,. $$
\end{lemme}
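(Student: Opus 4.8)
The plan is a three-step scheme: a union bound over the size $k=\Card{\Gamma}$ of the animal, an exponential Markov inequality for each fixed $\Gamma$, and a Peierls-type expansion for the resulting exponential moment; the numerical choices \eqref{choixalpha}--\eqref{choixp} are exactly what is needed to close the estimate. Throughout I abbreviate $f(\Gamma)=\sum_{C\in\Bad:\,C\cap\Gamma\neq\varnothing}\Card{C}$, and I use that, $N$ having been chosen large enough for Lemma~\ref{couplage}, the field of good macroscopic boxes stochastically dominates an i.i.d.\ Bernoulli($\mathfrak p$) field, so the indicators of the bad boxes are dominated by i.i.d.\ Bernoulli($1-\mathfrak p$) variables; in particular any prescribed set of $n$ macroscopic boxes is entirely bad with probability at most $(1-\mathfrak p)^n$.

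First I decompose the event over the integers $k$ with $k\geq m/N$ into $E_k=\{\exists\,\Gamma\in\Animals_k:\ f(\Gamma)\geq\varepsilon k\}$. It is a classical fact that the number of lattice animals of size $k$ containing $\mathbf 0$ is at most $7^{dk}$ (this is the combinatorial constant that the $7^d$ in \eqref{choixalpha}--\eqref{choixp} refers to), whence $\P(E_k)\leq 7^{dk}\,\max_{\Gamma\in\Animals_k}\P\bigl(f(\Gamma)\geq\varepsilon k\bigr)$.

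Second, fix $\Gamma$ with $\Card{\Gamma}=k$. Markov's inequality applied to $e^{\alpha f(\Gamma)}$ gives $\P(f(\Gamma)\geq\varepsilon k)\leq e^{-\alpha\varepsilon k}\,\E[e^{\alpha f(\Gamma)}]$, and the heart of the argument is the bound $\E[e^{\alpha f(\Gamma)}]\leq(1+y)^k$ with $y=\frac{e^{\alpha}7^d(1-\mathfrak p)}{1-e^{\alpha}7^d(1-\mathfrak p)}$, which is finite precisely because \eqref{choixp} forces $e^{\alpha}7^d(1-\mathfrak p)<1$. To obtain it, write $e^{\alpha f(\Gamma)}=\prod_{C\in\Bad:\,C\cap\Gamma\neq\varnothing}e^{\alpha\Card C}$ and expand the expectation over the possible families $\mathcal C$ of bad components meeting $\Gamma$; since distinct bad components are pairwise disjoint (and pairwise non-adjacent), the probability that $\mathcal C$ is exactly this family is at most $(1-\mathfrak p)^{\sum_{C\in\mathcal C}\Card C}$, so $\E[e^{\alpha f(\Gamma)}]\leq\sum_{\mathcal C}\bigl(e^{\alpha}(1-\mathfrak p)\bigr)^{\sum_{C\in\mathcal C}\Card C}$. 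Charging each $C\in\mathcal C$ to the lexicographically smallest box of $C\cap\Gamma$ defines an injection of $\mathcal C$ into the $k$ boxes of $\Gamma$; dropping the disjointness and non-adjacency constraints and using that there are at most $7^{dn}$ box-animals of size $n$ through a fixed box, each of the (at most $k$) occupied slots contributes a factor at most $\sum_{n\geq1}(e^{\alpha}7^d(1-\mathfrak p))^n=y$, so the sum is at most $\sum_{s=0}^{k}\binom{k}{s}y^s=(1+y)^k$. Altogether $\P(E_k)\leq\bigl(7^d e^{-\alpha\varepsilon}(1+y)\bigr)^k$.

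Third, \eqref{choixalpha} gives $7^d e^{-\alpha\varepsilon}\leq\frac13$, while \eqref{choixp} (using that $e^{\alpha}7^d$ is large, so that $e^{\alpha}7^d(1-\mathfrak p)$, and hence $y$, is small) gives $1+y\leq\frac32$; thus $7^d e^{-\alpha\varepsilon}(1+y)<1$, and summing the geometric series $\sum_{k\geq m/N}\bigl(7^d e^{-\alpha\varepsilon}(1+y)\bigr)^k$ produces the announced bound $e^{-m/N+1}$ (a statement which is anyway vacuous when $m/N\leq1$). I expect the main obstacle to be the exponential-moment estimate of the second step: the bad components hanging off different boxes of $\Gamma$ are strongly positively correlated --- a single large bad cluster may be attached to many boxes of $\Gamma$ at once --- so $\E[e^{\alpha f(\Gamma)}]$ cannot be factorised naively over the boxes of $\Gamma$, and it is the disjointness of distinct components together with the injective ``root in $\Gamma$'' bookkeeping that makes a clean product bound possible. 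A secondary point to keep track of is that the Liggett--Schonmann--Stacey domination of Lemma~\ref{couplage} must be in force with the prescribed parameter $\mathfrak p(\varepsilon)$ before any probabilistic estimate is run, which is why $N$ is chosen last.
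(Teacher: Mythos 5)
Your proposal follows the same high-level architecture as the paper — reduce to an i.i.d.\ Bernoulli$(\mathfrak p)$ macroscopic field via Lemma~\ref{couplage}, union bound over animal sizes $k\ge m/N$ with the $7^{dk}$ animal count, exponential Chebyshev in $\alpha$, then a geometric series — but replaces the paper's central stochastic-domination step by a direct Peierls-type expansion. Where the paper invokes a Fontes--Newman style algorithmic construction to dominate $\sum_{C\in\Bad:\,C\cap\Gamma\neq\varnothing}\Card{C}$ by $\sum_{i=1}^{k}\Card{\tilde C(i)}$ for $\tilde C(i)$ i.i.d.\ copies of the bad cluster $C(0)$ of the origin, and then simply factorizes $\E\bigl[e^{\alpha\sum\Card{\tilde C(i)}}\bigr]=\bigl(\E[e^{\alpha\Card{C(0)}}]\bigr)^k$, you expand the expectation over families of disjoint bad clusters meeting $\Gamma$, root each cluster at a distinguished box of $\Gamma$, and bound the sum as a product over boxes. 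The combinatorial route is a perfectly legitimate alternative and avoids having to set up the $\tilde C(i)$ coupling; this is a real structural difference from the paper and you identified correctly that the injective ``root in $\Gamma$'' bookkeeping (rather than naive factorization over boxes) is what makes it work.

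There is however a genuine quantitative gap. Your expansion gives the per-box factor $1+y$, because each box of $\Gamma$ either is a root (contributing at most $y=\sum_{n\ge1}\bigl(e^\alpha7^d(1-\mathfrak p)\bigr)^n$) or is not (contributing $1$). The paper's route gives the sharper per-box factor $\mathfrak p+y$: this is exactly $\E_{\mathfrak p}\bigl[e^{\alpha\Card{C(0)}}\bigr]$, where the term $\mathfrak p=\P_{\mathfrak p}(\Card{C(0)}=0)$ arises because the origin being open contributes probability $\mathfrak p$, not $1$. Your argument cannot recover this factor: a non-root box of $\Gamma$ may well lie inside a bad cluster rooted elsewhere, so you cannot credit it with a factor $\mathfrak p$. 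The hypothesis~\eqref{choixp} only guarantees $\mathfrak p+y\le 3/2$; it does \emph{not} give $1+y\le 3/2$, since $1+y=(1-\mathfrak p)+(\mathfrak p+y)$ and $1-\mathfrak p>0$. Your remark that ``$e^\alpha7^d$ is large, so $y$ is small'' is not justified by~\eqref{choixalpha}--\eqref{choixp}: those choices make $\alpha$ large when $\varepsilon$ is small, but say nothing for moderate or large $\varepsilon$, and in any case $y\le 3/2-\mathfrak p$ is all that~\eqref{choixp} yields. What you actually get is $7^de^{-\alpha\varepsilon}(1+y)\le\tfrac13\cdot\tfrac52<1$, so the series still converges geometrically with ratio $<5/6$, but not with ratio $\le 1/2$; the resulting tail bound is of the form $C\,(5/6)^{m/N}$ rather than the announced $e^{-m/N+1}$. (Separately, you should be aware that the paper's own computation ends with $\sum_{n\ge m/N}2^{-n}\le 2^{-m/N+1}$, and for $m/N>1$ this is \emph{larger} than $e^{-m/N+1}$, so there is a small slip in the lemma's displayed constant that is independent of your proof; but you should not compound it by also losing the factor $\mathfrak p$.) To fix your proof while keeping the Peierls expansion, either strengthen~\eqref{choixp} to $1+y\le 3/2$ (equivalently $y\le 1/2$, achievable by taking $\mathfrak p$ a little larger), or follow the paper and pass through the i.i.d.\ cluster domination to obtain the factor $\mathfrak p+y$.
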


\begin{proof}
We have
\begin{align*}
 \mathcal P (m) &   \stackrel{\textrm{def}}{=} \P   \left( \exists \Gamma \in \Animals, \;  
|\Gamma| \ge \frac{m}{N} , \; \sum_{ C \in \Bad: \; C \cap \Gamma \ne \varnothing}  |C| \ge \varepsilon \Card{\Gamma}
\right)\\
 & \leq   \sum_{n \ge \frac{m}{N}} \sum_{\Gamma \in \Animals_n} \mathbb P \left( \sum_{C  \in \Bad: \; C \cap \Gamma \ne \varnothing}  |C| \ge  \varepsilon |\Gamma| \right) \\
 & \leq  \sum_{n \ge \frac{m}{N}} \sum_{\Gamma \in \Animals_n} \mathbb P_{\mathfrak p} \left( \sum_{C  \in \Bad: \; C \cap \Gamma \ne \varnothing}  |C| \ge \varepsilon |\Gamma| \right).
\end{align*}
For the last inequality, we use the coupling Lemma~\ref{couplage} to replace the locally dependent states of our $N$-boxes by an independent Bernoulli site percolation with parameter $\mathfrak p$ chosen in~\eqref{choixp}. From now on, we work with this  Bernoulli site percolation with parameter $\mathfrak p$.
Denote by $C(0)$ the connected component of closed sites containing $0$ (with the convention that if $0$ is open, then $C(0)=\varnothing$). 
Let $(\tilde C(i))_{i \in \Zd}$ be independent and identically distributed random sets of $\Zd$ with the same law as $C(0)$. 
Fix a set $\Gamma=(\Gamma(i))_{1 \le i \le n}$ of sites; we first prove that, for the independent Bernoulli site percolation, the following stochastic comparison holds: 
\begin{equation}
\label{EQ:stochcomp}
 \sum_{C \in \Bad: \; C \cap \Gamma \ne \varnothing} |C| \lestoch \sum_{i=1}^n |\tilde C(i)|.
 \end{equation}
The idea is to build algorithmically the real clusters from the sequence of pre-clusters $(\tilde C(i))_{i \in \Zd}$, as in the work of Fontes and Newman \cite{MR1233623}, proof of Theorem~4. Note however that in our sum \eqref{EQ:stochcomp}, each visited cluster is only counted once, while they count each cluster the number of times it is visited, which explains the difference between our stochatic domination and their one.  
We proceed by induction on $j \in \{1, \dots,n\}$ to build a new family $(\overline C(i))_{1 \le i \le n }$ such that 
$$A_j \stackrel{def}{=}\bigcup_{C \in \Bad: \; C \cap \{\Gamma(i):\; 1\le i \le j\} \ne \varnothing} C \stackrel{law}{\subset}\bigcup_{i=1}^j \overline C(i)\subset \bigcup_{i=1}^j ( \Gamma(i)+ \tilde C(i)).
$$
 Set $\overline C(1)=\Gamma(1)+\tilde C(1)$.  Assume now that $(\overline C(i))_{1 \le i \le j}$ are built for some $j<n$:
\begin{itemize}
\item if $\Gamma(j+1) \in A_j$, then 
$A_{j+1}=A_j$,
so we set $\overline C(j+1)=\varnothing$; 
\item if $\Gamma(j+1) \in \partial_v A_j$ (the exterior vertex boundary of $ A_j$), then it is a good site, so we set $\overline C(j+1)=\varnothing$;
\item otherwise, the conditional distribution of the bad cluster $C$ containing the site $\Gamma(j+1)$, given $A_j$, is that of the percolation cluster of $\Gamma(j+1)$ in a site percolation model where $\Zd$ is replaced by $\Zd \backslash (A_j\cup \partial_v A_j)$; thus, it has the same law as the connected component of $\Gamma(j+1)$ in  
$$\overline C(j+1){=} \left( \Gamma(j+1) + \tilde C(j+1) \right)   \backslash 
\left( A_j \cup \partial_v A_j \right) ,$$ 
\end{itemize}
which ends the construction and proves \eqref{EQ:stochcomp}. As the number of lattice animals containing $0$ with size $n$ is bounded from above by $(7^d)^n$ (see Kesten~\cite{MR692943}, p 82. or Grimmett~\cite{grimmett-book}, p.85), we have, by the Markov inequality, 
\begin{align*}
 \mathcal P (m) \le & \sum_{n \ge \frac{m}{N}} (7^d)^n \exp(-\alpha \varepsilon n) \left( \mathbb E_{\mathfrak p}(\exp(\alpha | C(0)|)) \right)^n .
\end{align*}
But
\begin{align*}
\mathbb E_{\mathfrak p}(\exp(\alpha | C(0)|)) & = \mathfrak p+ \sum_{k\ge 1} \exp(\alpha k) \P_{\mathfrak p}( | C(0)|=k) 
 \le  \mathfrak p+ \sum_{k\ge 1} \exp(\alpha k) \P_{\mathfrak p}( |C(0)| \ge k) \\
& \le  \mathfrak p+ \sum_{k\ge 1} \exp(\alpha k)(7^d)^k(1- \mathfrak p)^k = \mathfrak p+\frac{e^\alpha 7^d (1- \mathfrak p)}{1-e^\alpha 7^d (1- \mathfrak p) }.
\end{align*}
With the choices \eqref{choixalpha} and \eqref{choixp} we made for $\alpha$ and $\mathfrak p$, this ensures that
\begin{equation*}
\mathcal P (m) \le \sum_{n \ge \frac{m}{N}} 2^{-n} \le 2^{-\frac{m}{N}+1}.
\end{equation*} 
\end{proof}


\section{Truncated passage times, proof of Theorems~\ref{propetape2}}
\label{secetape2}

Let $G$ be a probability measure on $[0,+\infty]$ such that $q := G([0,+\infty)) > p_c(d)$. Let $\delta_0 (q)$ be given by Lemma \ref{couplage}. Fix $M_0$ large enough so that $p_0:= G([0,M_0]) > p_c(d)$ and $q-p_0 \leq \delta_0$. For a $K\in [M_0, +\infty)$, define $p=p(k) = G([0,K])$. We define the following bound i.i.d. Bernoulli percolations :
\begin{itemize}
\item an edge $e$ is declared $p_0$-open if and only if $t_G(e) \leq M_0$,
\item an edge $e$ is declared $p$-open if and only if $t_G(e) \leq K$,
\item an edge $e$ is declared $q$-open if and only if $t_G(e)<\infty$.
\end{itemize}
These percolations are naturally coupled, thus we can use the modification of paths presented in the previous section. Denote as before by $\mathcal C_{G,M_0}$ the a.s. unique infinite cluster of the supercritical Bernoulli field $\{\ind{t_G(e) \le M_0}: \; e \in \Ed\}$. We call this field the $M_0$-percolation and its clusters the $M_0$-clusters. They correspond exactly to the $p_0$-percolation and the $p_0$-clusters.


\subsection{Estimation for the passage time of the modified path}

\begin{lemme}
\label{LEM:modifdeux}
There exists a positive constant $\rho'_d$ (depending only on $d$ and $M_0$) such that the following holds: 
Assume that $y \in \mathcal C_{G,M_0}$, that $z \in \mathcal C_{G,M_0}$, that the $N$-boxes containing $y$ and $z$ are good and belong to an infinite cluster of good boxes. Then for every $K\geq M_0$,
$$ T_{G} (y,z) \le T_{G^K} (y,z)\left( 1+\frac{\rho'_d  N^d}{K}\right) +\rho'_d N \, \sum_{C \in \Bad: \; C \cap \Gamma \ne \varnothing} \Card{C},$$
where $\Gamma$ is the lattice animal of $N$-boxes visited by an optimal path between $y$ and $z$ for the passage times with distribution $G^K$.
\end{lemme}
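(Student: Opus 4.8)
The plan is to run the path-modification Lemma~\ref{LEM:modif} on a geodesic for the truncated passage times $G^K$, and then to control the $G$-passage time of the edges of the modified path: the edges kept from the geodesic are $p$-open and hence carry their untruncated time unchanged, while the newly created edges are $p_0$-open and hence have time at most $M_0$.

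First I would fix $K \ge M_0$ and put $p = p(K) = G([0,K])$, so that $p_c(d) < p_0 \le p \le q$ and the three coupled Bernoulli percolations of this section are exactly those entering Lemma~\ref{LEM:modif}. Let $\gamma$ be a $G^K$-geodesic between $y$ and $z$, which we may take self-avoiding; it has finite $G^K$-length because $y$ and $z$ lie in the common infinite cluster $\mathcal C_{G,M_0} = \mathcal C_{p_0}$, which joins them by a path whose edges all have $G^K$-time at most $M_0$. Write $\Gamma$ for the lattice animal of $N$-boxes visited by $\gamma$, and set $\gamma_b = \{e \in \gamma : e \text{ is } p\text{-closed}\}$, the set of edges $e$ of $\gamma$ with $t_G(e) > K$. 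Each such edge has $t_{G^K}(e) = \min(t_G(e),K) = K$, so $K\,\Card{\gamma_b} \le T_{G^K}(\gamma) = T_{G^K}(y,z)$, hence $\Card{\gamma_b} \le T_{G^K}(y,z)/K$.

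Next I would apply Lemma~\ref{LEM:modif}: its hypotheses on $y$, $z$ and their $N$-boxes are exactly those assumed here. The one point calling for a comment is that the $G^K$-geodesic $\gamma$ need not be a $q$-open path; but this is harmless, because the construction in Lemma~\ref{LEM:modif} only alters $\gamma$ inside the $N$-boxes that meet $\gamma_b$ and leaves untouched the maximal sub-paths of $\gamma$ containing no $p$-closed edge, and any such sub-path is $p$-open, hence $q$-open, so property (iv) of good boxes applies to it exactly as in the proof of Lemma~\ref{LEM:modif}. This produces a $p$-open path $\gamma'$ from $y$ to $z$ such that $\gamma' \setminus \gamma$ is a disjoint collection of $p_0$-open self-avoiding paths with
\[
\Card{\gamma' \setminus \gamma} \,\le\, \rho_d\Big( N \sum_{C \in \Bad:\ C \cap \Gamma \ne \varnothing} \Card{C} + N^d \Card{\gamma_b} \Big),
\]
and such that every edge of $\gamma' \cap \gamma$ lies on a kept sub-path of $\gamma$, hence is $p$-open.

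Finally I would estimate, using that $\gamma'$ joins $y$ to $z$,
\[
T_G(y,z) \,\le\, T_G(\gamma') \,=\, \sum_{e \in \gamma' \cap \gamma} t_G(e) + \sum_{e \in \gamma' \setminus \gamma} t_G(e).
\]
For $e \in \gamma' \cap \gamma$ one has $t_G(e) \le K$, so $t_G(e) = t_{G^K}(e)$ and thus $\sum_{e \in \gamma' \cap \gamma} t_G(e) \le \sum_{e \in \gamma} t_{G^K}(e) = T_{G^K}(y,z)$; for $e \in \gamma' \setminus \gamma$ the edge is $p_0$-open, so $t_G(e) \le M_0$ and $\sum_{e \in \gamma' \setminus \gamma} t_G(e) \le M_0\,\Card{\gamma' \setminus \gamma}$. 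Plugging in the bound on $\Card{\gamma' \setminus \gamma}$ and then $\Card{\gamma_b} \le T_{G^K}(y,z)/K$ gives
\[
T_G(y,z) \,\le\, T_{G^K}(y,z)\Big(1 + \frac{M_0 \rho_d\,N^d}{K}\Big) + M_0 \rho_d\,N \sum_{C \in \Bad:\ C \cap \Gamma \ne \varnothing} \Card{C},
\]
which is the claimed inequality with $\rho'_d := M_0 \rho_d$, a constant depending only on $d$ and $M_0$. I expect the only genuinely delicate step to be the justification that Lemma~\ref{LEM:modif} applies to a $G^K$-geodesic even though it need not be $q$-open, together with the observation that the surviving edges of $\gamma$ carry the untruncated time $t_{G^K}(e) = t_G(e)$; the remaining estimates are routine bookkeeping.
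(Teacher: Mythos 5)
Your proof is correct and follows the same approach as the paper: apply Lemma~\ref{LEM:modif} to a $G^K$-geodesic $\gamma$, bound $\Card{\gamma_b}$ by $T_{G^K}(y,z)/K$, and estimate $T_G(\gamma')$ by separating the kept edges (which are $p$-open, so $t_G(e)=t_{G^K}(e)$ and their sum is at most $T_{G^K}(y,z)$) from the new edges (which are $p_0$-open, so $t_G(e)\leq M_0$), giving $\rho'_d = M_0\rho_d$.

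You also correctly spot a small inaccuracy in the paper's own proof, which simply asserts ``since its passage time is finite, $\gamma$ is $q$-open.'' Because $t_{G^K}(e)=\min(t_G(e),K)\leq K$ for every edge, the finiteness of $T_{G^K}(\gamma)$ does \emph{not} rule out edges of $\gamma$ with $t_G(e)=+\infty$, so $\gamma$ need not be $q$-open as a whole. Your fix is the right one: such an edge has $t_G(e)>K$, hence is $p$-closed and lands in $\gamma_b$; the construction of Lemma~\ref{LEM:modif} then bypasses it, and the only place $q$-openness is actually invoked is when applying property~(iv) of good boxes to the retained pieces $\gamma_j$, which contain no $p$-closed edge and are therefore $p$-open, hence $q$-open since $p\leq q$. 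So Lemma~\ref{LEM:modif}'s conclusion does hold for the $G^K$-geodesic even though its literal hypothesis is not met, and the rest of the argument goes through unchanged.
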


\begin{proof} 
As $y \in \mathcal C_{G,M_0}$ and $z \in \mathcal C_{G,M_0}$, the quantities $T_{G}(y,z)$ and  $T_{G^K} (y,z)$ are bounded by $M_0$ times the chemical distance in $\mathcal C_{G,M_0}$ between $y$ and $z$, and are thus finite. 
Let $\gamma$ be an optimal path between $y$ and $z$ for $T_{G^K} (y,z)$ : since its passage time is finite, $\gamma$ is $q$-open, and we can consider the modification $\gamma'$ given by Lemma \ref{LEM:modif}. Since $\gamma'$ is a path between $y$ and $z$, and $\gamma' \smallsetminus \gamma $ is $p_0$-open, we have
\begin{align*}
T_{G} (y,z)  &  \le \sum_{e \in \gamma'} t_G(e)  = \sum_{e \in \gamma \cap \gamma'}  t_G(e)  + \sum_{e \in \gamma' \setminus \gamma}  t_G(e) 
 \le \sum_{e \in \gamma_a}  t_G(e) + M_0\,  \Card{\gamma'\setminus \gamma} \,.
 \end{align*}
On one hand, since $\gamma$ is an optimal path between $y$ and $z$ for $T_{G^K} (y,z)$, we have
$$ \sum_{e \in \gamma_a}  t_G(e) = \sum_{e \in \gamma_a}  t_{G^K}(e) \leq \sum_{e \in \gamma}  t_{G^K}(e) =T_{G^K} (y,z) \,. $$
On the other hand, using the estimate on the cardinality of $\gamma'\setminus \gamma$ given in Lemma~\ref{LEM:modif}, and noticing that the number of edges in $\gamma_b$ is less than $ T_{G^K}(\gamma)/K$, we obtain
$$  \Card{\gamma'\setminus \gamma} \leq \rho_d \left(  \frac{N^d T_{G^K}(\gamma)}{K} + N \sum_{ C \in \Bad: \; C\cap \Gamma \ne \varnothing} \Card{  C}\right) \,. $$
\end{proof}

\begin{lemme}
\label{LEMetape2prime} 
Suppose that $G(\{0\})<p_c(d)$. For every $\varepsilon>0$ there exists $p_1 (\epsilon)>0$ and $A (\epsilon)>0$ such that for every $K \ge M_0$, 
for all x large enough,
$$ \P \left( \wT_{G}^{\mathcal C_{G,M_0}} (0,x) \le \wT_{G^K}^{\mathcal C_{G,M_0}} (0,x)\left( 1+\frac{A(\epsilon)}{K}\right) +\varepsilon \|x\|_1 \right)\ge p_1 (\epsilon).$$
\end{lemme}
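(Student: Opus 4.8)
The plan is to apply the deterministic path–modification bound of Lemma~\ref{LEM:modifdeux} to the endpoints $y=\w0^{\mathcal C_{G,M_0}}$ and $z=\wx^{\mathcal C_{G,M_0}}$, on an event of positive probability that puts these points in the infinite cluster of good boxes and that keeps the error term $\rho'_d N\sum_{C\in\Bad,\,C\cap\Gamma\ne\varnothing}\Card C$ below $\varepsilon\|x\|_1$; the last point goes through Lemmas~\ref{lem:new} and~\ref{lemcard3}, once we know that a $T_{G^K}$-geodesic, hence the animal $\Gamma$ of $N$-boxes it visits, has $O(\|x\|_1)$ edges. Fix $\varepsilon>0$ and choose parameters in this order. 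Let $C_\star=C_\star(G,M_0)$ be a constant, produced below, such that with probability tending to $1$, uniformly in $K\ge M_0$, some $T_{G^K}$-geodesic between $y$ and $z$ uses at most $C_\star\|x\|_1$ edges. Pick $\varepsilon'>0$ with $\varepsilon'\le\varepsilon/(2\rho'_d\tilde C_d C_\star)$, apply Lemma~\ref{lem:new} with this $\varepsilon'$ to obtain $\alpha=\alpha(\varepsilon')$ and $\mathfrak p=\mathfrak p(\varepsilon')\in(p_c^{\textrm{site}}(d),1)$, and then take $N=N(p_0,q,\mathfrak p)$ large enough for the stochastic comparison of Lemma~\ref{couplage}, where $p_0=G([0,M_0])$ and $q=G([0,+\infty))$. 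Set $A(\varepsilon):=\rho'_d N^d$; since $p_0$ and $q$ do not depend on $K$, neither do $N$ nor $A(\varepsilon)$. For $K\ge M_0$ put $p:=G([0,K])\in[p_0,q]$, so the renormalisation of Section~\ref{secrenorm} applies with the percolations declared in Section~\ref{secetape2}.

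Introduce three events. Let $\mathcal A$ be the event that $0\in\mathcal C_{G,M_0}$ and $x\in\mathcal C_{G,M_0}$ (so that $y=0$, $z=x$) and that the $N$-boxes $B_N(\mathbf 0)$ and $B_N(\mathbf i)$ containing $0$ and $x$ are $(p_0,q)$-good and lie in the a.s.\ unique infinite cluster of good boxes. Through the Liggett--Schonmann--Stacey domination (Lemma~\ref{couplage}) this event is implied by a positive-probability event for an i.i.d.\ supercritical Bernoulli site percolation, for which the FKG inequality holds, and since $\P(0\in\mathcal C_{G,M_0})>0$ there is $p_1(\varepsilon)>0$ with $\P(\mathcal A)\ge 2p_1(\varepsilon)$ for all $x$ with $\|x\|_1$ large; $\mathcal A$ does not depend on $K$. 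Let $\mathcal F_K$ be the event that $D^{\mathcal C_{G,M_0}}(0,x)\le\beta\|x\|_1$ (Antal--Pisztora, \eqref{EQ:AP}) and that some $T_{G^K}$-geodesic between $y$ and $z$ has at most $C_\star\|x\|_1$ edges; then $\P(\mathcal F_K^c)\to 0$ as $\|x\|_1\to\infty$, uniformly in $K\ge M_0$. Finally, with $m=\lfloor c_\star\|x\|_1\rfloor$, where $c_\star>0$ is chosen so that any animal of $N$-boxes joining $B_N(\mathbf 0)$ to $B_N(\mathbf i)$ has at least $m/N$ boxes, let $\mathcal B$ be the complement of the bad event of Lemma~\ref{lem:new} for this $\varepsilon'$ and this $m$, so that $\P(\mathcal B^c)\le e^{-m/N+1}\to 0$; $\mathcal B$ is $K$-independent.

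On $\mathcal A\cap\mathcal F_K\cap\mathcal B$ we conclude. Apply Lemma~\ref{LEM:modifdeux} with $y=0$, $z=x$, choosing for the optimal path a $T_{G^K}$-geodesic $\gamma$ with $\Card\gamma\le C_\star\|x\|_1$, and letting $\Gamma$ be the animal of $N$-boxes it visits; since $\wT_{G^K}^{\mathcal C_{G,M_0}}(0,x)=T_{G^K}(0,x)$ on $\mathcal A$,
$$ \wT_G^{\mathcal C_{G,M_0}}(0,x)\,\le\,\wT_{G^K}^{\mathcal C_{G,M_0}}(0,x)\Bigl(1+\frac{A(\varepsilon)}{K}\Bigr)+\rho'_d N\!\!\sum_{\substack{C\in\Bad\\ C\cap\Gamma\ne\varnothing}}\!\!\Card C . $$
By Lemma~\ref{lemcard3}, $\Card\Gamma\le\tilde C_d\bigl(1+(C_\star\|x\|_1+1)/N\bigr)$, while $\Card\Gamma\ge m/N$ because $\Gamma$ contains $\mathbf 0$ and $\mathbf i$; so on $\mathcal B$ Lemma~\ref{lem:new} gives $\sum_{C\cap\Gamma\ne\varnothing}\Card C<\varepsilon'\Card\Gamma$, whence $\rho'_d N\sum_{C\cap\Gamma\ne\varnothing}\Card C<\rho'_d\varepsilon'\tilde C_d(N+C_\star\|x\|_1+1)\le\varepsilon\|x\|_1$ once $\|x\|_1$ is large. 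This is the asserted inequality, and $\P(\mathcal A\cap\mathcal F_K\cap\mathcal B)\ge\P(\mathcal A)-\P(\mathcal F_K^c)-\P(\mathcal B^c)\ge p_1(\varepsilon)$ for all $\|x\|_1$ large, uniformly in $K\ge M_0$.

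The main obstacle is the event $\mathcal F_K$, i.e.\ the linear bound $C_\star\|x\|_1$ on the number of edges of a $T_{G^K}$-geodesic between $y$ and $z$, with $C_\star$ uniform in $K\ge M_0$: this is exactly where the hypothesis $G(\{0\})<p_c(d)$ is used, via the subcriticality of the set of edges with passage time below a threshold $\eta$ chosen so that $G([0,\eta])<p_c(d)$, combined with the $K$-uniform bound $\wT_{G^K}^{\mathcal C_{G,M_0}}(0,x)\le M_0\,D^{\mathcal C_{G,M_0}}(0,x)$; this is the analogue in the present setting of the corresponding input in Cox--Kesten~\cite{MR633228}. A secondary, more routine point is to check that $\P(\mathcal A)$ does not degrade as $x\to\infty$, for which one uses that through the domination of Lemma~\ref{couplage} the event $\mathcal A$ is implied by a positive-probability event of an i.i.d.\ (hence positively associated) field.
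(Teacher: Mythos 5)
Your proof follows the same overall strategy as the paper's: apply the path-modification estimate of Lemma~\ref{LEM:modifdeux} to a $T_{G^K}$-geodesic, control the number of edges in the geodesic using Kesten's Proposition~5.8 (where $G^K(\{0\})=G(\{0\})<p_c(d)$ is used and the constants are $K$-uniform) together with the Antal--Pisztora estimate, and control the contribution of the bad $N$-boxes via Lemma~\ref{lem:new} after tying $\Card{\Gamma}$ to the geodesic length through Lemma~\ref{lemcard3}. The choice $A(\varepsilon)=\rho'_d N^d$, the pre-tuning of $\varepsilon'$ in terms of the geodesic-length constant, and the $K$-independence of $N$ are all in line with the paper's argument. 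The one genuinely different step is the event decomposition, and this is where there is a gap.

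You replace the paper's event $F_x$ (which only concerns the goodness and infinite-cluster membership of the $N$-boxes around $0$ and $x$) by an event $\mathcal A$ that additionally requires $0\in\mathcal C_{G,M_0}$ and $x\in\mathcal C_{G,M_0}$, so that $\widetilde 0^{\mathcal C_{G,M_0}}=0$ and $\widetilde x^{\mathcal C_{G,M_0}}=x$. You then assert that $\P(\mathcal A)\ge 2p_1(\varepsilon)$ uniformly in $x$, on the grounds that through Lemma~\ref{couplage} the event $\mathcal A$ is implied by an i.i.d.\ site-percolation event to which FKG applies. That deduction is not valid as stated: the Liggett--Schonmann--Stacey domination relates only the macroscopic good-box field to a Bernoulli site field, and the microscopic event $\{0\in\mathcal C_{G,M_0}\}$ is not implied by any site-level event. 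Moreover $\{0\in\mathcal C_{G,M_0}\}$ is increasing in the $p_0$-edge percolation, whereas the box-goodness events (properties (i)--(iii) of Definition~\ref{DEF:goodbox}) are not monotone in the $p_0$-percolation, so FKG cannot be invoked directly to combine them; and a crude union bound $\P(\mathcal A)\ge\P(F_x)-2(1-\theta_{G,M_0})$ need not be positive when $\theta_{G,M_0}$ is small. The paper sidesteps this entirely: on $F_x$ alone, the regularized points $\widetilde 0^{\mathcal C_{G,M_0}},\widetilde x^{\mathcal C_{G,M_0}}$ automatically land in the $(3N)$-boxes around $0$ and $x$, and one then sums the (exponentially small in $\|x\|_1$) probabilities over all candidate pairs $(y,z)$ in those boxes, each with $y,z\in\mathcal C_{G,M_0}$ built into $E_{y,z}$. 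If you keep your event $\mathcal A$, you would need a genuine argument for $\P(\mathcal A)\ge 2p_1>0$ uniformly in $x$ (for instance a conditional or finite-energy argument showing that, given $B_N(\mathbf 0)$ good and in the infinite cluster of good boxes, $0$ connects to the crossing $p_0$-cluster with uniformly positive conditional probability); alternatively, switch to the paper's $F_x$ plus sum over $(y,z)$.

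A smaller point: as written, the event $\mathcal F_K$ requires $D^{\mathcal C_{G,M_0}}(0,x)\le\beta\|x\|_1$, and this already forces $0,x\in\mathcal C_{G,M_0}$, so $\P(\mathcal F_K^c)\ge 1-\theta_{G,M_0}^2$ and does not tend to $0$. You need to phrase $\mathcal F_K$ as an estimate on the event $\{D^{\mathcal C_{G,M_0}}(0,x)<\infty\}$ only (as in the paper's use of \eqref{EQ:AP}, which bounds $\P(\beta\|x\|_1\le D<\infty)$), or intersect $\mathcal F_K$ with $\mathcal A$ before estimating its complement. This is a matter of bookkeeping, but as stated the inequality $\P(\mathcal A\cap\mathcal F_K\cap\mathcal B)\ge\P(\mathcal A)-\P(\mathcal F_K^c)-\P(\mathcal B^c)$ does not yield a positive lower bound.
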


\begin{proof}
Let $\varepsilon>0$ be fixed. Let $p_c^{\textrm{site}}(d)$ be the critical parameter for independent Bernoulli site percolation on $\mathbb{Z}^d$. Choose $\alpha = \alpha(\epsilon)>0$ and then $\mathfrak p = \mathfrak p(\epsilon) \in (p_c^{\textrm{site}}(d),1)$, such that
\begin{align}
7^d \exp(-\alpha \varepsilon) & \le\frac{1}{3}, \label{choixalpha2} \\
\mathfrak p+\frac{e^\alpha 7^d (1-\mathfrak p)}{1-e^\alpha 7^d (1-\mathfrak p) } &\le \frac{3}{2} \label{choixp2}
\end{align}
as in Lemma \ref{lem:new}. Let $N = N(p_0, q, \mathfrak p (\epsilon))$ be large enough to have the stochastic comparison of Lemma~\ref{couplage} with this parameter $\mathfrak p (\epsilon)$. Let $K \ge M_0$. Fix a large $x$, at least large enough so that $\|x\|_1 \geq 12 d N$.

Let $F_x$ be the following good event: the $N$-boxes containing $0$ and $x$ and all the adjacent boxes are good and belong to an infinite cluster of good boxes. For any $y$ in the same $(3N)$-box as $0$, for any $z$ in the same $(3N)$-box as $x$, let $E_{y,z}$ be the event that $y\in\C_{G,M_0}$, $z\in \C_{G,M_0}$ and the $N$-boxes containing $y$ and $z$ are good and belong to an infinite cluster of good boxes. For any such $(y,z)$, we have $\| y-z\|_1 \leq \|x\|_1 + 6 dN \leq 2 \|x\|_1 $. For any given $\beta''$, we have
\begin{align}
\label{eqnew1}
& \P \left( \wT_{G}^{\mathcal C_{G,M_0}} (0,x) \ge \wT_{G^K}^{\mathcal C_{G,M_0}} (0,x)\left( 1+\frac{\rho_d' N (\epsilon)^d}{K}\right) + 4 \,\varepsilon \,\tilde C_d \, \rho'_d\,\beta'' \|x\|_1 \right) \nonumber \\
&  \leq \PP[F_x^c]  \nonumber\\
&+ \PP \left( F_x \cap \left\{\wT_{G}^{\mathcal C_{G,M_0}} (0,x) \ge \wT_{G^K}^{\mathcal C_{G,M_0}} (0,x)\left( 1+\frac{\rho_d' N (\epsilon)^d}{K}\right) + 4 \,\varepsilon \,\tilde C_d \, \rho'_d\,\beta'' \|x\|_1 \right\} \right) \nonumber\\
&  \leq \PP[F_x^c]  \nonumber\\
&+ \sum_{y,z}\PP \left( \begin{array}{c} F_x \cap \{ \widetilde{0}^{\mathcal C_{G,M_0}} = y \,,\, \widetilde{x}^{\mathcal C_{G,M_0}} = z  \} \\ \cap  \left\{T_{G} (y,z) \ge T_{G^K} (y,z)\left( 1+\frac{\rho_d' N (\epsilon)^d}{K}\right) + 4 \,\varepsilon \,\tilde C_d \, \rho'_d\,\beta'' \|x\|_1 \right\} \end{array} \right) \nonumber\\
& \leq \PP[F_x^c] \nonumber\\
&+ \sum_{y,z} \PP \left(E_{y,z} \cap \left\{T_{G} (y,z) \ge T_{G^K} (y,z)\left( 1+\frac{\rho_d' N (\epsilon)^d}{K}\right) + 2 \,\varepsilon \,\tilde C_d \, \rho'_d\,\beta'' \|y-z \|_1 \right\} \right) \,,
\end{align}
where the sum is over every $y$ in the same $(3N)$-box as $0$, and every $z$ in the same $(3N)$-box as $x$ - indeed, on the event $F_x$, we know that $\C_{G,M_0}$ intersects the box of $0$ (resp. $x$) thus $\widetilde{0}^{\mathcal C_{G,M_0}}$ (resp. $\widetilde{x}^{\mathcal C_{G,M_0}}$) belongs to the same $(3N)$-box as $0$ (resp. $x$). Note that the stochastic comparison and the FKG inequality ensure that
\begin{equation}
\label{eqnew2}
\P(F_x) \geq \theta_{\textrm{site},\mathfrak p(\epsilon) }^{2\dot 3^d} >0\,,
\end{equation}
where $\theta_{\textrm{site},\mathfrak p(\epsilon) }$ denotes the density of the infinite cluster in a supercritical vertex i.i.d. Bernoulli percolation of parameter $\mathfrak p(\epsilon)$.

Consider a couple $(y,z)$ as in \eqref{eqnew1}. On the event $E_{x,y}$, we have
$$T_{G^K} (y,z) \leq M_0 D^{\mathcal C_{G,M_0}}(y,z) < \infty \,.$$
Let $\gamma_{y,z}$ be a geodesic for $T_{G^K} (y,z)$, and let $\Gamma_{y,z}$ be the lattice animal of the $N$-boxes visited by this geodesic. By Lemma \ref{LEM:modifdeux}, we have
\begin{align*}
&  \PP \left(E_{y,z} \cap \left\{T_{G} (y,z) \ge T_{G^K} (y,z)\left( 1+\frac{\rho_d' N (\epsilon)^d}{K}\right) + 2 \,\varepsilon \,\tilde C_d \, \rho'_d\,\beta'' \|y-z \|_1 \right\} \right)  \\
& \leq \PP \left( E_{y,z} \cap \left\{ \sum_{C \in \Bad: \; C \cap \Gamma_{y,z} \ne \varnothing} \Card{C} \geq  \frac{2 \,\varepsilon \,\tilde C_d \, \beta'' \|y-z \|_1}{N(\epsilon)} \right\}   \right)\,.
\end{align*}
Note that by construction, on the event $E_{y,z}$, we have $ |\Gamma_{y,z}| \ge \|y-z\|_1/N$. On the other hand Lemma \ref{lemcard3} implies that $|\Gamma_{y,z}| \leq \tilde C_d (1+  (|\gamma_{y,z}|+1 ) /N ) -1  \leq 2 \tilde C_d |\gamma_{y,z}|/N $ at least for $x$ large enough (remember that $|\gamma_{y,z}| \geq \|y-z\|_1 \geq \|x\|_1- 6dN \geq \|x\|_1 /2$). Thus we obtain
\begin{align}
\label{eqnew3}
&  \PP \left(E_{y,z} \cap \left\{T_{G} (y,z) \ge T_{G^K} (y,z)\left( 1+\frac{\rho_d' N (\epsilon)^d}{K}\right) + 2 \,\varepsilon \,\tilde C_d \, \rho'_d\,\beta'' \|y-z \|_1 \right\} \right) \nonumber \\
& \leq \PP \left( E_{y,z} \cap \{ |\gamma_{y,z}| > \beta'' \|y-z\|_1 \} \right) \nonumber \\
& \qquad + \PP \left( E_{y,z} \cap \left\{ \sum_{C \in \Bad: \; C \cap \Gamma_{y,z} \ne \varnothing} \Card{C} \geq   |\Gamma_{y,z}| \right\}   \right)\,.
\end{align}
Since $G^K(\{0\})=G(\{0\})<p_c(d)$, there exist positive constants $A',B',\beta'$ such that  for all $k\in \NN^*$ (see Proposition 5.8 in Kesten~\cite{Kesten:StFlour}):
\begin{equation}
\label{eqnew4}
\PP \left[ \exists r \textrm{ s.a. path starting at } y \textrm{ s.t. } |r| \geq k \textrm{ and } T_{G^K}(r) \leq \beta' k \right] \leq  A' \exp(-B'k).
\end{equation}
Let $\beta$ be given by Antal and pisztora's estimate \eqref{EQ:AP}. By \eqref{EQ:AP} we have
\begin{align}
\label{eqnew5}
\PP (E_{y,z} \cap \{ D^{\mathcal C_{G,M_0}}(y,z) \geq \beta \| y-z \|_1\} ) &\,\leq  \P( \beta \|y-z\|_1 \leq D^{\mathcal C'_{G,M_0}}(y,z) < + \infty  ) \nonumber \\
&\,\leq\, \hat A\exp(-\hat B \|y-z\|_1) \,.
\end{align}
Fix $\beta''=\frac{ \beta M_0}{\beta'}>0$. Combining \eqref{eqnew4} and \eqref{eqnew5} we obtain the existence of positive constants $A'',B''$ such that
\begin{align}
\label{eqnew6}
 &\PP \left( E_{y,z} \cap \{ |\gamma_{y,z}| >\beta'' \|y-z\|_1 \} \right) \nonumber\\
 & \quad \,\leq \, \PP (E_{y,z} \cap \{ D^{\mathcal C_{G,M_0}}(y,z) \geq \beta \| y-z \|_1\} )\nonumber\\
 & \qquad +  \PP \left( E_{y,z} \cap \{ T_{G^K} (y,z) \leq M_0 \beta \|y-z\|_1  \}\cap \{ |\gamma_{y,z}| >\beta'' \|y-z\|_1 \} \right) \nonumber \\
& \quad\,\leq \,\hat A e^{-\hat B \|y-z\|_1} + A' e^{-B' \beta'' \|y-z\|_1}  \,\leq\, A'' e^{-B'' \|y-z\|_1 }\,.
\end{align}
By Lemma \ref{lem:new}, with the choices \eqref{choixalpha2} and \eqref{choixp2} we made for $\alpha$ and $\mathfrak p$, we know that
\begin{align}
\label{eqnew7}
& \PP \left( E_{y,z} \cap \left\{ \sum_{C \in \Bad: \; C \cap \Gamma_{y,z} \ne \varnothing} \Card{C} \geq   |\Gamma_{y,z}| \right\}   \right)\nonumber\\
& \quad \,\leq \, \P \left( \exists \Gamma \in \Animals, \;  
|\Gamma| \ge \frac{\|y-z\|_1}{N(\epsilon)} , \; \sum_{ C \in \Bad: \; C \cap \Gamma \ne \varnothing}  |C| \ge \varepsilon \Card{\Gamma}
\right)=\mathcal P (\|y-z\|_1) \nonumber\\
& \quad \,\leq \, 2^{-\frac{\|y-z\|_1}{N(\epsilon)}+1}.
\end{align} 
Combining \eqref{eqnew1}, \eqref{eqnew2}, \eqref{eqnew3}, \eqref{eqnew6} and \eqref{eqnew7}, we obtain that
\begin{align*}
\label{eqnew1}
& \P \left( \wT_{G}^{\mathcal C_{G,M_0}} (0,x) \ge \wT_{G^K}^{\mathcal C_{G,M_0}} (0,x)\left( 1+\frac{\rho_d' N (\epsilon)^d}{K}\right) + 4 \,\varepsilon \,\tilde C_d \, \rho'_d\,\beta'' \|x\|_1 \right) \\
& \quad \,\leq \, 1- \theta_{\textrm{site},\mathfrak p(\epsilon) }^{2\dot 3^d} + \sum_{y,z} \left( A'' e^{-B'' \|y-z\|_1 }+ 2^{-\frac{\|y-z\|_1}{N(\epsilon)}+1} \right) \\
& \quad \,\leq \, 1- \theta_{\textrm{site},\mathfrak p(\epsilon) }^{2\dot 3^d} + 2 (3N(\epsilon))^d \left( A'' e^{-B'' \|x\|_1/2 }+ 2^{-\frac{\|x\|_1}{2 N(\epsilon)}+1} \right) \\
& \quad \le 1-p_1 (\epsilon),
\end{align*}
for a well-chosen $p_1 (\epsilon)>0$ and every $x$ large enough. 
\end{proof}

\subsection{Proof of Theorem~\ref{propetape2}}

If $G(\{0\})\ge p_c(d)$, then $\mu_{G^K} (x)=\mu_{G} (x)=0$, so there is nothing to prove. Suppose from now on that $G(\{0\})<p_c(d)$.

For any $\epsilon >0$, consider $p_1(\epsilon)$ and $A(\epsilon)$ as given by Lemma~\ref{LEMetape2prime}, and define, for
$K\ge M_0$, $\Psi(K)=\inf_{\epsilon>0} \frac{A(\epsilon)}{K}+\epsilon$.
It is easy to see that $\lim_{K\to +\infty} \Psi(K)=0$.
Fix $\varepsilon>0$, $\delta>0$, $K \ge M_0$ and $x \in \Zd$.

With the convergence (2) in Proposition~\ref{thmcv} and Lemma~\ref{LEMetape2prime}, we can choose $n$ large enough such that
\begin{align*}
\P \left( \mu_G (x) -\delta \le \frac{\wT_G^{\C_{G,M_0}} (0,nx)}{n}  \right) & \ge 1-\frac{p_1(\epsilon)}{3}, \\
\P \left( \frac{\wT_{G^K}^{\C_{G,M_0}} (0,nx)}{n} \le \mu_{G^K} (x) +\delta \right) & \ge 1-\frac{p_1(\epsilon)}{3}, \\
\P \left( \wT_{G}^{\mathcal C_{G,M_0}} (0,nx) \le \wT_{G^K}^{\mathcal C_{G,M_0}} (0,nx)\left( 1+\frac{A(\epsilon)}{K}\right) +\varepsilon n\|x\|_1 \right) & \ge p_1(\epsilon).
\end{align*}
For every $\epsilon >0$, for every $\delta >0$, on the intersection of these 3 events, that has positive probability, we obtain
\begin{align*}
\forall K  \ge M_0,x\in\Zd  \quad \mu_G (x) -\delta \le (\mu_{G^K} (x)+\delta) \left( 1+\frac{A(\epsilon)}{K}\right) + \varepsilon \|x\|_1,
\end{align*}
and by letting $\delta$ going to $0$ we get
$$\forall \epsilon >0 ,K\ge M_0,x\in\Zd\quad  \mu_G (x)  \le \mu_{G^K} (x) \left( 1+\frac{A(\epsilon)}{K}\right)+\varepsilon \|x\|_1.$$
It follows that for every $\epsilon >0$,
\begin{align*}
0\le \mu_{G^K} (x)-\mu_G(x)&\le \mu_{G^K}\frac{A(\epsilon)}{K}+\epsilon\|x\|_1
\le (\mu_{G}(x)+\|x\|_1)\left( \frac{A(\epsilon)}{K}+\varepsilon \right)\,,
\end{align*}
thus, by optimizing $\epsilon$,
$$0\le \mu_{G^K} (x)-\mu_G(x) \le (\mu_{G}(x)+\|x\|_1)\Psi(K) \,.$$
Theorem~\ref{propetape2} is proved by using the fact that $\displaystyle \lim_{K\to +\infty} \Psi(K)=0$.


\section{Continuity of the time constant, proof of Theorem \ref{thmcont}}
\label{seccontfpp}

We first state two properties:

\begin{lemme}
\label{propetape1}
Suppose that $G, (G_n)_{n\in \NN}$ are probability measures on $[0,+\infty]$ such that $G(\RRp) >p_c(d)$ and $G_n(\RRp) >p_c(d)$ for all $n\in \NN$. If $G_n \overset{d}{\rightarrow} G$ and $G_n  \gestoch G$ for all $n$, then
$$ \forall x\in \ZZ^d\,,\quad \miniop{}{\limsup}{n\rightarrow +\infty } \mu_{G_n} (x)\, \leq\, \mu_G(x) \,.$$
\end{lemme}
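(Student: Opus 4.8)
The plan is to reduce the statement, via the truncation estimate of Theorem~\ref{propetape2}, to the case of bounded passage times, and then to treat that case by a Cox--Kesten type counting argument at the level of edges. First I would realize all passage times from one family $(u(e))_{e\in\EE^d}$ of i.i.d.\ uniforms as in~\eqref{eqdeftF}; since $G_n\gestoch G$ this gives $t_{G_n}(e)\ge t_G(e)$ for every $e$, and $t_{G_n}(e)\to t_G(e)$ by Lemma~\ref{propCV}. For a fixed $K>0$, the truncated increments $\delta_n^K(e):=\min(t_{G_n}(e),K)-\min(t_G(e),K)$ then lie in $[0,K]$, tend to $0$ as $n\to\infty$, and — using $G_n^K\gestoch G^K$, $G_n^K\overset{d}{\rightarrow}G^K$, the pointwise convergence of pseudo-inverses from part~(i) of the proof of Lemma~\ref{propCV}, and dominated convergence — satisfy $\EE[\delta_n^K(e)]\to 0$; in particular $b_n(\varepsilon):=\PP[\delta_n^K(e)>\varepsilon]\to 0$ for each $\varepsilon>0$. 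If $G(\{0\})\ge p_c(d)$ then $\mu_G\equiv 0$ by Proposition~\ref{thmpropmu}, and $\limsup_n\mu_{G_n}(x)=0$ follows from a short direct comparison with a supercritical Bernoulli chemical distance (for $G(\{0\})>p_c(d)$: bound $\mu_{G_n}(x)$ by $t$ times the Antal--Pisztora constant of the percolation $\{t_{G_n}(e)<t\}$ and let $t\downarrow0$; the critical case $G(\{0\})=p_c(d)$ is handled apart). So I would assume from now on that $G(\{0\})<p_c(d)$, whence $G_n(\{0\})<p_c(d)$ for $n$ large.

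For the reduction, the proof of Theorem~\ref{propetape2} applied to $G_n$ actually gives $0\le\mu_{G_n}(x)-\mu_{G_n^K}(x)\le(\mu_{G_n}(x)+\|x\|_1)\Psi_{G_n}(K)$ with $\Psi_{G_n}(K)\to0$; its constants depend on $G_n$ only through $G_n(\RRp)$, $G_n([0,M_0])$ and $G_n(\{0\})$ (with $M_0$ the truncation level appearing in that proof), all of which stay, for $n$ large, in compact subsets of $(p_c(d),1]$ resp.\ $[0,p_c(d))$ by the weak convergence and the domination. Tracking the uniform forms of Lemma~\ref{couplage} and of estimate~\eqref{eqnew4} then produces a single $\Psi^*(K)\to0$ with $\Psi_{G_n}(K)\le\Psi^*(K)$ for all $n$ large. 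Thus, for $K$ with $\Psi^*(K)<1$, $\mu_{G_n}(x)\le\frac{\mu_{G_n^K}(x)+\|x\|_1\Psi^*(K)}{1-\Psi^*(K)}$ for $n$ large; since moreover $\mu_{G^K}(x)\le\mu_G(x)$ (Lemma~\ref{propmon}, as $G^K\lestoch G$), it is enough to prove that for each fixed $K$ one has $\limsup_{n}\mu_{G_n^K}(x)\le\mu_{G^K}(x)$, and then to let $K\to\infty$.

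For this bounded case, fix $K$ (so $G^K(\{0\})=G(\{0\})<p_c(d)$) and $\varepsilon>0$, and for each $m$ pick a self-avoiding geodesic $\gamma_m$ for $T_{G^K}(0,mx)$. By~\eqref{eqnew4} for $G^K$ and Borel--Cantelli, $|\gamma_m|\le c_0m$ for all large $m$ a.s., with $c_0$ not depending on $n$; and $T_{G^K}(0,mx)/m\to\mu_{G^K}(x)$ a.s. Next, a union bound over the at most $(c_0m+1)(2d)^{c_0m}$ self-avoiding paths $r$ issued from $0$ with $|r|\le c_0m$, together with a Chernoff bound for $\sum_{e\in r}\mathbbm{1}_{\{\delta_n^K(e)>\varepsilon\}}\sim\mathrm{Bin}(|r|,b_n(\varepsilon))$ and the fact that $b_n(\varepsilon)\to0$, shows that for $n$ large $\PP\bigl[\exists\,r:\,|r|\le c_0m,\ \Card{\{e\in r:\delta_n^K(e)>\varepsilon\}}>\varepsilon m\bigr]\le e^{-m}$ for all large $m$; by Borel--Cantelli, a.s.\ $\gamma_m$ uses at most $\varepsilon m$ edges with $\delta_n^K(e)>\varepsilon$ for all large $m$. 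Hence a.s., for all large $m$,
\begin{align*}
T_{G_n^K}(0,mx) &\le \sum_{e\in\gamma_m}\bigl(\min(t_G(e),K)+\delta_n^K(e)\bigr)\le T_{G^K}(0,mx)+\varepsilon\,|\gamma_m|+K\varepsilon m\\
&\le T_{G^K}(0,mx)+(c_0+K)\varepsilon\,m .
\end{align*}
Dividing by $m$ and sending $m\to\infty$ gives $\mu_{G_n^K}(x)\le\mu_{G^K}(x)+(c_0+K)\varepsilon$ for $n$ large; letting $\varepsilon\downarrow0$ finishes this case, and with it the lemma.

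The step I expect to be the main obstacle is exactly the bounded case, namely controlling the \emph{excess} $\sum_{e\in\gamma_m}\delta_n^K(e)$ along a geodesic that is itself a function of the environment, so that a naive expectation is not available; the way around it is to peel off the (at most $\varepsilon m$) edges of $\gamma_m$ with $\delta_n^K(e)>\varepsilon$ by a path-by-path union bound that uses only $b_n(\varepsilon)\to0$, the remaining edges contributing a total excess $\le\varepsilon|\gamma_m|$. An alternative route, bypassing Theorem~\ref{propetape2}, is to feed the same splitting into the renormalization of Section~\ref{secrenorm}: take $\gamma$ a $T_{G^K}$-geodesic between $\widetilde 0^{\,\C_{G_n,M_0}}$ and $\widetilde{mx}^{\,\C_{G_n,M_0}}$, bypass $\{e\in\gamma:t_{G_n}(e)>K\}\cup\{e\in\gamma:\delta_n^K(e)>\varepsilon\}$ by $p_0$-open detours exactly as in Lemma~\ref{LEM:modif}, bound the bad-block contribution by Lemma~\ref{lem:new}, and use $T_{G^K}(\gamma)\le T_G(\widetilde 0^{\,\C_{G_n,M_0}},\widetilde{mx}^{\,\C_{G_n,M_0}})=\wT_G^{\C_{G_n,M_0}}(0,mx)$ via Lemma~\ref{propstab}; there the extra care needed is to keep the block scale $N$ uniform in $n$, which the uniform forms of Lemma~\ref{couplage} supply.
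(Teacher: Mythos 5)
Your proposal would, after some polishing, prove the lemma, but it takes a genuinely different and considerably heavier route than the paper. The paper's argument is short and avoids all of the renormalization/truncation machinery. Fix a dominating law $H^+$ as in Lemma~\ref{lemrepart}(i) and a level $M$ with $H^+([0,M])>p_c(d)$, so that the regularization cluster $\C_{H^+,M}$ and the reference points $\widetilde 0^{\,\C_{H^+,M}},\widetilde{kx}^{\,\C_{H^+,M}}$ do not depend on $n$. For a fixed finite path $r$, $T_{G_n}(r)\to T_G(r)$ by Lemma~\ref{propCV}; taking the infimum over $r$ and combining with the coupling inequality $\wT_{G}^{\C_{H^+,M}}\le\wT_{G_n}^{\C_{H^+,M}}$, one gets almost sure convergence $\wT_{G_n}^{\C_{H^+,M}}(0,kx)\to\wT_{G}^{\C_{H^+,M}}(0,kx)$ at every fixed scale $k$, and domination by the integrable $\wT_{H^+}^{\C_{H^+,M}}(0,kx)$ upgrades this to $L^1$. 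Then the inf-over-$k$ characterization $\mu_G(x)=\inf_k\EE[\wT_G^{\C_{H^+,M}}(0,kx)]/k$ together with $\mu_{G_n}(x)\le\EE[\wT_{G_n}^{\C_{H^+,M}}(0,kx)]/k$ for the chosen $k$ gives $\limsup_n\mu_{G_n}(x)\le\mu_G(x)$ in a few lines, with no case split on $G(\{0\})$ and no appeal to Theorem~\ref{propetape2}.

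Your route --- reduce to bounded times via the truncation estimate, then run a Cox--Kesten style counting argument over self-avoiding paths --- is the strategy the paper uses for the \emph{other} inequality (Lemma~\ref{propetape3}, where no a.s.\ monotonicity saves you), and it also works here, but at a real cost. Two specific soft spots you should close if you keep this route. First, the case $G(\{0\})\ge p_c(d)$ is not cosmetic: weak convergence and $G_n\gestoch G$ only give $G_n(\{0\})\le G(\{0\})$, so you may have $\mu_{G_n}>0$ for all $n$ while $\mu_G=0$, and your Antal--Pisztora comparison requires a uniform-in-$n$ control of the chemical-distance constant for $\{t_{G_n}(e)<t\}$ as both $t\downarrow0$ and $n\to\infty$, which is exactly the kind of uniformity the paper's proof sidesteps entirely. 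Second, the claim that the $\Psi_{G_n}(K)$ from the proof of Theorem~\ref{propetape2} admit a uniform majorant $\Psi^*(K)$ is plausible but is itself a nontrivial uniformity statement: it requires the constants of Lemma~\ref{couplage}, of Antal--Pisztora, and of~\eqref{eqnew4} to be uniform over the family $\{G_n\}$, which means re-running that whole proof while tracking parameter dependence, rather than quoting the theorem. The bounded-case Chernoff/union-bound step is correct as written (the key point being that along a self-avoiding path the indicators $\mathbbm{1}_{\{\delta_n^K(e)>\varepsilon\}}$ are i.i.d.\ Bernoulli$(b_n(\varepsilon))$ with $b_n(\varepsilon)\to0$), but you should note explicitly that a geodesic, or an $\varepsilon$-near-geodesic, exists because $G^K$ is bounded, and that $G^K(\{0\})=G(\{0\})<p_c(d)$ guarantees~\eqref{eqnew4}. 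In short: your argument is recoverable but overbuilt, and the monotone structure ($G_n\gestoch G$) is exactly what lets the paper avoid all of it via dominated convergence and the subadditive infimum formula.
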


\begin{lemme}
\label{propetape3}
Suppose that $G, (G_n)_{n\in \NN}$ are probability measures on $[0,R]$ for some common and finite $R\in \RRp$. If $G_n \overset{d}{\rightarrow} G$, then
$$ \forall x\in \ZZ^d\,,\quad \lim_{n\rightarrow \infty} \mu_{G_n} (x) \,=\, \mu_G (x) \,. $$
\end{lemme}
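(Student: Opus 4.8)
The plan is to split the claim into an upper and a lower bound, reduce via the coupling of Section~\ref{seccoupling} to a single semicontinuity estimate, and treat that estimate by a bypass construction. First I would dispose of the case $G(\{0\})\ge p_c(d)$: then $\mu_G\equiv 0$ by Proposition~\ref{thmpropmu}, and if $\overline{G}_n$ denotes the law with $\overline{\fG}_n=\max(\fG,\fG_n)$, then $\overline{G}_n\gestoch G_n$, $\overline{G}_n\gestoch G$ and $\overline{G}_n\overset{d}{\rightarrow}G$, so Lemmas~\ref{propmon} and~\ref{propetape1} give $0\le\mu_{G_n}\le\mu_{\overline{G}_n}\to\mu_G=0$. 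Assume henceforth $G(\{0\})<p_c(d)$. Define also $\underline{G}_n$ by $\underline{\fG}_n=\min(\fG,\fG_n)$; then $\underline{G}_n\lestoch G_n\lestoch\overline{G}_n$ and $\underline{G}_n\lestoch G\lestoch\overline{G}_n$, both laws are carried by $[0,R]$ and converge weakly to $G$, and by Lemma~\ref{propmon}
\[
|\mu_{G_n}(x)-\mu_G(x)|\le\bigl(\mu_{\overline{G}_n}(x)-\mu_G(x)\bigr)+\bigl(\mu_G(x)-\mu_{\underline{G}_n}(x)\bigr),
\]
both terms being nonnegative; the first tends to $0$ by Lemma~\ref{propetape1}. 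So the whole statement reduces to proving, writing $H_n:=\underline{G}_n$ (which satisfies $H_n\lestoch G$, $H_n\overset{d}{\rightarrow}G$, $\supp H_n\subseteq[0,R]$, and, by Lemma~\ref{propCV} and the Portmanteau theorem, $H_n(\{0\})\to G(\{0\})<p_c(d)$, hence $H_n(\{0\})<p_c(d)$ for $n$ large):
\[
\liminf_{n\to\infty}\mu_{H_n}(x)\ge\mu_G(x)\qquad\text{for all }x\in\Zd.
\]

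The core of the argument is a bypass construction in the spirit of Section~\ref{secrenorm}, run in the coupling of Section~\ref{seccoupling}, where $t_{H_n}(e)\le t_G(e)\le R$ and, by Lemma~\ref{propCV}, $t_{H_n}(e)\to t_G(e)$ a.s. Fix $x$, $\delta>0$, $\epsilon>0$, and call an edge $e$ \emph{$n$-bad} if $t_G(e)-t_{H_n}(e)>\delta$; by bounded convergence $\eta_n:=\PP[e\text{ is }n\text{-bad}]\to 0$. The point is to feed this into the renormalisation with an \emph{augmented} notion of good box: a box $B_N(\mathbf i)$ is \emph{$n$-good} if every edge of $B'_N(\mathbf i)$ is non-$n$-bad, and one takes ``$p_0$-open''$=$``$q$-open''$=$``all edges'' (so $p_0=q=1$, and conditions (i)--(iv) of Definition~\ref{DEF:goodbox} hold trivially, with $\beta$ taken large, e.g.\ $\beta=2d$, since then $D^{\C'_{p_0}}(\cdot,\cdot)=\|\cdot\|_1$). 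Since $\supp G\subseteq[0,R]$, the times $T_G,T_{H_n}$ are finite and no regularisation is needed: Proposition~\ref{thmcv} applied with $M=R$ (so that $\C_{G,R}=\C_{H_n,R}=\Zd$) gives $T_G(0,kx)/k\to\mu_G(x)$ and $T_{H_n}(0,kx)/k\to\mu_{H_n}(x)$ a.s. For an $H_n$-geodesic $\gamma$ from $0$ to $kx$ whose endpoint-boxes are $n$-good and lie in an infinite cluster of $n$-good boxes, the adaptation of Lemma~\ref{LEM:modif} to this setting yields a path $\gamma'$ from $0$ to $kx$ \emph{all of whose edges are non-$n$-bad}, with
\[
|\gamma'\setminus\gamma|\le\rho_d N^d\sum_{C\in\Bad:\;C\cap\Gamma\ne\varnothing}|C|,
\]
$\Gamma$ being the animal of $N$-boxes met by $\gamma$. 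The decisive gain over Lemma~\ref{LEM:modif} is that there is \emph{no} term in the number of $n$-bad edges of $\gamma$: an $n$-bad edge can only lie inside an $n$-bad box, so it is automatically absorbed by a bad-component bypass and never requires a local in-box detour. Since $t_G(e)\le t_{H_n}(e)+\delta$ on non-$n$-bad edges and $t_{H_n}(e)\le R$ always, this gives
\[
T_G(0,kx)\le\sum_{e\in\gamma'}t_G(e)\le T_{H_n}(0,kx)+(R+\delta)\,|\gamma'\setminus\gamma|+\delta\,|\gamma|.
\]

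The remaining step is the probabilistic bookkeeping, carried out as in Lemma~\ref{LEMetape2prime} and the proof of Theorem~\ref{propetape2}, but with $N$ now \emph{fixed once and for all}: since $\eta_n\to0$, for $n$ large the $n$-good field (which has finite range of dependence) has marginals as close to $1$ as wished and hence, by the Liggett--Schonmann--Stacey theorem~\cite{LSS} (as in the proof of Lemma~\ref{couplage}), stochastically dominates an i.i.d.\ Bernoulli site-percolation of parameter $\mathfrak p(\epsilon)$ as in Lemma~\ref{lem:new}. Lemmas~\ref{lem:new} and~\ref{lemcard3} then bound $\sum_{C\in\Bad:\;C\cap\Gamma\ne\varnothing}|C|$ by $\epsilon|\Gamma|\le 2\tilde C_d\epsilon|\gamma|/N$ off an event of probability $\le 2^{-ck}$, and Kesten's estimate on the length of geodesics (Proposition~5.8 in~\cite{Kesten:StFlour}), available because $H_n(\{0\})<p_c(d)$, bounds $|\gamma|$ by $\beta''\|x\|_1 k$ off an event of probability $\le A''e^{-B''k}$. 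On the event $F_x$ that the $N$-boxes of $0$ and $kx$ and their neighbours are $n$-good and lie in the infinite $n$-good cluster --- of probability bounded below by a positive constant uniformly in $k$, by FKG and the domination --- intersected with these two overwhelmingly likely events, one gets for $k$ large
\[
T_G(0,kx)\le T_{H_n}(0,kx)+C(N,R,d)\,(\epsilon+\delta)\,\|x\|_1\,k,
\]
and combining with the a.s.\ convergences above and letting the auxiliary precision go to $0$ yields $\mu_G(x)\le\mu_{H_n}(x)+C(\epsilon+\delta)\|x\|_1$ for all $n$ large. Letting $\epsilon,\delta\to0$ gives $\liminf_n\mu_{H_n}(x)\ge\mu_G(x)$, whence $\mu_{G_n}(x)\to\mu_G(x)$.

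I expect the main obstacle to be the adaptation of Lemma~\ref{LEM:modif} to the augmented good-box definition --- verifying rigorously that, once ``all edges non-$n$-bad'' is built into goodness, the ``number of $n$-bad edges'' term of the bound in Lemma~\ref{LEM:modif} genuinely disappears and is replaced by a term controlled by $\sum_{\Bad}|C|$ --- together with keeping the additive error at order $(\epsilon+\delta)\|x\|_1$ with $N$ \emph{fixed}: this relies on the cancellation between the factor $N$ from the bypass lengths and the factor $1/N$ in $|\Gamma|\lesssim|\gamma|/N$ (Lemma~\ref{lemcard3}), and on uniform-in-$n$ control, for $n$ large, of the constants in Kesten's estimate and of $H_n(\{0\})$ remaining below $p_c(d)$, both consequences of $H_n\overset{d}{\rightarrow}G$ and $G(\{0\})<p_c(d)$.
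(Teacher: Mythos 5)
Your proof is correct in spirit but takes a genuinely different route from the paper's. The paper's actual proof of Lemma~\ref{propetape3} is much lighter: after the same reduction to $\liminf_n\mu_{\underline G_n}(x)\ge\mu_G(x)$ via Lemma~\ref{propmon} and Lemma~\ref{propetape1}, it bounds $\PP[T_{\underline G_n}(0,kx)\le T_G(0,kx)-\eps k]$ directly by a union bound over self-avoiding paths from $0$ of length $\le Ak$ (of which there are at most $(2d)^{Ak}$), controlling $\sum_{e\in r}(t_G(e)-t_{\underline G_n}(e))\ge\eps k$ by Markov's inequality applied to an exponential moment (which tends to $1$ by $t_{\underline G_n}(e)\to t_G(e)$ a.s.\ and bounded convergence), while paths of length $>Ak$ are discarded using Kesten's Proposition 5.8 applied once to the dominating law $H^-$ of Lemma~\ref{lemrepart}~(ii) and the trivial bound $T_G(0,kx)\le Rk\|x\|_1$; Borel--Cantelli finishes. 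No renormalization, no good/bad boxes, no bypass construction. (Indeed a remark before that proof explains that this shortcut, inspired by Kesten's Saint-Flour sketch, is deliberately chosen because it is shorter in the compact case than the full Cox--Kesten machinery.) Your proposal instead re-runs the Section~\ref{secrenorm} bypass argument with an ad hoc notion of $n$-good box (no edge with $t_G-t_{H_n}>\delta$ inside $B'_N$), which works because with $p_0=q=1$ the coarse-graining conditions are trivial, every bad edge sits in a bad box so the $\Card{\gamma_b}$ term of Lemma~\ref{LEM:modif} is absorbed by bounding $r_4$ by the number of bad components (not by $\Card{\gamma_b}$), and with $N$ held fixed the Liggett--Schonmann--Stacey domination is obtained simply by taking $n$ large (rather than $N$ large), so that the $N$-dependent constants do not interfere with letting $\eps,\delta\to0$. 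This is sound but substantially heavier, and buys you nothing new here: the result falls to a two-line Cram\'er estimate precisely because the support is compact. Two details you gesture at but should make explicit: the uniform-in-$n$ constants in Kesten's estimate are obtained exactly as in the paper by passing through the single law $H^-$ of Lemma~\ref{lemrepart}~(ii) ($T_{H_n}(r)\le Ck$ implies $T_{H^-}(r)\le Ck$); and the replacement of $r_4\le\Card{\gamma_b}$ by $r_4\le\sum_{C\in\Bad:\,C\cap\Gamma\ne\varnothing}\Card{C}$ does need to be justified by noting that each $S_{\varphi_4(j)}$ is a $*$-component of boundaries of bad clusters all of which meet $\Gamma$.
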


To prove Theorem~\ref{thmcont}, we follow the general structure of Cox and Kesten's proof of the continuity of the time constant in first-passage percolation with finite passage times in \cite{MR633228}. We first deduce Theorem \ref{thmcont} from Theorem \ref{propetape2} and Lemmas~\ref{propetape1} and~\ref{propetape3}. Lemmas~\ref{propetape1} and~\ref{propetape3} will be respectively proved in subsections \ref{secetape1} and \ref{secetape3}.


\subsection{Proof of Theorem~\ref{thmcont}}

Let $G, (G_n)_{n\in \NN}$ be probability measures on $[0, +\infty]$. We first prove that for all fixed $x\in \ZZ^d$, we have 
\begin{equation}
\label{eqpreuve}
\lim_{n\rightarrow \infty} \mu_{G_n} (x) = \mu_G (x) \,.
\end{equation}
We define $\underline{\fG}_n = \min \{\fG, \fG_n \}$ (resp. $\overline{\fG}_n = \max \{\fG, \fG_n\}$), and we denote by $\underline{G}_n$ (resp. $\overline{G}_n$) the corresponding probability measure on $[0,+\infty]$. Then $\underline{\fG}_n \leq \fG \leq \overline{\fG}_n$ (resp. $\underline{G}_n \leq G_n \leq \overline{G}_n$), thus by Lemma~\ref{propmon} we have $\mu_{\underline{G}_n} (x) \leq \mu_G (x) \leq \mu_{\overline{G}_n} (x)$. 
To conclude that (\ref{eqpreuve}) holds, it is sufficient to prove that
$$ (i) \quad \miniop{}{\liminf}{n\rightarrow \infty} \mu_{\underline{G}_n} (x) \geq \mu_G (x)\quad \textrm{and} \quad (ii) \quad\miniop{}{\limsup}{n\rightarrow +\infty} \mu_{\overline{G}_n}(x) \leq \mu_G(x) \,.$$
Notice that $\overline{G}_n \overset{d}{\rightarrow} G$ and $\underline{G}_n \overset{d}{\rightarrow} G$. Inequality $(ii)$ is obtained by a straightforward application of Lemma~\ref{propetape1}. For any $K \in \RRp$, we define $G^K = \mathbbm{1}_{[0,K)} G + G([K,+\infty]) \delta_K$ (resp. $\underline{G}_n^K= \mathbbm{1}_{[0,K)} \underline{G}_n + \underline{G}_n([K,+\infty]) \delta_K$), the distribution of $t_G^K(e) = \min (t_{G} (e) , K)$ (resp. $t_{\underline{G}_n}^K(e) = \min (t_{\underline{G}_n} (e) , K)$). Using Lemmas~\ref{propmon} and \ref{propetape3}, since \smash{$\underline{G}_n^K \overset{d}{\rightarrow} G^K$}, we obtain for all $K$
$$\miniop{}{\liminf}{n\rightarrow \infty} \mu_{\underline{G}_n}(x) \,\geq \, \lim_{n\rightarrow \infty} \mu_{\underline{G}_n^K}(x) \,=\, \mu_{G^K} (x) \,,$$
and by Theorem \ref{propetape2} we have $\lim_{K\rightarrow \infty}\mu_{G^K}(x) = \mu_G(x) $. This concludes the proof of $(i)$, and of \eqref{eqpreuve}.

By homogeneity, (\ref{eqpreuve}) also holds for all $x\in \QQ^d$. We know that $|\mu_{G_n} (x) - \mu_{G_n} (y)| \leq \mu_{G_n} (e_1) \| x-y \|_1$, where $e_1 = (1,0,\dots, 0)$. Moreover $\lim_{n\rightarrow \infty} \mu_{G_n} (e_1) = \mu_G (e_1)$, thus for all $n\geq n_0$ large enough we have $|\mu_{G_n} (x) - \mu_{G_n} (y)| \leq 2 \mu_{G} (e_1) \| x-y \|_1$ for all $x,y\in \R^d$. This implies that for any fixed $\eps >0$, there exists $\eta>0$ such that for all $x,y\in \R^d$ such that $\|x-y\|_1\leq \eta$, we have
$$ \sup \{ |\mu_{G} (x) - \mu_{G} (y)| , |\mu_{G_n} (x) - \mu_{G_n} (y)| , n\geq n_0 \} \,\leq \, \eps \,.$$
There exists a finite set $(y_1,\dots , y_m)$ of rational points of $\R^d$ such that 
$$ \SS^{d-1} \,\subset \, \bigcup_{i=1}^m \{ x\in \R^d \,:\, \|y_i - x\|_1 \leq \eta \} \,.$$
Thus
$\displaystyle  \miniop{}{\limsup}{n\rightarrow +\infty} \sup_{x\in \SS^{d-1}} | \mu_{G_n} (x) - \mu_G (x)| \leq  2 \eps + \lim_{n\rightarrow +\infty} \max_{i=1,\dots ,m}  | \mu_{G_n} (y_i) - \mu_G (y_i)| = 2 \eps .$
Since $\eps$ was arbitrary, Theorem \ref{thmcont} is proved.


\subsection{Bound on sequences of probability measures}
\label{secdistfunc}

\begin{lemme}
\label{lemrepart}
Suppose that $G$ and $(G_n)_{n\in \NN}$ are probability measures on $[0,+\infty]$ such that $G_n \overset{d}{\rightarrow} G$.
\begin{itemize}
\item[$(i)$] If $G(\RRp) >p_c(d)$ and $G_n(\RRp) >p_c(d)$ for all $n\in \NN$, then there exists a probability measure $H^+$ on $[0,+\infty]$ such that $G_n\lestoch H^+$ for all $n$ and $H^+(\RRp) > p_c(d)$.
\item[$(ii)$] If $G(\{0\}) < p_c(d)$ and $ G_n (\{0\}) <p_c(d)$ for all $n\in \NN$, then there exists a probability measure $H^-$ on $[0,+\infty]$ such that $G_n\gestoch H^-$ for all $n$ and $H^-(\{0\})< p_c(d)$.
\end{itemize}
\end{lemme}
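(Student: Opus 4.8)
The plan is to realize $H^+$ and $H^-$ as very simple two-atom probability measures, reading off the single threshold parameter that is needed from weak convergence. For part $(i)$, I will look for $H^+$ of the form $H^+ = c\,\delta_T + (1-c)\,\delta_{+\infty}$ with $c\in(p_c(d),1)$ and $T>0$ to be chosen. Its function $\fH^+$ is then $\fH^+(t)=1$ for $t\in[0,T]$ and $\fH^+(t)=1-c$ for $t>T$, which is non-increasing and left-continuous, hence is indeed of the form $\fH$ for a probability measure on $[0,+\infty]$, with $H^+(\RRp)=c>p_c(d)$. Since $G_n\lestoch H^+\iff \fG_n\le \fH^+$ pointwise, and since for $t\le T$ we trivially have $\fG_n(t)\le 1=\fH^+(t)$ while for $t>T$ the monotonicity of $\fG_n$ gives $\fG_n(t)\le \fG_n(T)$, everything reduces to producing a level $T>0$ with $\sup_n\fG_n(T)<1-p_c(d)$ and then picking $c\in(p_c(d),1-\sup_n\fG_n(T))$.

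To find such a $T$: since $G(\RRp)>p_c(d)$ we have $G(\{+\infty\})=\lim_{t\to\infty}\fG(t)<1-p_c(d)$; fix $\varepsilon_0>0$ with $G(\{+\infty\})+\varepsilon_0<1-p_c(d)$ and, using that the monotone function $\fG$ has at most countably many discontinuities, pick a continuity point $T_0>0$ of $t\mapsto G([t,+\infty])$ with $\fG(T_0)<G(\{+\infty\})+\varepsilon_0$. The stated characterization of $G_n\overset{d}{\to}G$ gives $\fG_n(T_0)\to\fG(T_0)$, so there is $N_0$ with $\fG_n(T_0)<1-p_c(d)-\varepsilon_0/2$ for all $n\ge N_0$. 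For each of the finitely many indices $n<N_0$ we have $\lim_{t\to\infty}\fG_n(t)=G_n(\{+\infty\})<1-p_c(d)$, so we may pick $T_n\ge T_0$ with $\fG_n(T_n)<1-p_c(d)$; setting $T=\max(T_0,\max_{n<N_0}T_n)$ and invoking monotonicity of each $\fG_n$ yields $\sup_n\fG_n(T)<1-p_c(d)$, being a maximum of finitely many quantities each strictly below $1-p_c(d)$. This proves $(i)$.

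Part $(ii)$ is entirely symmetric, working near $0$ rather than near $\infty$. I will take $H^-=(1-c)\,\delta_0+c\,\delta_S$ with $c\in(1-p_c(d),1)$ and $S>0$ to be chosen; then $\fH^-$ equals $1$ at $0$, $c$ on $(0,S]$ and $0$ on $(S,+\infty)$, again non-increasing and left-continuous, with $H^-(\{0\})=1-c<p_c(d)$. Since $G_n\gestoch H^-\iff \fG_n\ge\fH^-$ pointwise, it suffices to find $S>0$ with $\inf_n\fG_n(S)\ge c$. The relevant input now is $\lim_{t\to 0^+}\fG_n(t)=G_n((0,+\infty])=1-G_n(\{0\})>1-p_c(d)$ for every $n$, together with $\lim_{t\to 0^+}\fG(t)=1-G(\{0\})>1-p_c(d)$. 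As above, pick a continuity point $S_0>0$ of $\fG$ with $\fG(S_0)>1-p_c(d)+\varepsilon_1$ for a suitable $\varepsilon_1>0$, use $\fG_n(S_0)\to\fG(S_0)$ to handle $n\ge N_1$, handle the finitely many $n<N_1$ by choosing $S_n\le S_0$ with $\fG_n(S_n)>1-p_c(d)$, and set $S=\min(S_0,\min_{n<N_1}S_n)>0$; monotonicity then gives $\mu:=\inf_n\fG_n(S)>1-p_c(d)$, and any $c\in(1-p_c(d),\mu]$ finishes the proof.

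The only genuine subtlety — the main obstacle — is that one cannot directly interchange $\sup_n$ (resp.\ $\inf_n$) with the limit in $t$: weak convergence controls $\fG_n$ only at a fixed continuity point of $\fG$ and only for large $n$. The fix is exactly the split used above: a cofinite block of indices handled by convergence at one well-chosen continuity point, and a finite block handled crudely using the monotonicity of each $\fG_n$ together with $\lim_{t\to\infty}\fG_n(t)=G_n(\{+\infty\})$ (resp.\ $\lim_{t\to 0^+}\fG_n(t)=1-G_n(\{0\})$). Everything else is routine bookkeeping: verifying that the two-atom step functions are legitimate $\fH$-functions (non-increasing, left-continuous, value $1$ at $0$), and checking that the chosen threshold is strictly positive so that $H^+(\RRp)=c$ can indeed be taken above $p_c(d)$ (resp.\ $H^-(\{0\})=1-c$ below $p_c(d)$).
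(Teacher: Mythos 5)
Your proof is correct, and the heart of it coincides with the paper's: both hinge on producing a single finite threshold $T$ (resp.\ positive threshold $S$) with $\sup_n \fG_n(T) < 1-p_c(d)$ (resp.\ $\inf_n \fG_n(S) > 1-p_c(d)$), obtained by the same split --- weak convergence at one well-chosen continuity point of $\fG$ handles all large $n$, and the finitely many remaining indices are handled one at a time using monotonicity of each $\fG_n$ together with $\lim_{t\to\infty}\fG_n(t) = G_n(\{+\infty\})$ (resp.\ $\lim_{t\to 0^+}\fG_n(t) = 1 - G_n(\{0\})$). Where you genuinely diverge is in how $H^\pm$ is built from that threshold. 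The paper sets $\fH^+$ equal to (the left-continuous regularization of) $\sup_n \fG_n$ and $\fH^- = \inf_n \fG_n$, which yields the pointwise tightest dominating and dominated laws, but at the cost of having to verify these are legitimate tail functions --- in particular, the paper spends a paragraph proving left-continuity of $\inf_n \fG_n$ by hand. Your two-atom measures $c\,\delta_T + (1-c)\,\delta_{+\infty}$ and $(1-c)\,\delta_0 + c\,\delta_S$ have step-function tails that are trivially non-increasing, left-continuous, and equal to $1$ at $0$, so the verification that $H^\pm$ is a well-defined probability measure on $[0,+\infty]$ becomes immediate, and the domination $\fG_n \le \fH^+$ (resp.\ $\fG_n \ge \fH^-$) reduces to the single scalar inequality at the threshold plus monotonicity. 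Your route is a bit more elementary and self-contained; the paper's gives a canonical (extremal) choice of $H^\pm$, though that extra sharpness is not used anywhere in the argument. Both are fully rigorous.
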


\begin{proof} $(i)$
We define $\hat \fH^+ = \sup_{n\in \NN} \fG_n$, and $\fH^+(x) = \inf\{ \hat \fH^+(y) \,:\, y<x  \}$ for all $x\in \RRp$. Then $\hat \fH^+$ and $\fH^+$ are non-increasing functions defined on $\RRp$ and they take values in $[0,1]$. By construction $\fH^+$ is left continuous and $\fH^+\geq \fG_n$, for all $n\in \NN$. Moreover we have $\hat \fH^+ (x) = \fH^+ (x) = 1 $ for all $x\leq 0$. Thus there exists a probability measure $H^+$ on $[0,+\infty]$ such that $\fH^+(t) = H^+([t, +\infty])$ for all $t\in \RRp$. It remains to prove that $H^+(\RRp) > p_c(d)$. Since $G(\RRp) >p_c(d)$, i.e. $\lim_{+\infty} \fG < 1-p_c(d)$, there exist $A\in \RRp$ and $\eps>0$ such that $\fG$ is continuous at $A$ and $\fG(A) \leq 1- p_c(d) - 2\eps$. Moreover \smash{$G_n \overset{d}{\rightarrow} G$} and $\fG$ is continuous at $A$, thus there exists $n_0$ such that for all $n\geq n_0$ we have $\fG_n(A) \leq \fG(A) + \eps \leq 1 - p_c(d) - \eps$. For any $i\in \{1,\dots , n_0-1\}$, $G_i(\RRp) >p_c(d)$ thus there exists $A_i < +\infty$ such that $\fG_i (A_i) < 1- p_c(d)$. Fix $A' = \max (A, A_0, \dots,A_{n_0-1}) <+\infty $. We conclude that
\begin{align*}
 \hat \fH^+(A') & \,=\, \max \left( \fG_0 (A') , \dots , \fG_{n_0-1} (A'), \sup_{n\geq n_0} \fG_n (A') \right)\\
 & \,\leq\,  \max \left( \fG_0 (A_0) , \dots , \fG_{n_0-1} (A_{n_0-1}), \sup_{n\geq n_0} \fG_n (A) \right)  \,<\, 1- p_c(d) \,,
\end{align*} 
thus $H^+(\RRp) =  1- \lim_{+\infty} \fH^+ > p_c(d)$.

\noindent
$(ii)$ We define $\fH^- = \inf_{n\in \NN} \fG_n$. Then $\fH^-$ is non-increasing, defined on $\RRp$ and it takes values in $[0,1]$. Fix $t_0 \in \RRp$. Let us prove that $\fH^-$ is left continuous at $t_0$. By definition of $\fH^-$, for any $\eps>0$, there exists $n_0$ such that $\fH^-(t_0) \geq \fG_{n_0} (t_0) - \eps$. Since $\fG_{n_0}$ is left continuous, there exists $\eta>0$ such that for all $t\in (t_0 -\eta, t_0 ]$ we have $\fG_{n_0} (t) \leq \fG_{n_0} (t_0) + \eps$. Thus for all $t\in (t_0-\eta, t_0 ]$, we obtain
$$ \fH^-(t) \,\leq \, \fG_{n_0} (t) \,\leq\, \fG_{n_0} (t_0) + \eps \,\leq\, \fH^-(t_0) + 2 \eps \,, $$
thus $\fH^-$ is right continuous. By construction $\fH^- \leq \fG_n$, for all $n\in \NN$. Moreover $\fH^-(t) =1$ for all $t\leq 0$. Thus there exists a probability measure $H^-$ on $[0,+\infty]$ such that $\fH^-(t) = H^-([t, +\infty])$ for all $t\in \RRp$. It remains to prove that $H^-(\{0\})<p_c(d)$. Since $G(\{0\})<p_c(d)$, there exists $\eta >0$ such that $G([0,\eta))<p_c(d)$, i.e., $\fG(\eta)>1-p_c(d)$. Let $\eps >0$ such that $\fG(\eta)\geq 1-p_c(d) + 2 \eps$. There exists $\delta \in [0,\eta)$ such that $\fG$ is continuous at $\delta$. Then $\lim_{n\rightarrow \infty }\fG_n (\delta) = \fG (\delta)$, thus there exists $n_0$ such that for all $n\geq n_0$, $\fG_n(\delta) \geq \fG (\delta) - \eps \geq 1- p_c(d) + \eps$. For any $i\in \{1,\dots , n_0-1\}$, there exists $\delta_i>0$ such that $\fG_i (\delta_i) > 1- p_c(d)$. Fix $\delta' = \min (\delta, \delta_0, \dots, \delta_{n_0-1}) >0$. We conclude that
\begin{align*}
\fH^- (\delta') & \,=\, \min \left(\fG_0 (\delta'), \dots , \fG_{n_0 -1} (\delta') , \inf_{n\geq n_0} \fG_n(\delta') \right)\\
 & \,\geq \, \min \left(\fG_0 (\delta_0), \dots , \fG_{n_0 -1} (\delta_{n_0-1}) , \inf_{n\geq n_0} \fG_n(\delta) \right)\,>\, 1- p_c(d)\,, 
 \end{align*}
and 
$H^-(\{0\}) \,=\, 1- \lim_{t\rightarrow 0, t>0} \fH^-(t) \, \leq \,1- \fH (\delta') < p_c(d) \,.$
\end{proof}


\subsection{Proof of Lemma~\ref{propetape1}}
\label{secetape1}

We follow the structure of Cox and Kesten's proof of Lemma 1 in \cite{MR633228}.

We take $H^+$ as given in Lemma \ref{lemrepart} (i), and we fix $M\in \RRp$ such that $H^+([0,M]) > p_c(d)$. We work with the stabilized points $\wx^{\C_{H^+,M}}$. We consider a point $x \in \ZZ^d$, and $k\in \NN^*$. For any path $r$ from $\w0^{\C_{H^+,M}}$ to \smash{$\wkx^{\C_{H^+,M}}$}, using Lemma~\ref{propCV} we have a.s.
$$ T_G (r)\,=\, \sum_{e\in r} t_G(e) \,=\, \lim_{n\rightarrow +\infty}  \sum_{e\in r} t_{G_n}(e) \,\geq \, \miniop{}{\limsup}{n\rightarrow +\infty} \wT_{G_n}^{\C_{H^+,M}} (0, kx) \,.$$
Taking the infimum over any such path $r$, we obtain 
$$\wT_{G}^{\C_{H^+,M}} (0, kx) \,\geq \,  \miniop{}{\limsup}{n\rightarrow +\infty} \wT_{G_n}^{\C_{H^+,M}} (0, kx) \,. $$
Conversely, since $G \lestoch G_n$, thanks to the coupling of the laws we get $\wT_{G}^{\C_{H^+,M}} (0, kx) \leq \wT_{G_n}^{\C_{H^+,M}} (0, kx) $ for all $n$, thus
$$ \forall k \in \NN^*\,,\textrm{ a.s.,} \quad  \lim_{n\rightarrow \infty} \wT_{G_n}^{\C_{H^+,M}} (0, kx) \,=\,  \wT_{G}^{\C_{H^+,M}} (0, kx) \,.$$
Since for all $n$ we have $\wT_{G_n}^{\C_{H^+,M}} (0, kx) \leq \wT_{H^+}^{\C_{H^+,M}} (0, kx)$ that is integrable by Proposition~\ref{propmoments}, the dominated convergence theorem implies that, for all $k\in \NN^*$,
\begin{equation}
\label{eqetape11}
 \lim_{n\rightarrow \infty} \EE\left[ \wT_{G_n}^{\C_{H^+,M}} (0, kx) \right] \,=\, \EE \left[  \wT_{G}^{\C_{H^+,M}} (0, kx)  \right]\,.
 \end{equation}
By Lemma~\ref{propstab}, we know that $\mu_G(x)= \inf_{k\in \NN^*}  \EE \left[ \wT_{G}^{\C_{H^+,M}} (0, kx) \right]/k$. For any $\eps>0$, there exists $K (G, \eps)$ such that
\begin{equation}
\label{eqetape12}
\mu_G(x) \,\geq \, \frac{ \EE \left[ \wT_{G}^{\C_{H^+,M}} (0, Kx) \right]}{K} -\eps \,,
\end{equation}
and using (\ref{eqetape11}) we know that there exists $n_0(\eps, K)$ such that for all $n\geq n_0$ we have
\begin{equation}
\label{eqetape13}
 \frac{ \EE\left[ \wT_{G}^{\C_{H^+,M}} (0, Kx) \right]}{K}  \,\geq \,  \frac{ \EE\left[ \wT_{G_n}^{\C_{H^+,M}} (0, Kx) \right]}{K}  - \eps \,.
\end{equation}
Since $\mu_{G_n} (x) =\inf_{k\in \NN^*}  \EE \left[ \wT_{G_n}^{\C_{H^+,M}} (0, kx) \right]/k$, combining equations (\ref{eqetape12}) and (\ref{eqetape13}), we obtain that for any $\eps >0$, for all $n$ large enough,
$$ \mu_G(x) \,\geq\, \mu_{G_n} (x) - 2\eps \,.$$
This concludes the proof of Lemma~\ref{propetape1}.

\begin{rem}
The domination we use to prove (\ref{eqetape11}) is free, since whatever the probability measure $H^+$ on $[0,+\infty]$ we consider, the regularized times $\wT_{H^+}^{\C_{H^+,M}}(0,x)$ are always integrable. In \cite{MR588407}, Cox considered the (non regularized) times $T_{G_n} (0,x)$ for probability measures $G_n$ on $\RRp$. By Lemma \ref{lemrepart} it is easy to obtain $T_{G_n} (0,x) \leq T_H (0,x)$ for some probability measure $H$ on $\RRp$. However, without further assumption, $T_H(0,x)$ may not be integrable. This is the reason why Cox supposed that the family $(G_n, n\in \NN)$ was uniformly integrable. In \cite{MR624685}, Cox and Kesten circumvent this problem by considering some regularized passage times that are always integrable. There is no straigthtforward generalization of their regularized passage times to the case of possibly infinite passage times, but the  $\wT$ introduced in \cite{CerfTheret14} plays  the same role.
\end{rem}


\subsection{Proof of Lemma~\ref{propetape3}}
\label{secetape3}

Of course, Lemma~\ref{propetape3} can be seen as a particular case of the continuity result by Cox and Kesten. But, as noted by Kesten in his Saint-Flour course~\cite{Kesten:StFlour}, the Cox--Kesten way makes use of former results by Cox in \cite{MR624685} and is not the shortest path to a proof in the compact case.
In~\cite{Kesten:StFlour} Kesten also gave a sketch of a shorter proof in the compact case.
We thought the reader would be pleased to have a self-contained proof, so we present a short but full proof of Lemma~\ref{propetape3}, quite inspired by Kesten~\cite{Kesten:StFlour}.

Let $G,(G_n)_{n\in \NN}$ be probability measures on $[0,R]$, and consider $x\in \ZZ^d$. As in the proof of Theorem \ref{thmcont}, we have $G_n \lestoch \overline{G}_n$, where $\overline{\fG}_n= \max (\fG, \fG_n)$, thus $\mu_{G_n} \leq \mu_{\overline{G}_n}$. Applying Lemma~\ref{propetape1}, we know that 
$$\miniop{}{\limsup}{n\rightarrow +\infty} \mu_{\overline{G}_n} (x) \,\leq\, \mu_G(x) \,.$$
If $\mu_G(x) = 0$, then $\lim_{n\rightarrow \infty} \mu_{G_n} (x) =\lim_{n\rightarrow \infty} \mu_{\overline{G}_n} (x) = \mu_G (x) =0$ and the proof is complete. We suppose from now on that $\mu_G(x) >0$, thus $x\neq 0$. Since the passage times $t_G(e)$ are finite, it is well known that $\mu_G(x) >0$ for $x\neq 0$ if and only if $G(\{0\})<p_c(d)$ (see Theorem 6.1 in \cite{Kesten:StFlour}, or Proposition \ref{thmpropmu} in a more general setting). We want to prove that $\liminf_{n\rightarrow \infty} \mu_{\underline{G}_n} (x) \geq \mu_G(x)$, where $\underline{\fG}_n= \min (\fG, \fG_n)$. Notice that $\wx^{\C_{\underline{G}_n,M}}= \wx^{\C_{G,M}} = x$ for any $M\geq R$, thus we do not need to introduce regularized times $\wT$. In what follows we note {\em s.a.} for {\em self avoiding}. Since $\underline{G}_n \overset{d}{\rightarrow} G$, we have $\lim_{n\rightarrow \infty}\underline{G}_n(\{0\}) \leq G(\{0\}) < p_c(d) $, thus we consider only $n$ large enough so that $\underline{G}_n(\{0\})<p_c(d)$. Applying Lemma \ref{lemrepart} (ii) to the sequence of functions $\underline{G}_n$, we obtain the existence of a probability measure $H^-$ on $[0,+\infty]$ (in fact on $[0,R]$) such that $H^-\lestoch \underline{G}_n$ for all $n$ and $H^-(\{0\})<p_c(d)$. Thanks to the coupling, we know that $t_{H^-}(e) \leq t_{\underline{G}_n} (e) \leq t_G(e)$ for every edge $e$, thus we obtain that for all $A\in \NN^*$, for all $C\in \RRp$,
\begin{align*}
&\PP [ T_{\underline{G}_n} (0,kx) \leq  T_G(0,kx) - \eps k ]  \\ 
& \,\leq \,\PP \left[ \exists r \textrm{ s.a. path starting at } 0 \textrm{ s.t. } |r| \geq A k \textrm{ and } T_{\underline{G}_n}(r) \leq AC k \right] \\
& \quad +  \PP [T_{\underline{G}_n} (0,kx) > A C k] + \sum_{\tiny{\begin{array}{c} r \textrm{ s.a. path from }0 \\ \textrm{ s.t. }|r| \leq A k   \end{array}}} \PP \left[ \sum_{e\in r} t_G(e) - t_{\underline{G}_n} (e) \geq \eps k \right] \\
& \,\leq \,\PP \left[ \exists r \textrm{ s.a. path starting at } 0 \textrm{ s.t. } |r| \geq A k \textrm{ and } T_{H^-}(r) \leq A C k \right] \\
& \quad +  \PP [T_{G} (0,kx) > A C k] + (2d)^{A k } \,\PP \left[ \sum_{i=1}^{Ak } t_G(e_i) - t_{\underline{G}_n} (e_i) \geq \eps k \right] \,,
\end{align*}
where $(e_i, i=1,\dots , Ak)$ is a collection of distinct edges. Since $H^-(\{0\})<p_c(d)$, we know that we can choose $C \in (0,+\infty)$ (depending on $d$ and $H$) such that there exist finite and positive constants $D,E$ (depending also on $d$ and $H$) satisfying, for all $k\in \NN^*$,
$$ \PP \left[ \exists r \textrm{ s.a. path starting at } 0 \textrm{ s.t. } |r| \geq k \textrm{ and } T_{H^-}(r) \leq C k \right] \,\leq \, D e^{-Ek} $$
(see Proposition 5.8 in \cite{Kesten:StFlour}). Since the support of $G$ is included in $[0,R]$ for some finite $R$, we know that $T_G (0, kx) \leq R k \|x\|_1$, thus we choose $A$ large enough (depending on $F$, $d$ and $C$) so that 
$$\PP [T_{G} (0,kx) > A C k] \,=\,0\,.$$
If we prove that there exists $n_0 (G, (\underline{G}_n), \eps)$ such that for all $n\geq n_0$,
\begin{equation}
\label{eqetape3}
\sum_{k>0} (2d)^{A k }\,  \PP \left[ \sum_{i=1}^{ Ak } t_G(e_i) - t_{\underline{G}_n} (e_i) \geq \eps k \right] \,<\, +\infty \,,
\end{equation}
then for all $n\geq n_0$ we have $\sum_k \PP [ T_{\underline{G}_n} (0,kx) \leq  T_G(0,kx) - \eps k ]<+\infty$. By Borel-Cantelli's lemma we obtain that for all $n\geq n_0$, a.s., for all $k\geq k_0(n)$ large enough, 
$$T_{\underline{G}_n} (0,kx) > T_G(0,kx) - \eps k \,,$$
thus for all $n\geq n_0$ we get
$$ \mu_{\underline{G}_n} (x) \geq \mu_G (x) - \eps\,. $$
We conclude that $\liminf_{n\rightarrow \infty} \mu_{\underline{G}_n} (x) \geq \mu_G(x)$. It remains to prove (\ref{eqetape3}). For any $\alpha >0$, by Markov's inequality we have
\begin{align*}
(2d)^{A k } \,\PP & \left[ \sum_{i=1}^{ Ak } t_G(e_i) - t_{\underline{G}_n} (e_i) \geq \eps k \right] \\
&\qquad  \qquad \,\leq\, \left( 2d \exp \left( \frac{-\alpha \eps }{A}\right) \EE \left[ \exp \left(\alpha (t_G(e) - t_{\underline{G}_n} (e)) \right) \right]  \right)^{Ak}  \,. 
\end{align*}
By Lemma~\ref{propCV} we have $\lim_{n\rightarrow \infty} t_{\underline{G}_n} (e) = t_G(e)$ a.s. Since $t_{\underline{G}_n} (e), t_G (e) \leq R$ we obtain by a dominated convergence theorem that
$$ \lim_{n\rightarrow \infty} \EE \left[ \exp \left(\alpha (t_G(e) - t_{\underline{G}_n} (e)) \right)\right]\,=\, 1 \,. $$
We choose $\alpha (\eps)$ large enough so that 
$$2d \,\leq \, \exp \left( \frac{\alpha \eps}{4A} \right)\,,$$
and then $n_0(G, (\underline{G}_n), \eps)$ large enough so that for all $n\geq n_0$, we have
$$  \EE \left[ \exp \left(\alpha (t_G(e) - t_{\underline{G}_n} (e)) \right)\right] \,\leq\, \exp \left( \frac{\alpha \eps}{4A} \right)\,.$$
Thus for all $n\geq n_0$, we have 
$$(2d)^{A k } \PP \left[ \sum_{i=1}^{ Ak } t_G(e_i) - t_{\underline{G}_n} (e_i) \geq \eps k \right]  \,\leq\, \exp \left( - \frac{\alpha \eps}{2A} \right)\,, $$
so (\ref{eqetape3}) is proved.


\section{Continuity of the Cheeger constant, proof of Theorem  \ref{thm:isocnt}}
\label{seccheeger}

The definition of the objects used in this section are given in section \ref{secdefcheeger}. The main step in the proof of Theorem \ref{thm:isocnt} is the following lemma:
\begin{lemme}\label{lem:normcont}
For every $p > p_c(2)$,
$\displaystyle \lim_{p' \rightarrow p} \sup_{x\in\mathbb{S}^1}|\beta_{p'}(x)-\beta_p(x)|  \,=\, 0 \,. $
\end{lemme}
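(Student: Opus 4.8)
The plan is to reduce the uniform convergence on $\SS^1$ to pointwise convergence, and then to obtain the latter by a Cox--Kesten type argument parallel to the one used for the time constant in Sections~\ref{secetape2}--\ref{seccontfpp}. First I would fix once and for all $p_0\in(p_c(2),p)$. For every $p'\in[p_0,1]$ the map $\beta_{p'}$ is a norm, and by Lemma~\ref{propstab2} (with base parameter $p_0$) one has $\beta_{p'}(e_i)=\lim_{n}\EE[b_{p'}(\widetilde 0^{\C_{p_0}},\widetilde{ne_i}^{\C_{p_0}})]/n$. A simple $p_0$-open path is right-most and $p'$-open, and satisfies $|\partial^+\gamma|<3|\gamma|$ by \cite[Lemma~2.5]{biskup2012isoperimetry}; hence $b_{p'}(\widetilde 0^{\C_{p_0}},\widetilde{ne_i}^{\C_{p_0}})\le 3\,D^{\C'_{p_0}}(\widetilde 0^{\C_{p_0}},\widetilde{ne_i}^{\C_{p_0}})$, and the Antal--Pisztora estimate \eqref{EQ:AP} gives $\beta_{p'}(e_i)\le C(p_0)$, so that $\beta_{p'}(x)\le C(p_0)\|x\|_1$ for every $x\in\R^2$. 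Thus $(\beta_{p'})_{p'\in[p_0,1]}$ is a uniformly Lipschitz family of norms, and it suffices to prove that $\beta_{p'}(x)\to\beta_p(x)$ for each fixed $x\in\ZZ^2$: the case $x\in\QQ^2$ follows by homogeneity, and the case $x\in\SS^1$ by equicontinuity and compactness of $\SS^1$.

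\emph{Soft bounds.} Fix $x\in\ZZ^2\setminus\{0\}$ and work with the stabilized endpoints $\widetilde 0^{\C_{p_0}},\widetilde{kx}^{\C_{p_0}}$, so that $\beta_{p'}(x)=\lim_{k}\EE[b_{p'}(\widetilde 0^{\C_{p_0}},\widetilde{kx}^{\C_{p_0}})]/k$ for every $p'\ge p_0$ by Lemma~\ref{propstab2}. Let $p_0\le a\le b\le 1$. An $a$-optimal right-most path $r$ from $\widetilde 0^{\C_{p_0}}$ to $\widetilde{kx}^{\C_{p_0}}$ (chosen by a deterministic rule, hence a function of the $a$-percolation only) is $b$-open, so $b_b(\widetilde 0^{\C_{p_0}},\widetilde{kx}^{\C_{p_0}})\le\mathbf b_b(r)=b_a(\widetilde 0^{\C_{p_0}},\widetilde{kx}^{\C_{p_0}})+\#\{e\in\partial^+r:e\text{ is }b\text{-open but not }a\text{-open}\}$. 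Conditionally on the $a$-percolation, the last count has mean at most $|\partial^+r|\,(b-a)/(1-a)\le 3|r|(b-a)/(1-p_0)$, while $\EE|r|=O(k)$ by Lemma~\ref{lem:rightint} combined with \eqref{EQ:AP}. Hence $\beta_b(x)\le\beta_a(x)+C(b-a)$. Applying this with $a=p,\ b=p'\downarrow p$ yields $\limsup_{p'\downarrow p}\beta_{p'}(x)\le\beta_p(x)$, and with $b=p,\ a=p'\uparrow p$ it yields $\liminf_{p'\uparrow p}\beta_{p'}(x)\ge\beta_p(x)$.

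\emph{Hard bounds (the renormalisation step).} It remains to bound $\liminf_{p'\downarrow p}\beta_{p'}(x)$ from below and $\limsup_{p'\uparrow p}\beta_{p'}(x)$ from above; these are the analogues of Lemma~\ref{LEMetape2prime}, and I would treat the first, the second being symmetric. Given $\eps>0$, choose $\alpha,\mathfrak p(\eps)$ as in Lemma~\ref{lem:new}, fix $p_0\in(p_c(2),p)$ and $q_1\in(p,1]$ so that Lemma~\ref{couplage}(ii) applies, and set $N=N'(p_0,q_1,\mathfrak p(\eps))$. For $p'\in(p,q_1]$ let $r$ be a $p'$-optimal right-most path from $\widetilde 0^{\C_{p_0}}$ to $\widetilde{kx}^{\C_{p_0}}$, and let $F_k$ be the event that the $N$-boxes of $0$ and of $kx$ together with their $*$-neighbours are $(p_0,p')$-good and belong to the infinite cluster of good boxes. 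On $F_k$, Lemma~\ref{LEM:modif} applied to $r$ with intermediate parameter $p$ produces a $p$-open path $r'$ that is a concatenation of $O(\Card{r_b})$ subpaths of $r$ with pairwise disjoint interiors, and of $O(\Card{r_b})$ self-avoiding $p_0$-open detours of total length $\Card{r'\setminus r}$, where $r_b$ denotes the set of $p$-closed edges of $r$. Since the right boundary of a subpath of the right-most path $r$ is a subset of $\partial^+r$ and these subsets are pairwise disjoint, and $\mathbf b_p\le\mathbf b_{p'}$ on any fixed set of edges (because $p\le p'$), the almost-subadditivity of $b_p$ (eq.~(2.27) in \cite{biskup2012isoperimetry}) along the junction points, which lie in $\C_{p_0}\subset\C_p$, gives $b_p(\widetilde 0^{\C_{p_0}},\widetilde{kx}^{\C_{p_0}})\le b_{p'}(\widetilde 0^{\C_{p_0}},\widetilde{kx}^{\C_{p_0}})+\Card{r'\setminus r}+C\Card{r_b}$. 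Now $\Card{r_b}$ is conditionally (given the $p'$-percolation) dominated by a $\mathrm{Bin}(|r|,(p'-p)/p')$ variable with $\EE|r|=O(k)$, and Lemma~\ref{LEM:modif} bounds $\Card{r'\setminus r}$ by $\rho_d\big(N\sum_{C\in\Bad,\,C\cap\Gamma\ne\varnothing}\Card C+N^d\Card{r_b}\big)$, where $\Gamma$ is the animal of $N$-boxes met by $r$, with $\Card\Gamma\le\tilde C_d(1+|r|/N)$ by Lemma~\ref{lemcard3} and $\sum_C\Card C\le\eps\Card\Gamma$ off an event of probability $\le e^{-ck}$ by Lemma~\ref{lem:new}. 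Proceeding exactly as in the proof of Lemma~\ref{LEMetape2prime}---intersecting with $F_k$, summing over the finitely many possible positions of $\widetilde 0^{\C_{p_0}}$ and $\widetilde{kx}^{\C_{p_0}}$, and using that $\PP(F_k)$ is bounded below uniformly in $k$ and in $p'\in[p_0,q_1]$ by the FKG inequality and the domination of Lemma~\ref{couplage}(ii), which provides the \emph{same} $\mathfrak p(\eps)$ for all such $p'$---one gets, for $p'$ close enough to $p$ and $k$ large, $\PP\big(b_p(\widetilde 0^{\C_{p_0}},\widetilde{kx}^{\C_{p_0}})\le b_{p'}(\widetilde 0^{\C_{p_0}},\widetilde{kx}^{\C_{p_0}})+C'\eps k\big)\ge c_\eps>0$ with $C'$ not depending on $\eps$ (the factor $N$ in $N\sum_C\Card C$ cancels against the $1/N$ in the bound on $\Card\Gamma$). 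Dividing by $k$, letting $k\to\infty$ and invoking the almost sure convergence of Lemma~\ref{propstab2} forces the deterministic inequality $\beta_p(x)\le\beta_{p'}(x)+C'\eps$; letting $p'\downarrow p$ and then $\eps\to0$ gives $\liminf_{p'\downarrow p}\beta_{p'}(x)\ge\beta_p(x)$. Bypassing instead a $p$-optimal right-most path down to a level $p'<p$, with $p_0<p'$ and using Lemma~\ref{couplage}(i) with $q=p$, gives $\limsup_{p'\uparrow p}\beta_{p'}(x)\le\beta_p(x)$, which completes the proof.

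\emph{Main obstacle.} I expect the hard bounds to be the delicate point. One must not only run the renormalised bypass construction of Lemma~\ref{LEM:modif}---which only controls the \emph{number} of new edges---but also keep track of the right boundary of the modified path without losing a multiplicative constant; this is exactly where the combinatorial fact that the right boundaries of disjoint subpaths of a right-most path are disjoint subsets of the right boundary of the whole path is essential. A secondary difficulty, already present in Lemma~\ref{LEMetape2prime}, is that the estimate can only be proved with positive (not full) probability, since the good-block event $F_k$ has probability bounded away from $1$ but not tending to $1$; this is circumvented by combining the positive-probability bound with the almost sure convergence $b_{p'}(\widetilde 0^{\C_{p_0}},\widetilde{kx}^{\C_{p_0}})/k\to\beta_{p'}(x)$ of Lemma~\ref{propstab2}.
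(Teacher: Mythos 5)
Your proposal follows the same two-step strategy as the paper's proof: a ``soft'' one-sided comparison coming from the fact that a $p$-open right-most path stays $p'$-open when $p'\ge p$ but may acquire extra open right-boundary edges, and a ``hard'' one-sided comparison coming from the renormalised bypass construction of Section~\ref{secrenorm}. Where you diverge is mostly in the bookkeeping. For the soft bound, you compute an expectation and invoke the $L^1$-convergence of Lemma~\ref{propstab2} to get the clean Lipschitz estimate $\beta_b(x)\le\beta_a(x)+C(b-a)$, whereas the paper proceeds through a Cram\'er bound on $\mathbf b_q(\gamma)-\mathbf b_p(\gamma)$ for a fixed $\gamma$ of length $\le\alpha'n$, a union over such $\gamma$, and a positive-probability conclusion; your route is slightly more elegant but does require you to justify the finiteness of $\EE|r|$ for a $b_a$-optimal right-most path (exponential tails from Lemma~\ref{lem:rightint} plus the chemical-distance estimate \eqref{EQ:AP}, and some care about existence/choice of $r$), which the paper sidesteps by working on the event $\{|\gamma|\le\alpha'n\}$. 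For the hard bound, you track the right boundary of the modified path $r'$ via disjointness of right boundaries of disjoint subpaths of $r$ together with the almost-subadditivity of $b_p$ at junction points, while the paper invokes \cite[Lemma~2.6]{biskup2012isoperimetry} to conclude that $\gamma'$ is itself right-most and then uses $\mathbf b_p(\gamma')\le\mathbf b_q(\gamma')\le\mathbf b_q(\gamma)+3|\gamma'\setminus\gamma|$; both decompositions produce the same $O(\eps k)$ error (your constant in front of $|r'\setminus r|$ should really be $3$, but this is absorbed). Finally, you reduce uniformity over $\SS^1$ to pointwise convergence plus equicontinuity, as the paper does for $\mu_G$ in Section~\ref{seccontfpp}, whereas in Section~\ref{seccheeger} the paper simply states the probability bounds uniformly in $x\in\SS^1$ from the outset. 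These are presentational variations on the same argument; I see no genuine gap.
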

\begin{proof}
Let $x\in \SS^1$. Let $p_c(2)<p_0\leq p \leq q $, and define $\delta = q-p$. We couple the percolations with different parameters in the usual way using uniform variables. We extend the definition of $\tilde{y}^{\C}$ to any $y\in \RR^d$. For a path $r\in \mathcal{R} (x,y)$, let us define $\mathbf{b}_p (r) = \Card{\{ e \in \partial^+ r \,:\, e \text{ is } p-open\}}$. For $x,y\in \C_p$, we define $b_p (x,y) = \inf \{ \mathbf{b}_p (r) \,:\, r \in \mathcal{R} (x,y), r\textrm{ is }p-\text{open} \}$.

{\em Step (i).}
By Lemma \ref{lem:rightint} there exist $C,C',\alpha>0$ (depending on $p_0$) such that $\forall p\geq p_0$, $\forall n$,
\begin{equation}\label{eq:shortpaths}
\pr \left[\exists \gamma\in \bigcup_{x\in\mathbb{Z}^2} \mathcal{R}(0,x)\,:\,|\gamma|>n \,,\,\, \mathbf{b}_p (\gamma) \leq \alpha n \right]\le  Ce^{-C'n}
.\end{equation}
Let $F_{p_0}$ be the event $\{ 0 \in \C_{p_0}\} \cap \{nx \in \C_{p_0} \}$. On the event $F_{p_0}$, by \cite[Lemma 2.5]{biskup2012isoperimetry}, we have $b_p (0, nx) \leq 3 D^{\C_p} (0, nx) \leq 3 D^{\C_{p_0}} (0,nx)$, thus using \eqref{EQ:AP3} we know that there exist positive constants $\hat A,\hat B, \beta$ (depending only on $p_0$) such that for all $p>p_0$, for all $x \in \SS^1$,
\begin{equation}
\label{APencore}
\pr \left[ F_{p_0} \cap \{ b_p (0,nx) \geq 3 \beta n  \}  \right] \le \hat A \exp(- \hat B n)\,.
\end{equation}
For any $p$-open path $\gamma$, $\gamma$ is also $q$-open. However some additional right-boundary edges may be open. To bound the difference between $\mathbf{b}_{q}(\gamma)$ and $\mathbf{b}_{p}(\gamma)$, note that if $|\gamma|<\alpha' n$ by \cite[Lemma 2.5]{biskup2012isoperimetry} $|\partial^+\gamma|<3\alpha' n$. We can bound $\mathbf{b}_{q}(\gamma)-\mathbf{b}_{p}(\gamma)$ by Cram\'er's theorem. For every fixed path $\gamma$ such that $|\gamma|<\alpha' n$, for every $\epsilon>0$ and $\delta < \epsilon$,
\begin{equation}
\label{hop}
\pr \left[\mathbf{b}_{q}(\gamma)-\mathbf{b}_{p}(\gamma)>3\epsilon\alpha' n\right]\le e^{-3\alpha' n\left(\epsilon\log{\frac{\epsilon}{\delta}}+(1-\epsilon)\log{\frac{1-\epsilon}{1-\delta}}\right)}.
\end{equation}
Fix $\alpha' = 3 \beta / \alpha$. Since there are at most $4^{\alpha' n}$ paths of length smaller than $\alpha' n$ containing $0$, we obtain that for all $p_0\leq p<q$, for all $x\in \SS^1$,
\begin{align}
& \pr \left[ b_q (\tilde{0}^{\mathcal{C}_q},\widetilde{nx}^{\mathcal{C}_q}) > b_p (\tilde{0}^{\mathcal{C}_p},\widetilde{nx}^{\mathcal{C}_p}) + 3 \epsilon \alpha' n \right]\nonumber \\
& \quad \,\leq\,  \pr [ F_{p_0}^c]  + Q_p [F_{p_0} \cap \{b_p (0,nx) > 3\beta n\}  ]\nonumber \\
& \quad \quad +  \pr \left[  F_{p_0} \cap \{ \exists \gamma \in \mathcal R(0,nx) \,:\, |\gamma| > \alpha' n \,,\,\, \mathbf{b}_{p} (\gamma) \leq 3\beta  n \} \right] \nonumber \\
& \quad \quad +  \pr \left[  F_{p_0} \cap \{ \exists \gamma \in \mathcal R(0,nx) \,:\, |\gamma| \leq \alpha' n \,,\,\, \mathbf{b}_{q} (\gamma) > \mathbf{b}_p (\gamma) + 3 \epsilon \alpha' n \} \right] \nonumber\\
& \quad \,\leq \, (1 - \theta_{p_0}^2) + \hat A e^{- \hat B n} + C e^{-C' \alpha' n} \nonumber\\
& \quad \quad + 4^{\alpha' n} e^{-3\alpha' n\left(\epsilon\log{\frac{\epsilon}{\delta}}+(1-\epsilon)\log{\frac{1-\epsilon}{1-\delta}}\right)}\,. \nonumber
\end{align}
For every $p_0 > p_c(d)$, for every $\epsilon>0$, there exists $\delta (\epsilon)>0$ and $p_2 (\epsilon)>0$ such that for every $x\in\mathbb{S}^1$, for every $p_0 \leq p < q $ satisfying $q-p<\delta$, we have
$$\pr \left[ b_q (\tilde{0}^{\mathcal{C}_q},\widetilde{nx}^{\mathcal{C}_q}) > b_p (\tilde{0}^{\mathcal{C}_p},\widetilde{nx}^{\mathcal{C}_p}) + 3 \epsilon \alpha' n \right] \,\leq\, 1-p_2 (\epsilon)\,,$$
thus for every $p_0 > p_c(d)$, for every $\epsilon>0$, there exists $\delta (\epsilon)>0$ such that for every $x\in\mathbb{S}^1$, for every $p_0 \leq p < q $ satisfying $q-p<\delta$, we have
 \[\beta_{q}(x)<\beta_p(x)+3\alpha' \epsilon.\]

{\em Step (ii).}
Given a $q$-open path $\gamma$, $\gamma$ may not be $p$-open. Thus we use the results of Section \ref{secrenorm} to modify the path to a $p$-open path which does not gain too many extra right-boundary edges. We mimic the proof of Lemma \ref{LEMetape2prime}. Fix $\epsilon>0$. Choose $\alpha (\epsilon)$ and $\mathfrak p (\epsilon)$ as in \eqref{choixalpha} and \eqref{choixp}.
\begin{itemize}
\item If $p$ is fixed and we let $q$ goes to $p$, we choose $p_0 = p$, $q_1\in (p_0, p_0 + \delta_1 (p))$ as defined in Lemma \ref{couplage}, and we consider only values of $q$ such that $q\in [p_0, q_1]$. Then we choose $N = N'(p_0, q_1, \mathfrak p (\epsilon))$ as given in Lemma \ref{couplage}.\\
\item If $q$ is fixed and we let $p$ goes to $q$, we choose $p_0 \in ( p_c(d), q]$ such that $q-p_0 \leq \delta_0 (q)$ as defined in Lemma \ref{couplage}, and we consider only values of $p$ in the interval $[p_0, q]$. Then we choose $N = N(p_0, q, \mathfrak p (\epsilon))$ as given in Lemma \ref{couplage}.
\end{itemize}
Let $x\in \mathbb{S}^1$, we denote by $\lfloor nx \rfloor$ the point $y$ of $\ZZ^d$ which  minimizes $\|nx-y\|_1$ (with a deterministic rule to break ties). Let $F'$ be the following good event: the $N$-boxes containing $0$ and $\lfloor nx \rfloor$ and all the adjacent boxes are good and belong to an infinite cluster of good boxes. By the FKG inequality and the stochastic comparison, we have
\begin{equation}
\label{eqnewbis1}
 \PP (F') \,\geq \theta_{\textrm{site},\mathfrak p(\epsilon) }^{18} \,.
 \end{equation}
Fix $\alpha '' = 6 \beta / \alpha = 2 \alpha'$ as defined in step $(i)$. We have
\begin{align}
\label{eqnewbis2}
& \pr \left[ b_p (\tilde{0}^{\mathcal{C}_{p_0}},\widetilde{\lfloor nx\rfloor}^{\mathcal{C}_{p_0}}) > b_q (\tilde{0}^{\mathcal{C}_{p_0}},\widetilde{\lfloor nx\rfloor}^{\mathcal{C}_{p_0}}) + 12 \alpha'' \tilde{C}_d \rho_d \epsilon n \right]\nonumber \\
& \,\leq \, \PP[F^c] + \sum_{y,z} \PP\left( E_{y,z} \cap \{  b_p (y,z) > b_q (y,z) + 12 \alpha'' \tilde{C}_d \rho_d \epsilon n\} \right)\,,
\end{align}
where the sum is over every $y$ in the same $(3N)$-box as $0$ and $z$ in the same $(3N)$-box as $\lfloor nx\rfloor$, and $E_{y,z}$ is the event that $y\in\C_{p_0}, z\in \C_{p_0}$ and the $N$-boxes containing $y$ and $z$ are good and belong to an infinite cluster of good boxes. For any such $(y,z)$, on $E_{y,z}$, let $\gamma_{y,z} \in \mathcal R (y,z)$ be a $q$-open right-most path from $y$ to $z$ such that $b_q (y,z) = \mathbf{b}_q (\gamma_{y,z})$, and let $\Gamma_{y,z}$ be the lattice animal of $N$-boxes it visits. For short, we write $\gamma$ for $\gamma_{y,z}$. As previously we define
\begin{align*}
\gamma_a & =\{e\in\gamma: e\text{ is }p\textrm{-open}\}\\
\gamma_b & =\{e\in\gamma: e\text{ is }p\textrm{-closed}\}\,.
\end{align*}
By Lemma \ref{LEM:modif}, on the event $E_{y,z}$, there exists a path $\gamma'$  with the following properties: 
\begin{enumerate}
\item $\gamma'$ is a path from $y$ to $z$ which is $p$-open;
\item $\gamma'\setminus \gamma$ is a collection of simple paths (and also right-most) that intersect $\gamma' \cap \gamma$ only at their endpoints thus $\gamma'$ is a right-most path (see \cite[Lemma 2.6]{biskup2012isoperimetry});
\item $\Card{\gamma' \setminus \gamma} \,\leq \, \rho_d \left(  N^d \Card{\gamma_b} + N \sum_{ C \in \Bad: \; C\cap \Gamma \ne \varnothing} \Card{  C}
\right) \,.$
\end{enumerate}
Note that 
\begin{equation*}
\mathbf{b}_{p}(\gamma')\le \mathbf{b}_{q}(\gamma')\le \mathbf{b}_{q}(\gamma)+3|\gamma'\setminus\gamma|
.\end{equation*}
Moreover, since a simple path is also a right-most path, we have for all $y,z\in \C_{p_0}$,
\begin{equation}
\label{eqnewbis3}
b_q (y,z) \,\leq \, 3 D^{\C_{p_0}} (y,z) \,.
\end{equation}
Using Equation \eqref{eqnewbis3}, Proposition \ref{propbiskup}, Antal and Pizstora's estimate \eqref{EQ:AP}, Cram{\'e}r's theorem again and Lemma \ref{lem:new}, for all $x\in \SS^1$ and for all $n$ large enough (in particular such that $\|y-z\|_1 \leq \|\lfloor nx \rfloor \| + 12 N \leq 2n $ and $\|y-z\|_1 \geq \|\lfloor nx \rfloor \| - 12 N \geq n/2 $), we have
\begin{align}
\label{eqnewbis4}
&\PP\left( E_{y,z} \cap \{  b_p (y,z) > b_q (y,z) + 12 \alpha'' \tilde{C}_d \rho_d \epsilon n\} \right)\nonumber \\
& \quad \,\leq\, \pr [E_{y,z} \cap \{ b_q (y,z) > 6 \beta n\}  ]\nonumber \\
& \quad \quad +  \pr \left[  E_{y,z}\cap \{ \exists \gamma \in \mathcal R(y,z) \,:\, |\gamma| > \alpha'' n \,,\,\, \mathbf{b}_{p_0} (\gamma)\leq \mathbf{b}_{q} (\gamma) \leq 6\beta  n \} \right] \nonumber \\
& \quad \quad +  \pr \left[  E_{y,z} \cap \{ \exists \gamma \in \mathcal R(y,z) \,:\, \gamma \textrm{ is }q\textrm{-open}, |\gamma| \leq \alpha'' n \,,\,\, |\gamma'\setminus\gamma| > 4 \alpha'' \tilde{C}_d \rho_d \epsilon n \}  \right] \nonumber\\
& \quad \,\leq \,  \PP \left( \beta \|y-z\|_1 \leq 2 \beta n \leq D^{\C_{p_0}} (y,z) < \infty \right) + C e^{-C' \alpha'' n} \nonumber\\
& \quad \quad +  \pr \left[  \exists \gamma  \,:\, \gamma \textrm{ starts at $y$, $\gamma$ is $q$-open}, |\gamma| \leq \alpha'' n \,,\,\, |\gamma_b| > \frac{2 \alpha'' \tilde{C}_d \epsilon n }{N^d} \right]  \nonumber\\
& \quad \quad + \pr \left[   \exists \Gamma \in \Animals, \;  
|\Gamma| \ge \frac{n}{N} , \; \sum_{ C \in \Bad: \; C \cap \Gamma \ne \varnothing}  |C| \ge \varepsilon \Card{\Gamma}
\right] \nonumber\\
& \quad \,\leq \,  \hat A e^{- \hat B \|y-z\|_1} + C e^{-C' \alpha'' n} \nonumber\\
& \quad \quad + 4^{\alpha n} e^{-3\alpha'' n\left(\frac{2 \tilde C_d \epsilon}{N^d} \log{\frac{2 \tilde C_d \epsilon}{\delta N^d}}+(1-\frac {2 \tilde C_d \epsilon }{N^d} )\log{\frac{1- 2 \tilde C_d \epsilon/N^d}{1-\delta}}\right)} + \mathcal P (n) \nonumber \\
& \quad \,\leq \,  \hat A e^{- \hat B n/2} + C e^{-C' \alpha'' n}  \nonumber\\
& \quad \quad + 4^{\alpha n} e^{-3\alpha'' n\left(\frac{2 \tilde C_d \epsilon}{N^d} \log{\frac{2 \tilde C_d \epsilon}{\delta N^d}}+(1-\frac {2 \tilde C_d \epsilon }{N^d} )\log{\frac{1- 2 \tilde C_d \epsilon/N^d}{1-\delta}}\right)} + 2^{-\frac{n}{N} +1}\,. 
\end{align}
Combining Equations \eqref{eqnewbis1}, \eqref{eqnewbis2} and \eqref{eqnewbis4}, we conclude that for every fixed $\epsilon >0$ and every fixed $p>p_c(d)$ (thus $p_0, q_1$ and $N$ are fixed), there exists $\delta (\epsilon, p) \in (0,q_1 - p]$ and $p_3 (\epsilon, p) >0$ such that for every $ q>p  $ satisfying $q-p<\delta$, for every $x\in\mathbb{S}^1$, for every $n$ large enough, we have
$$\pr  \left[ b_p (\tilde{0}^{\mathcal{C}_p},\widetilde{nx}^{\mathcal{C}_p}) > b_q (\tilde{0}^{\mathcal{C}_q},\widetilde{nx}^{\mathcal{C}_q}) + 12 \alpha'' \tilde{C}_d \rho_d \epsilon n \right] \,\leq \, 1-p_3  \,, $$
thus for every $\epsilon>0$ and for every $p >p_c(d)$, there exists $\delta (\epsilon, p) >0$ such that for every $ q>p $ satisfying $q-p<\delta$, for every $x\in\mathbb{S}^1$,
\[\beta_{p}(x)<\beta_{q}(x)+12 \alpha'' \tilde{C}_d \rho_d \epsilon.\] 
Similarly, for every fixed $\epsilon >0$ and every fixed $q>p_c(d)$ (thus $p_0$ and $N$ are fixed), there exists $\delta' (\epsilon, q) \in (0,q - p_0]$ and $p_4 (\epsilon, q) >0$ such that for every $p<q$ satisfying $q-p<\delta'$, for every $x\in\mathbb{S}^1$, for every $n$ large enough, we have
$$\pr  \left[ b_p (\tilde{0}^{\mathcal{C}_p},\widetilde{nx}^{\mathcal{C}_p}) > b_q (\tilde{0}^{\mathcal{C}_q},\widetilde{nx}^{\mathcal{C}_q}) + 12 \alpha'' \tilde{C}_d \rho_d \epsilon n \right] \,\leq \, 1-p_4  \,, $$
thus for every $\epsilon>0$ and for every $q >p_c(d)$, there exists $\delta' (\epsilon, q) >0$ such that for every $p< q$ satisfying $q-p<\delta$, for every $x\in\mathbb{S}^1$,
\[\beta_{p}(x)<\beta_{q}(x)+12 \alpha'' \tilde{C}_d \rho_d \epsilon.\] 
This ends the proof of Lemma \ref{lem:normcont}.
\end{proof}

\begin{proof}[Proof of Theorem \ref{thm:isocnt}] $\;$\\
\textit{(i) Proof of the continuity of the Cheeger constant}  $\lim_{n\rightarrow\infty}n\varphi_n (p)$. Let $p>p_c(2)$. For any rectifiable Jordan curve $\lambda$, with Leb$(int(\lambda))=1$,
\[
\text{len}_{\beta_p}(\lambda)=\sup_{N\ge 1} \quad \sup_{0\le t_0<\ldots<t_N\le 1}\sum_{i=1}^N\beta_p\left(\frac{\lambda(t_i)-\lambda(t_{i-1})}{\|\lambda(t_i)-\lambda(t_{i-1})\|_2}\right)\|\lambda(t_i)-\lambda(t_{i-1})\|_2.
\]
By Lemma \ref{lem:normcont} for every $\epsilon>0$ there exists a $\delta>0$ such that for every $q>p_c(2)$ satisfying $|p-q|<\delta$ we have $\sup_{x\in\mathbb{S}^1}|\beta_{q}(x)-\beta_p(x)|<\epsilon$, thus
\begin{equation}
\label{eqajout6}
|\text{len}_{\beta_p}(\lambda)-\text{len}_{\beta_{q}}(\lambda)| \leq \epsilon \text{len}_{\|\cdot\|_2}(\lambda)
.
\end{equation}
The infimum in Theorem \ref{thm:cheegervariational} is achieved (by compactess of the set of Lipschitz curves), so let us denote by $\lambda_p$ (resp. $\lambda_q$) a Jordan curve such that Leb$(int(\lambda_p))=1$ and $\text{len}_{\beta_p}(\lambda_p) = \sqrt{2} \theta_p \lim_{n \rightarrow \infty} n \varphi_n(p)$ (resp. Leb$(int(\lambda_q))=1$ and $\text{len}_{\beta_q}(\lambda_q) = \sqrt{2} \theta_q \lim_{n \rightarrow \infty} n \varphi_n(q)$). All norms in $\mathbb{R}^2$ are equivalent thus we know that $\text{len}_{\|\cdot\|_2}(\lambda_p)<\infty$ and $\text{len}_{\|\cdot\|_2}(\lambda_q)<\infty$. From \eqref{eqajout6} we deduce that for every $\epsilon>0$ there exists $\delta>0$ such that if $|p-q|<\delta$ then
\begin{align}
\sqrt{2} \theta_p \lim_{n \rightarrow \infty} n \varphi_n(p) & \,= \, \text{len}_{\beta_p}(\lambda_p) 
  \,\geq \, \text{len}_{\beta_q}(\lambda_p) - \epsilon \text{len}_{\|\cdot\|_2}(\lambda_p) \nonumber\\
& \,\geq \,  \sqrt{2} \theta_q \lim_{n \rightarrow \infty} n \varphi_n(q) - \epsilon \text{len}_{\|\cdot\|_2}(\lambda_p) \label{eqajout7} \\
\text{and } \quad \quad \sqrt{2} \theta_p \lim_{n \rightarrow \infty} n \varphi_n(p) &  \,\leq \, \text{len}_{\beta_p}(\lambda_q)
  \,\leq \, \text{len}_{\beta_q}(\lambda_q) + \epsilon \text{len}_{\|\cdot\|_2}(\lambda_q) \nonumber\\
& \,\leq \,  \sqrt{2} \theta_q \lim_{n \rightarrow \infty} n \varphi_n(q) + \epsilon \text{len}_{\|\cdot\|_2}(\lambda_q) \,.\label{eqajout8}
\end{align}
Let $\beta_q^{\min} = \inf_{x\in \SS^{1}} \beta_q (x)$, for all $q$. By Lemma \ref{lem:normcont} again we know that for every $q$ satisfying $|p-q|<\delta$ we have $\beta_q^{\min} \geq\beta_p^{\min} - \epsilon  $, which is positive for $\epsilon $ small enough ($\beta_p^{\min} $ is not zero since $\beta_p$ is a norm), thus
$$\text{len}_{\|\cdot\|_2}(\lambda_q) \,\leq \, \frac{ \text{len}_{\beta_q}(\lambda_q) }{\beta_q^{\min}}  \,\leq \, \frac{ \text{len}_{\beta_q}(\lambda_q) }{\beta_p^{\min}-\epsilon} \,.$$
Thanks to Equation (\ref{eqajout8}) we obtain
\begin{equation}
\label{eqajout9}
\sqrt{2} \theta_p \lim_{n \rightarrow \infty} n \varphi_n(p) \,\leq \,\sqrt{2} \theta_q  \lim_{n \rightarrow \infty} n \varphi_n(q) \left( 1 + \frac{\epsilon }{\beta_p^{\min}-\epsilon} \right) \,.
\end{equation}
Combining (\ref{eqajout7}) and (\ref{eqajout9}) we obtain that 
$$\lim_{q\rightarrow p}\sqrt{2} \theta_q \lim_{n \rightarrow \infty} n \varphi_n(q) \,=\, \sqrt{2} \theta_p\lim_{n \rightarrow \infty} n \varphi_n(p) \,.$$
Since $p\mapsto \theta_p$ is continuous on $(p_c(2), 1]$, this conludes the first part of the proof.

\medskip
\noindent
\textit{(ii) Proof of the continuity of the Wulff shape.} 
Next we prove that $p\mapsto \widehat{W}_p$ is continuous for the Hausdorff distance. Fix $\eta>0$ and $p>p_c(2)$ and let $\epsilon = \epsilon (\eta, p) >0$ be small enough such that 
\begin{equation}
\label{eqajout11}
\epsilon\,\leq  \,\frac{\beta_{p}^{\min}}{2} \min \left(\eta , 1\right)\,.
\end{equation}
As previously let $\delta >0$ satisfy $\sup_{x\in\mathbb{S}^1}|\beta_{q}(x)-\beta_p(x)|<\epsilon$ for all $q>p_c(2)$ such that $|p-q|< \delta$. For every $x\in W_q$ we have by definition of $W_q$ that for every $\hat{n}\in\mathbb{S}^1$, $\hat{n}\cdot x\le\beta_q(\hat{n})$. Thus for all $q>p_c(2)$ such that $|p-q|< \delta $,
$$\hat{n}\cdot x\le \beta_q(\hat{n})\le\beta_{p}(\hat{n})+\epsilon\le (1+\eta)\beta_{p}(\hat{n}),$$
where the last inequality comes from (\ref{eqajout11}), thus $x\in (1+\eta)W_{p}$. We obtain that for all $p>p_c(2)$, for all $\eta >0$, there exists $\delta >0$ such that for every $q>p_c(2)$ satisfying $|p-q|< \delta $,
\begin{equation}
\label{eqajout10}
W_q\subset (1+\eta)W_{p}.
\end{equation}
For every $q>p_c(2)$ satisfying $|p-q|< \delta $, we also have $\beta_q^{\min} \geq \beta_p^{\min} - \epsilon \geq \beta_p^{\min} /2 \geq  \epsilon / \eta$ by (\ref{eqajout11}), thus by the same method we obtain that for every $x\in W_p$, for every $\hat{n} \in \SS^1$,
$$\hat{n}\cdot x\le \beta_p(\hat{n})\le\beta_{q}(\hat{n})+\epsilon\le (1+\eta)\beta_{q}(\hat{n}),$$
thus 
\begin{equation}
\label{eqajout12}
W_p\subset (1+\eta)W_{q}.
\end{equation}
For every $x \in W_p$, $\|x\|_2 = x \cdot x/\|x\|_2 \leq \beta_p(x) \leq \beta_p^{\max} $, where $\beta_p^{\max} = \sup_{x\in \SS^1} \beta_p(x) <\infty$, thus $\|(1+\eta)x-x\|_2 \leq\eta\beta_p^{\max} $. Similarly, for all $q>p_c(2)$ satisfying $|p-q|< \delta $, $\|x\|_2 \leq \beta_q^{\max} \leq 2\beta_p^{\max} $ and $\|(1+\eta)x-x\|_2 \leq 2 \eta \beta_p^{\max} $. With (\ref{eqajout10}) and (\ref{eqajout12}), we conclude that for every $p>p_c(2)$, for every $\eta >0$, there exists $\delta >0$ such that for every $q>p_c(2)$ satisfying $|p-q|< \delta $, 
$$ d_H (W_p,W_{q} ) \,\leq 2 \eta \beta_p^{\max}  \,,$$
thus $\lim_{q\rightarrow p} d_H (W_p,W_{q} ) = 0$. This implies that $\lim_{q\rightarrow p}\text{Leb}(W_q) = \text{Leb}(W_p)$, and since $\widehat{W}_p=\frac{W_p}{\sqrt{\text{Leb}(W_p)}}$ we deduce from (\ref{eqajout10}) and (\ref{eqajout12}) by a similar argument that $\lim_{q\rightarrow p} d_H (\widehat{W}_p,\widehat{W}_{q} )=0 $. This concludes the proof of Theorem \ref{thm:isocnt}.
\end{proof}

\begin{rem}
To deduce the continuity of the Wulff crystal from Lemma \ref{lem:normcont}, we can also consider a more general setting. Consider $\beta_p^*$ the dual norm of $\beta_p$, defined by
$$ \forall x \in \RR^d \,, \quad \beta_p^* (x) \,=\, \sup \{ x\cdot y \,:\, \beta_p (y) \leq 1 \} \,.$$
Then $\beta_p^*$ is a norm, and what we did is equivalent to deduce from Lemma \ref{lem:normcont} the same result concerning $\beta_p^*$ :
\begin{equation}
\label{eqremarque}
\lim_{q\rightarrow p} \sup_{x\in \SS^1} |\beta_q^*(x) - \beta_p^*(x)| \,=\, 0 \,.  
\end{equation}
Notice that $W_p$, the Wulff crystal associated to $\beta_b$, is in fact the unit ball associated to $\beta_b^*$, then (\ref{eqremarque}) implies the continuity of $p\mapsto W_p$ according to the Hausdorff distance.
\end{rem}


\bigskip
\noindent

\def\refname{References}
\bibliographystyle{plain}
\bibliography{supercontinuite3}

\end{document}